\newtheorem{theorem}{Theorem}[section]
\newtheorem{lemma}[theorem]{Lemma}
\newtheorem{propos}[theorem]{Proposition}
\newtheorem{corol}[theorem]{Corollary}
\theoremstyle{definition}
\newtheorem{definition}[theorem]{Definition}
\newtheorem{remark}[theorem]{Remark}
\numberwithin{equation}{section}
\newcommand\proj{\mathscr{P}}
\newcommand{\eps}{{\varepsilon}}
\newcommand{\mint}{\hspace{0.1cm}-\hspace{-0.45cm}\int}
\newcommand{\Lip}{{\rm {Lip}}}
\newcommand{\dist}{{\rm {dist}}}
\newcommand{\dv}{{\text {div}}}
\newcommand\weak{{\rightharpoonup}\,}
\newcommand\supp{{\rm supp}\,}
\newcommand\res{\mathop{\hbox{\vrule height 7pt width .3pt depth 0pt
\vrule height .3pt width 5pt depth 0pt}}\nolimits}
\newcommand\Id{{\rm Id}\,}
\def\R#1{{\mathbb R}^{#1}}
\newcommand\Z{{\mathbb Z}}
\newcommand\N{{\mathbb N}}
\newcommand{\mass}{{\mathbf{M}}}
\def\a#1{\left\llbracket{#1}\right\rrbracket}
\newcommand{\norm}[2]{\left\|#1\right\|_{#2}}
\newcommand{\ra}{\right\rangle}
\newcommand{\la}{\left\langle}
\newcommand{\de}{\partial}
\newcommand{\ph}{\varphi}
\newcommand{\n}{\notag}
\newcommand{\E}{\textup{Ex}}
\renewcommand{\l}{\left}
\renewcommand{\r}{\right}
\newcommand{\cyl}{{\mathcal{C}}}
\newcommand{\gr}{\textup{graph}}
\begin{document}

\title{Center manifold: a case study}

\author[C.~De Lellis]{Camillo De Lellis}
\address{Z\"urich University}
\email{camillo.delellis@math.uzh.ch}
\author[E.~N.~Spadaro]{Emanuele Nunzio Spadaro}
\address{Hausdorff Center for Mathematics, Universit\"at Bonn}
\email{emanuele.spadaro@hcm.uni-bonn.de}

\maketitle

\begin{abstract}
Following Almgren's construction of the \textit{center manifold} in
his Big regularity paper, we show the $C^{3,\alpha}$ regularity of area-minimizing
currents in the neighborhood of points of density one
without using the nonparametric theory.
This study is intended as a first step towards the understanding of Almgren's 
construction in its full generality.
\end{abstract}

\section{Introduction}
In this note we consider area-minimizing integral currents $T$ of dimension $m$ in $\R{m+n}$.
The following theorem is the cornerstone of the regularity theory.
It was proved for the first time by De Giorgi \cite{DG} for $n=1$ and then extended later by several authors (the constant $\omega_m$ denotes, as usual, the Lebesgue measure
of the $m$-dimensional unit ball).

\begin{theorem}\label{t:degiorgi}
There exist constants $\eps,\beta>0$ such that, if $T$ is an area-minimizing integral current
and $p$ is a point in its support such that $\theta (T, p) =1$, $\supp (\partial
T)\cap B_r (p)=\emptyset$ and $\|T\| (B_r (p))\leq (\omega_m +\eps)\,r^m$,
then $\supp(T)\cap B_{r/2} (p)$ is the graph of a $C^{1,\beta}$ function $f$.
\end{theorem}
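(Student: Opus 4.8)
The plan is to run the classical De Giorgi-type blow-up scheme. The only genuinely new feature compared with the hypersurface case is the higher codimension, but it is harmless here: the hypothesis $\theta(T,p)=1$ forces multiplicity one, so the object to be reconstructed is a single-valued graph.

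\emph{Propagation of the smallness hypotheses.} By the monotonicity formula the density ratio $s\mapsto \|T\|(B_s(p))/(\omega_m s^m)$ is non-decreasing; hence the hypothesis together with $\theta(T,p)=1$ forces this ratio to lie in $[1,1+\eps]$ for every $s\le r$. Comparing concentric balls, for every $q\in\supp(T)\cap B_{r/2}(p)$ the ratio $\|T\|(B_s(q))/(\omega_m s^m)$ is still $\le 1+C\eps$ for $s\le r/4$, while it is $\ge 1$ because $q\in\supp(T)$. A first compactness argument then shows that the (cylindrical or spherical $L^2$) excess $\E(T,B_s(q))$ is as small as we wish, say $\le\eps_1$, provided $\eps$ is small enough: otherwise one extracts a sequence of area-minimizing currents with density ratio tending to $1$ at the centre, which by the rigidity case of the monotonicity formula converges to a multiplicity-one $m$-plane, contradicting lower semicontinuity of the excess.

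\emph{The excess-decay estimate.} The core is the statement that there are $\eta\in(0,1)$, $\mu\in(0,1)$ and $\eps_1>0$ such that $\E(T,B_\rho(q))\le\eps_1$ implies $\E(T,B_{\eta\rho}(q))\le \eta^{2\mu}\,\E(T,B_\rho(q))$, after re-tilting the reference plane. I would prove this by harmonic approximation. When the excess in $B_\rho(q)$ is small, the Lipschitz approximation theorem represents $T$, outside a \emph{bad set} whose measure is controlled by the excess, as the graph of a Lipschitz function $f$ over the best plane, with small Lipschitz constant. Using the minimality of $T$ — testing against competitors obtained by grafting graphs of harmonic functions and estimating the change of mass via the Taylor expansion of the area functional — one shows that, on a slightly smaller ball, $f$ is $L^2$-close to a harmonic function $h$, with $\|f-h\|_{L^2}^2$ plus the contribution of the bad set bounded by $o(1)\,\E(T,B_\rho(q))$ as the excess vanishes. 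Interior estimates for $h$ give decay of the mean value of $|Dh-(Dh)_{\eta\rho}|^2$ by any prescribed power of $\eta$ once the affine part is frozen; transporting this back to $f$ and then to $T$ yields the claimed decay.

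\emph{Iteration and conclusion.} Iterating the excess-decay estimate at a fixed $q\in\supp(T)\cap B_{r/2}(p)$ over all dyadic scales $\le r/4$ gives $\E(T,B_\rho(q))\le C(\rho/r)^{2\mu}\E(T,B_{r/4}(q))$, and performing this uniformly in $q$ produces a Campanato-type decay of the excess on $\supp(T)\cap B_{r/2}(p)$. Standard arguments then show that the approximating planes $\pi_{q,\rho}$ converge as $\rho\to0$ to a plane $\pi_q$ which is Hölder continuous in $q$ with exponent $\beta=\mu$, and that — invoking the Lipschitz approximation at each scale, whose Lipschitz constant now decays like $(\rho/r)^\mu$ — $\supp(T)\cap B_{r/2}(p)$ is the graph of a $C^{1,\beta}$ function $f$ over $\pi_p$. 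The step I expect to be the main obstacle is the harmonic approximation: one must quantify how far the minimality of $T$ is from the Euler--Lagrange equation of the Dirichlet energy, which requires simultaneously controlling the bad set of the Lipschitz approximation and, in higher codimension, working without a maximum principle. At density one the single-valued Lipschitz approximation is available, so the argument runs essentially parallel to the codimension-one case, and the delicate point is the bookkeeping of the error terms.
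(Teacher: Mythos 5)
Your proposal follows essentially the same route as the paper's appendix: De Giorgi's scheme of propagating smallness via monotonicity and compactness, proving excess decay at a fixed scale through the Lipschitz approximation combined with harmonic comparison (the paper carries out this comparison by a compactness/contradiction argument on rescaled approximations, which is a standard variant of the quantitative harmonic approximation you sketch), and then iterating over dyadic scales to get Campanato-type decay, H\"older continuity of the tangent planes, and graphicality. The outline is correct and no genuinely different idea is involved.
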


Once established this $\eps$-regularity result, the regularity theory proceeds further by deriving
the usual Euler--Lagrange equations for the function $f$.
Indeed, it turns out that $f$ solves a system of elliptic partial differential equations
and the Schauder theory then implies that $f$ is smooth
(in fact analytic, using the classical result by Hopf \cite{Hopf}).

In his Big regularity paper \cite{Alm}, Almgren observes that
an intermediate regularity result can be derived as
a consequence of a more complicated construction
without using the nonparametric PDE theory of minimal surfaces
(i.e.~without deriving the 
Euler-Lagrange equation for the graph of $f$).
Indeed, given a minimizing current $T$ and a point $p$ with $\theta (T, p)= Q\in
\N$,
under the hypothesis that the excess is sufficiently small,
Almgren succeeds in constructing a $C^{3,\alpha}$ regular surface
(called {\em center manifold}) which, roughly speaking, approximates
the ``average of the sheets of the current'' (we refer to \cite{Alm} for further details).
In the introduction of \cite{Alm} it is observed that,
in the case $Q=1$, the center manifold coincides with
the current itself, thus implying directly the $C^{3,\alpha}$ regularity.

The aim of the present note is to give a simple direct proof of this remark, 
essentially following Almgren's
strategy for the construction of the center manifold in the simplified setting $Q=1$.
At this point the following comment is in order: the excess decay leading to Theorem~\ref{t:degiorgi}
remains anyway a fundamental step in the proof of this paper (see Proposition~\ref{p:degiorgi_improved} below) and, as far as we understand,
of Almgren's approach as well.
One can take advantage of the information contained in
Theorem~\ref{t:degiorgi} at several levels but 
we have decided to keep its use to the minimum. 

\section{Preliminaries}

\subsection{Some notation}\label{s:notation}
From now on we assume, without loss of generality, that $T$ is an area-minimizing
integer rectifiable current in $\R{m+n}$ satisfying the following assumptions:
\begin{equation*}
\hspace{2.5cm} \partial T = 0\; \text{in}\; B_1(0),
\quad \theta (T, 0)=1
\quad\text{and}\quad \|T\| (B_1)\leq \omega_m +\eps,
\hspace{2.5cm}
\text{(H)}
\end{equation*}
with the small constant $\eps$ to be specified later.

In what follows, $B^m_r (q)$, $B^n_r (u)$ and $B^{m+n}_r (p)$ denote the
open balls contained,
respectively, in the Euclidean spaces $\R{m}$, $\R{n}$ and $\R{m+n}$.
Given a $m$-dimensional plane $\pi$, $\cyl^{\pi}_r (q)$ denotes the cylinder
$B^m_r (q)\times \pi^\perp \subset \pi\times \pi^\perp = \R{m+n}$ and
$\proj^\pi:\pi\times\pi^\perp\to\pi$ the corresponding orthogonal projection.
Central points, supscripts and subscripts will be often omitted when
they are clear from the context.

We will consider different systems of cartesian coordinates in $\R{m+n}$.
A corollary of De Giorgi's excess decay theorem (a variant of which is precisely
stated in Proposition \ref{p:degiorgi_improved} below) is that, when $\eps$ is sufficiently small,
the current has a unique tangent plane at the origin (see Corollary \ref{c:decay_everywhere}). Thus, immediately after
the statement of Corollary \ref{c:decay_everywhere}, the most important
system of coordinates, denoted by $x$, will be fixed once and for all in such a way that
$\pi_0=\{x_{m+1} = \ldots = x_{m+n} = 0\}$ is the tangent plane to $T$ at $0$.
Other systems of coordinates will be denoted by $x'$, $y$ or $y'$.
We will always consider positively oriented systems $x'$, i.e.~such that
there is a unique element $A\in SO (m+n)$ with $x'(p) = A \cdot x(p)$ for every point $p$.
An important role in each system of coordinates will be played by the oriented $m$-dimensional
plane $\pi$ where the last $n$ coordinates vanish (and by its orthogonal complement $\pi^\perp$).
Obviously, given $\pi$ there are several systems of coordinates $y$ for which $\pi = \{y_{m+1}= \ldots 
= y_{m+n}=0\}$. However, when we want to stress the relation between $y$ and $\pi$ we will
use the notation $y_\pi$.

\subsection{Lipschitz approximation of minimal currents}
The following approximation theorem can be found in several accounts of the regularity theory
for area-minimizing currents.
It can also be seen as a special case of a much more general result due to Almgren
(see the third chapter of \cite{Alm}) and reproved in a simpler way in
\cite{DLSp2}. As it is customary the (rescaled) 
cylindrical excess is given by the formula
\begin{equation}\label{e:cyl_ex}
\E (T, \mathcal{C}^\pi_r) := \frac{\|T\| (\mathcal{C}_r^\pi) - \omega_m
r^m}{\omega_m\,r^m} 
= \frac{1}{2\,\omega_m\,r^m} \int_{\mathcal{C}_r^\pi} |\vec{T}- \vec{\pi}|^2\,
d\|T\|, 
\end{equation}
(where $\vec{\pi}$ is the unit simple vector orienting $\pi$
and the last equality in \eqref{e:cyl_ex} holds when we assume
$\proj_\sharp (T\res \mathcal{C}_r^\pi) = \a{B_r (p}$).

\begin{propos}\label{p:approx}
There are constants $C>0$ and $0<\eta,\eps_1<1$ with the following property.
Let $r>0$ and $T$ be an area-minimizing integer rectifiable $m$-current in $\cyl^\pi_{r}$ such that
\[
\de \, T=0,\quad 
\proj^\pi_\#(T)=\a{B^m_r}\quad\text{and}\quad
E := \E (T, \cyl^\pi_{r})\leq \eps_1.
\]
Then, for $s=r(1-C E^\eta)$, there exists a Lipschitz function $f: B_{s} \to \R{n}$
and a closed set $K\subset B_{s}$ such that:
\begin{subequations}
\begin{gather}
\Lip (f) \leq C E^\eta;\label{e:approx1}\\
|B_{s}\setminus K|\leq C\,r^m\, E^{1+\eta} \quad\text{and}\quad
\gr (f|_K) = T \res (K\times \R{n});\label{e:approx2}\\
\left|\|T\|(\cyl_{s}) -\omega_m\,s^m - \int_{B_{s}} \frac{|Df|^2}{2}\right|
\leq C \,r^m\,E^{1+\eta}.\label{e:approx3}
\end{gather}
\end{subequations}
\end{propos}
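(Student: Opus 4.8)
The plan is to run the classical De Giorgi--Almgren scheme: via a maximal function argument single out a ``good set'' $K$ on which minimality forces $T$ to coincide with a Lipschitz graph, extend the graph function to all of $B^m_s$, and then control quantitatively the ``bad set'' and the energy defect. Introduce the excess measure $\mathbf e_T:=\tfrac12|\vec{T}-\vec{\pi}|^2\,\|T\|\res\cyl^\pi_r$ and its projection $\nu:=\proj^\pi_\#(\mathbf e_T)$ onto $B^m_r$, a finite measure with $\nu(B^m_r)=\omega_m r^m E$ by \eqref{e:cyl_ex} and the hypothesis $\proj^\pi_\#(T)=\a{B^m_r}$. Let $M\nu(x):=\sup_{0<\rho<r-|x|}\nu(B^m_\rho(x))/(\omega_m\rho^m)$ be the restricted maximal function, fix a threshold $t:=E^{2\eta}$ with $\eta$ small, set $s:=r(1-CE^\eta)$, and define the closed set $K:=\{x\in B^m_s:M\nu(x)\le t\}$.

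Since $\proj^\pi_\#(T)=\a{B^m_r}$, for a.e.\ $x$ the slice $\langle T,\proj^\pi,x\rangle$ is a single point $(x,f(x))$, and the task is to show that $f|_K$ is Lipschitz with $\Lip(f|_K)\le CE^\eta$ and that $\gr(f|_K)=T\res(K\times\R n)$. The key ingredient is a \emph{height bound}: for $x\in K$ the oscillation of $\supp T$ inside $\cyl^\pi_\rho(x)$ is at most $C\rho\,(M\nu(x))^{1/2}\le C\rho\,E^\eta$. This, together with a comparison argument excluding that $T$ can be ``steep'' over a segment $[x,x']$ with $x,x'\in K$ (since this would create excess incompatible with $M\nu\le t$ on a ball of radius $\sim|x-x'|$), yields the Lipschitz estimate; here minimality is used, via the monotonicity formula and $\proj^\pi_\#(T)=\a{B^m_r}$, to rule out that $T$ stores mass off the graph. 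A McShane--Kirszbraun extension then produces $f:B^m_s\to\R n$ with $\Lip(f)\le CE^\eta$, giving \eqref{e:approx1} and the graph identity in \eqref{e:approx2}.

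The elementary weak-$(1,1)$ bound only gives $|B^m_s\setminus K|\le\nu(B^m_r)/(\omega_m t)\le Cr^m E^{1-2\eta}$, which is too weak; the superlinear bound in \eqref{e:approx2} instead rests on the \emph{higher integrability of the excess density} --- a Gehring-type reverse-H\"older inequality for area-minimizers of the form $\int_{B^m_s}\mathbf d^{1+\gamma}\le Cr^m E^{1+\gamma}$ for some $\gamma>0$, where $\mathbf d(x):=\limsup_{\rho\to0}\nu(B^m_\rho(x))/(\omega_m\rho^m)$, whose proof in turn feeds on the excess-decay of Theorem~\ref{t:degiorgi} (Proposition~\ref{p:degiorgi_improved}). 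Combined with a truncated maximal-function inequality, this yields $|B^m_s\setminus K|\le Cr^m E^{1+\eta}$ and, by H\"older, $\nu(B^m_s\setminus K)\le Cr^m E^{1+\eta}$, once $\eta$ is chosen small relative to $\gamma$. This passage from linear to superlinear control is the main obstacle: the bare maximal-function estimates are only $O(E)$, so minimality must enter essentially through this reverse-H\"older property, which itself rests on the excess-decay input.

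For \eqref{e:approx3}, write $\cyl_s=(K\times\R n)\cup((B^m_s\setminus K)\times\R n)$. Over the good set the area formula gives $\|T\|(K\times\R n)=\int_K\sqrt{\det(\Id+Df^TDf)}=|K|+\int_K\tfrac12|Df|^2+O\big(\int_K|Df|^4\big)$, while, since $\proj^\pi$ and restriction to cylinders commute, $\|T\|((B^m_s\setminus K)\times\R n)=|B^m_s\setminus K|+\nu(B^m_s\setminus K)$. Adding these and using $|K|+|B^m_s\setminus K|=\omega_m s^m$, the quantity in \eqref{e:approx3} equals $-\int_{B^m_s\setminus K}\tfrac12|Df|^2+O\big(\int_K|Df|^4\big)+\nu(B^m_s\setminus K)$. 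Since $\int_{B^m_s\setminus K}|Df|^2\le CE^{2\eta}|B^m_s\setminus K|$, $\int_K|Df|^4\le CE^{2\eta}\int_{B^m_s}|Df|^2$, and the standard by-product $\int_{B^m_s}|Df|^2\le Cr^m E$ holds, all three terms are $\le Cr^m E^{1+\eta}$, proving \eqref{e:approx3}. The other delicate point throughout is obtaining the height and Lipschitz bounds over $K$ with the sharp dependence $CE^\eta$ rather than a crude $o(1)$, which is what ultimately makes the error in \eqref{e:approx3} genuinely superlinear in $E$.
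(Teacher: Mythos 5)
Your scheme reproduces the paper's first stage (maximal--function truncation of the projected excess measure, Lipschitz graph over the good set $K$, extension, and the linear bound $|B_s\setminus K|\leq C r^m E^{1-2\eta}$), although your route to the Lipschitz bound --- height bound plus a ``steepness'' comparison invoking monotonicity --- is left quite vague; the paper gets it without any use of minimality, via a BV estimate for the slice averages $\Phi_\psi(x)=\langle T_x,\psi\rangle$ controlled by the excess, and then a maximal--function argument on $|D\Phi_\psi|$. The genuine gap, however, is in the step you yourself flag as the crux: the upgrade from $E^{1-2\eta}$ to $E^{1+\eta}$. You outsource it to a Gehring-type reverse H\"older inequality for the excess density, which you do not prove, and which you moreover say ``feeds on the excess-decay of Theorem~\ref{t:degiorgi} (Proposition~\ref{p:degiorgi_improved}).'' Inside this paper that route is circular: Proposition~\ref{p:degiorgi_improved} is proved \emph{from} Proposition~\ref{p:approx} (the approximation enters the harmonic blow-up of Proposition~\ref{p:harm} and the decay of Proposition~\ref{p:basic iter}), so the superlinear estimate cannot be allowed to rest on the decay. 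As written, the central claim of the proposition is therefore not established.

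What the paper does instead is a direct comparison argument, self-contained and independent of any decay or higher integrability: starting from the weak approximation $h$, it selects by slicing a good radius $s$, mollifies $h$ at scale $rE^\gamma$ inside $B_{s-rE^\theta}$ and interpolates back to $h$ near $\partial B_s$, fills the boundary mismatch $(T-\gr(h))\res\partial B_s$ by an isoperimetric filling $R$, and uses $\gr(g)+R$ as a competitor. Comparing the resulting upper bound for $\|T\|(\cyl_s)$ with the lower bound coming from the Taylor expansion of the area over the good set yields
\begin{equation*}
e\big(T,(B_s\cap L)\times\R{n},\vec e_m\big)\;\leq\; C\,E^{1+\tau}\,r^m ,
\end{equation*}
i.e.\ the excess sitting over the bad set is itself superlinear. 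Rerunning the maximal truncation with a smaller exponent $\beta$ (so that $2\beta<\tau$) then gives both $|B_s\setminus K|\leq C r^m E^{1+\eta}$ and the mass-defect estimate \eqref{e:approx3}. If you want to salvage your outline, this comparison step (or an equally self-contained proof of the higher integrability not relying on the decay) is exactly what must replace the appeal to the Gehring inequality; your bookkeeping for \eqref{e:approx3}, which needs $\nu(B_s\setminus K)\leq Cr^mE^{1+\eta}$, would then go through essentially as you wrote it.
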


%
%
%
%

This proposition is a key step in the derivation of Theorem~\ref{t:degiorgi}.
In the appendix we include a short proof in the spirit of \cite{DLSp2}.
Clearly, Theorem~\ref{t:degiorgi} can be thought as a much finer version of this approximation.
However, an aspect which is crucial for further developments is that
several important estimates can be derived directly from Proposition~\ref{p:approx}.

\subsection{De Giorgi's excess decay}
The fundamental step in De Giorgi's proof of Theorem~\ref{t:degiorgi} is the 
decay of the quantity usually called ``spherical excess''
(where the minimum is taken over all oriented $m$--planes $\pi$):
\begin{equation*}
\E (T, B_r (p)) := \min_\pi \E (T, B_r (p), \pi),
\quad\text{with}\quad
\E (T, B_r (p), \pi) := \frac{1}{2} \mint_{B_r (p)} |\vec{T}-\vec{\pi}|^2 d\|T\|.
\end{equation*}

\begin{propos}\label{p:degiorgi_improved}
There is a dimensional constant $C$ with the following
property. For every $\delta,\eps_0>0$, there is $\eps>0$ such that, if (H) holds, then
$\E (T, B_1)\leq \eps^2_0$ and
\begin{equation}\label{e:decay_everywhere}
\E (T, B_r (p)) \leq C\, \eps^2_0\, r^{2-2\delta}
\quad \mbox{for every $r\leq 1/2$ and every $p\in B_{1/2}\cap \supp (T)$.}
\end{equation}
\end{propos}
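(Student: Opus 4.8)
The plan is to split the statement into the \emph{flatness bound} $\E(T,B_1)\le\eps_0^2$ and the \emph{decay estimate} \eqref{e:decay_everywhere}, the latter obtained from a one--scale improvement which is then iterated. For the flatness bound I would argue by contradiction and compactness: if it failed, there would be area--minimizing currents $T_k$ satisfying (H) with $\eps=\eps_k\to0$ but $\E(T_k,B_1)\ge\eps_0^2$; by the Federer--Fleming compactness theorem and the closure of area--minimality under weak convergence, a subsequence converges to an area--minimizing $T_\infty$ with $\partial T_\infty=0$ in $B_1$ and $\|T_k\|\to\|T_\infty\|$ as measures, whence $\|T_\infty\|(B_1)\le\omega_m$. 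The monotonicity formula for the $T_k$ gives $\|T_k\|(B_\rho)\ge\omega_m\rho^m$ for $\rho<1$, hence $\|T_\infty\|(B_\rho)\ge\omega_m\rho^m$ and $\theta(T_\infty,0)=1$; together with $\|T_\infty\|(B_1)\le\omega_m$ this forces equality in the monotonicity formula, so $T_\infty$ is a density--one cone, i.e.\ $T_\infty=\a{\pi}$ for an $m$--plane $\pi$. Since the masses converge, the convergence is as varifolds, so
\[
\E(T_k,B_1)\le\tfrac12\,\mint_{B_1}|\vec T_k-\vec\pi|^2\,d\|T_k\|\;\longrightarrow\;\tfrac12\,\mint_{B_1}|\vec T_\infty-\vec\pi|^2\,d\|T_\infty\|=0,
\]
a contradiction. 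A one--line comparison (optimal plane at $B_1(0)$, monotonicity lower bound $\|T\|(B_{1/2}(p))\ge\omega_m2^{-m}$, valid since $\theta(T,p)\ge1$ on $\supp(T)$) then upgrades this to $\E(T,B_{1/2}(p))\le C(m)\,\eps_0^2$ for every $p\in B_{1/2}\cap\supp(T)$.

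The core is a one--scale improvement: there are a dimensional $C_0$ and, for each $\vartheta\in(0,1/4)$, a threshold $\eps_2(\vartheta)>0$ and a modulus $\psi_\vartheta$ with $\psi_\vartheta(0^+)=0$, such that
\[
\E(T,B_r(p))\le\eps_2(\vartheta)\ \Longrightarrow\ \E\bigl(T,B_{\vartheta r}(p)\bigr)\le\bigl(C_0\vartheta^2+\psi_\vartheta(\E(T,B_r(p)))\bigr)\,\E(T,B_r(p)).
\]
To prove it, rescale so that $r=1$, $p=0$, and let $\pi$ realize $\E(T,B_1)$. Smallness of the spherical excess plus monotonicity imply, by the usual argument, that $\supp(T)$ stays well inside $\cyl^\pi_{1/2}$, that $\proj^\pi_\#(T\res\cyl^\pi_{1/2})=\a{B^m_{1/2}}$, and that $E:=\E(T,\cyl^\pi_{1/2})\le C\,\E(T,B_1)$; Proposition~\ref{p:approx} then produces a Lipschitz $f\colon B^m_s\to\R{n}$, with $s$ close to $1/2$, $\Lip(f)\le CE^\eta$, graph agreeing with $T$ off a set of measure $\le CE^{1+\eta}$, and Dirichlet energy comparable (up to $CE^{1+\eta}$) to $\E(T,B_1)$ via \eqref{e:approx3} and \eqref{e:cyl_ex}. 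The area--minimality of $T$ with \eqref{e:approx2}--\eqref{e:approx3} (and a good--slice argument to glue competitor graphs onto $T$) shows that $f$ minimizes the Dirichlet energy up to an error $\psi(E)\int|Df|^2$, so the classical harmonic approximation lemma gives a harmonic $h$ with $\int_{B^m_{1/4}}|Df-Dh|^2\le\psi(E)\int_{B^m_{1/2}}|Df|^2$. Interior estimates for harmonic functions give $\mint_{B^m_\vartheta}|Dh-(Dh)_{B^m_\vartheta}|^2\le C_0\vartheta^2\,\mint_{B^m_{1/4}}|Dh|^2$; choosing $\pi'$ to be the $m$--plane which is the graph of the linear map $(Dh)_{B^m_\vartheta}$ and estimating $\mint_{B_\vartheta}|\vec T-\vec{\pi'}|^2\,d\|T\|$ directly on the graph part (the bad--set contribution being $\le C\vartheta^{-m}E^{1+\eta}$, which we absorb into $\psi_\vartheta$) yields the displayed bound with $\pi'$ as competitor plane.

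Finally I iterate. Fix $\delta>0$: since $2+\log C_0/\log\vartheta\to2$ as $\vartheta\to0$, pick $\vartheta=\vartheta(\delta)\in(0,1/4)$ with $C_0\vartheta^2\le\tfrac12\vartheta^{2-2\delta}$, then $\eps_3(\delta)\le\eps_2(\vartheta)$ with $\psi_\vartheta(t)\le\tfrac12\vartheta^{2-2\delta}$ for $t\le\eps_3$, so the improvement becomes $\E(T,B_{\vartheta r}(p))\le\tfrac12\vartheta^{2-2\delta}\E(T,B_r(p))$ whenever $\E(T,B_r(p))\le\eps_3$. Choosing $\eps$ in (H) small enough that the flatness bound forces $\E(T,B_{1/2}(p))\le C(m)\eps_0^2\le\eps_3$ for all $p\in B_{1/2}\cap\supp(T)$, one iterates at the scales $\vartheta^k/2$ (the excesses remaining below $\eps_3$ because they decrease), obtaining $\E(T,B_{\vartheta^k/2}(p))\le(\tfrac12\vartheta^{2-2\delta})^k\,C(m)\,\eps_0^2$, and then filling in intermediate radii with the elementary bound $\E(T,B_\rho(p))\le C(m)(\rho'/\rho)^m\,\E(T,B_{\rho'}(p))$ for $\rho\le\rho'$ gives the decay \eqref{e:decay_everywhere}. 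The main obstacle is the one--scale improvement: one has to control the spherical excess, the cylindrical excess and the Dirichlet energy of the Lipschitz approximation together, each with its own error term, check that the hypotheses of Proposition~\ref{p:approx} hold for the optimal plane $\pi$, and verify that $f$ keeps enough almost--minimality for the harmonic approximation lemma to apply.
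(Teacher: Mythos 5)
Your overall architecture --- flatness by compactness, a quantitative one-scale excess improvement at a small scale $\vartheta$ obtained from the harmonic approximation lemma, then iteration and interpolation of radii --- is a legitimate classical route and genuinely differs from the paper, which instead proves a decay at the single scale $1/2$ with the (asymptotically sharp) factor $2^{-m-2}+\theta$ by a contradiction/compactness argument (Proposition~\ref{p:basic iter}, resting on the convergence of the rescaled Lipschitz approximations to a harmonic function, Proposition~\ref{p:harm}) and then iterates dyadically; both implementations yield the exponent $2-2\delta$ for every $\delta>0$.

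There is, however, a genuine gap in your one-scale improvement. As you state it, its only hypothesis is $\E(T,B_r(p))\le \eps_2(\vartheta)$, and you claim that ``smallness of the spherical excess plus monotonicity'' already gives the height bound and $\proj^\pi_\#(T\res \cyl^\pi_{1/2})=\a{B^m_{1/2}}$. This is false: two parallel multiplicity-one planes form a calibrated (hence area-minimizing) cycle with zero spherical excess at every scale, yet the projection has multiplicity $2$, so Proposition~\ref{p:approx} cannot be applied; note also that at the points $p\neq 0$ where you run the iteration you only know $\theta(T,p)\ge 1$, not $=1$. What is missing is the additional hypothesis that the density ratio is almost one, $\|T\|(B_r(p))\le(\omega_m+\bar\eps)\,r^m$. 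This must be (i) established at scale $1/2$ for every $p\in\supp(T)\cap B_{1/2}$ --- it does follow from the mass convergence to the flat disk in your own compactness argument, but you never record it, and your ``one-line comparison'' only invokes the monotonicity \emph{lower} bound --- and (ii) propagated to all smaller radii by the monotonicity formula and carried as a standing hypothesis of the improvement lemma along the iteration. This is precisely how the paper proceeds: it deduces $\|T\|(B_{1/2}(p))\le(\omega_m+\bar\eps)2^{-m}$ from the compactness discussion at the beginning of the appendix, quotes monotonicity to get the bound at all scales, and only then iterates Proposition~\ref{p:basic iter}, whose hypotheses include the mass bound. With this hypothesis added and propagated, your scheme goes through.
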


From now on we will consider the constant $\delta$ fixed. Its
choice will be specified much later.

\begin{definition}\label{d:admissible}
For later reference, we say that a plane $\pi$ is \textit{admissible}
in $p$ at scale $\rho$ (or simply that $(p,\rho,\pi)$ is \textit{admissible})
if
\begin{equation}\label{e:good}
\E (T, B_{\rho} (p), \pi) \leq C_{m,n}\eps^2_0\, \rho^{2-2\delta},
\end{equation}
for some fixed (possibly large) dimensional constant $C_{m,n}$.
\end{definition}

Proposition~\ref{p:degiorgi_improved} guarantees that, for every
$p$ and $r$ as in the statement, there exists always an admissible plane $\pi_{p,r}$.
The following is a straightforward consequence of Proposition~\ref{p:degiorgi_improved}
which will be extensively used.

\begin{corol}\label{c:decay_everywhere}
There are dimensional constants $C$, $C'$ and $C''$ with the following
property. For every $\delta, \eps_0>0$, there is $\eps>0$ such that,
under the assumption (H):
\begin{itemize}
\item[(a)] if $(p,\rho,\pi)$ and $(p',\rho',\pi')$ are admissible
(according to Definition \ref{d:admissible}), then
\begin{equation*}
|\vec{\pi} - \vec{\pi}'| \leq C\,\eps_0 \big(\max \{\rho, \rho', |q-q'|\}\big)^{1-\delta};
\end{equation*}
\item[(b)] there exists a unique tangent plane $\pi_p$ to $T$ at every
$p \in \supp (T)\cap B_{1/2}$; moreover, if $(p,\rho,\pi)$ is admissible then 
$|\pi-\pi_p|\leq C'\,\eps_0\,\rho^{1-\delta}$ and, vice versa,
if $|\pi-\pi_p|\leq C''\,\eps_0\,\rho^{1-\delta}$, then
$(p, \rho, \pi)$ is admissible;
\item[(c)] for every $q\in B_{1/4}^m$, there exists a unique $u\in \R{n}$ such that
$(q,u)\in \supp(T)\cap B_{1/2}$.
\end{itemize}
\end{corol}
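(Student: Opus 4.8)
The plan is to derive all three assertions from Proposition~\ref{p:degiorgi_improved} together with elementary comparison of oriented planes. For part (a), fix two admissible triples $(p,\rho,\pi)$ and $(p',\rho',\pi')$ and set $R:=\max\{\rho,\rho',|q-q'|\}$, where $q=\proj(p)$ and $q'=\proj(p')$. The key observation is that both $B_\rho(p)$ and $B_{\rho'}(p')$ are contained in a ball $B_{CR}(p)$ of comparable radius, so the localized excesses $\E(T,B_\rho(p),\pi)$ and $\E(T,B_{\rho'}(p'),\pi')$ control $\E(T,B_{CR}(p),\pi)$ and $\E(T,B_{CR}(p),\pi')$ up to a multiplicative factor $(R/\rho)^m$, $(R/\rho')^m$ respectively; by the admissibility bound \eqref{e:good} this gives $\E(T,B_{CR}(p),\pi)\le C\eps_0^2 R^{2-2\delta}$ and likewise for $\pi'$. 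Then I would estimate $|\vec\pi-\vec\pi'|$ by triangle inequality: on the set where $T$ is close to $\vec\pi$ and also close to $\vec\pi'$ (which has nearly full mass in $B_{CR}(p)$ since both excesses are tiny), one gets $|\vec\pi-\vec\pi'|^2\lesssim \E(T,B_{CR}(p),\pi)+\E(T,B_{CR}(p),\pi')\lesssim \eps_0^2 R^{2-2\delta}$, and since $R\le 1$ we may absorb the extra power to conclude $|\vec\pi-\vec\pi'|\le C\eps_0 R^{1-\delta}$. The one technical point is ensuring that the two ``good sets'' intersect in a set of positive measure; this follows because each has measure at least $(1-C\eps_0^2)\|T\|(B_{CR}(p))$, hence their intersection is nonempty for $\eps$ small.

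For part (b), existence of a tangent plane at $p$ comes from a standard Cauchy-sequence argument: apply part (a) with $\rho'=\rho/2$ and $p=p'$ to get $|\vec\pi_{p,\rho}-\vec\pi_{p,\rho/2}|\le C\eps_0\rho^{1-\delta}$ for admissible planes at consecutive dyadic scales; summing the geometric series shows $\{\vec\pi_{p,2^{-k}}\}$ is Cauchy and its limit $\pi_p$ satisfies $|\vec\pi_{p,\rho}-\vec\pi_p|\le C\eps_0\rho^{1-\delta}$. That this limit is genuinely the tangent plane (i.e.\ that blow-ups of $T$ at $p$ converge to $\a{\pi_p}$) follows because $\E(T,B_\rho(p),\pi_p)\le 2\E(T,B_\rho(p),\pi_{p,\rho})+C|\vec\pi_{p,\rho}-\vec\pi_p|^2\to 0$, and $\theta(T,p)=1$ forces the blow-up limit to be a single multiplicity-one plane (here I would invoke the monotonicity formula, or simply cite that density one plus vanishing excess at all scales identifies the tangent). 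The estimate $|\pi-\pi_p|\le C'\eps_0\rho^{1-\delta}$ for any admissible $(p,\rho,\pi)$ is then part (a) again with $(p',\rho',\pi')=(p,\rho,\pi_p)$ (noting $(p,\rho,\pi_p)$ is admissible with a slightly larger constant, by what we just proved), and the converse statement is an immediate unwinding of Definition~\ref{d:admissible}: if $|\pi-\pi_p|\le C''\eps_0\rho^{1-\delta}$ then $\E(T,B_\rho(p),\pi)\le 2\E(T,B_\rho(p),\pi_p)+C(C'')^2\eps_0^2\rho^{2-2\delta}\le C_{m,n}\eps_0^2\rho^{2-2\delta}$ provided $C_{m,n}$ was chosen large enough relative to $C''$ and to the constant in the tangent-plane decay.

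For part (c), I would use the tangent plane $\pi_0$ at the origin, which by our normalization is $\{x_{m+1}=\ldots=x_{m+n}=0\}$; by part (b), $|\vec\pi_p-\vec\pi_0|\le C\eps_0$ for every $p\in\supp(T)\cap B_{1/2}$, so the tangent plane to $T$ is everywhere within a small cone of $\pi_0$. This implies, via the constancy theorem or a direct slicing argument, that $\proj^{\pi_0}_\#(T\res\cyl^{\pi_0}_{1/4})$ equals $\a{B^m_{1/4}}$ with multiplicity one, hence for a.e.\ $q\in B^m_{1/4}$ the slice $\langle T,\proj^{\pi_0},q\rangle$ is a single point; uniqueness for \emph{every} $q$ then follows because two distinct points of $\supp(T)$ over the same $q$ would create, in a small ball, excess bounded below by a fixed constant, contradicting \eqref{e:decay_everywhere}. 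The main obstacle throughout is part (c): one must be careful that the cone condition on tangent planes genuinely upgrades to the graphical structure of $\supp(T)$ over the disk, and the cleanest route is probably to combine the small-excess graphicality (Theorem~\ref{t:degiorgi} applied at small scale around each point, giving local graphs) with a connectedness/monodromy argument to patch the local graphs into a single global one; alternatively one invokes the Lipschitz approximation of Proposition~\ref{p:approx} at scale $1/4$, whose exceptional set is empty here because the excess decays like $r^{2-2\delta}$ faster than the error term $E^{1+\eta}$ can accommodate — I would make this precise by choosing $\delta$ small enough that $2-2\delta>0$ dominates.
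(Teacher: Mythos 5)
The central step of your part (a) has a genuine gap. From the admissibility of $(p,\rho,\pi)$ you pass to the bound $\E(T,B_{CR}(p),\pi)\leq C\,\eps_0^2\,R^{2-2\delta}$ by asserting that the excess on the large ball is controlled by the excess on the small ball up to the factor $(R/\rho)^m$. This inequality goes the wrong way: the excess is an integral of $|\vec T-\vec\pi|^2$ over the ball, so enlarging the ball can only increase it, and admissibility of $\pi$ at scale $\rho$ gives no information on $|\vec T-\vec\pi|$ over $B_{CR}(p)\setminus B_\rho(p)$. The elementary comparison is the reverse one, $\E(T,B_\rho(p),\pi)\leq C (R/\rho)^m\,\E(T,B_{CR}(p),\pi)$. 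The bound you want at scale $CR$ is true a posteriori, but proving it amounts to comparing $\pi$ with an optimal plane at scale $CR$, i.e.\ exactly the quantity part (a) is meant to estimate, so as written the argument is circular; and a one-step comparison via Cauchy--Schwarz on $B_\rho(p)$ produces a factor $(R/\rho)^{m/2}$ that destroys the estimate when $\rho\ll R$.

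The repair is the dyadic telescoping that you already use inside part (b): at a fixed center compare admissible planes only at consecutive scales $\rho$ and $2\rho$, where the volume factor is the fixed constant $2^m$ and the monotonicity bound $\|T\|(B_s(p))\geq\omega_m s^m$ allows one to average $|\vec\pi-\vec T|^2+|\vec\pi'-\vec T|^2$ over the smaller ball; summing the geometric series handles arbitrary pairs of scales at the same point and produces the tangent plane $\pi_p$. This is exactly the paper's route. Part (a) in full generality is then obtained by comparing $\pi$ with $\pi_p$, $\pi'$ with $\pi_{p'}$, and $\pi_p$ with $\pi_{p'}$ at the single scale $r=|p-p'|$ (again averaging over $B_r(p)\cap B_r(p')$ with the monotonicity mass bound), and finally converting $|p-p'|$ into $|q-q'|$ through the Lipschitz-graph property of $\supp(T)$ --- a point your proposal also skips, since your $R$ is defined with $|q-q'|$ while the inclusion $B_{\rho'}(p')\subset B_{CR}(p)$ requires control of $|p-p'|$. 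Your parts (b) and (c) are essentially the paper's arguments (the paper likewise deduces (c) from $\proj_\#(T\res B_{1/2}\cap\cyl_{1/4})=\a{B_{1/4}}$ together with the uniform tilt bound), so once (a) is rearranged as above the proposal collapses onto the paper's proof.
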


\begin{remark}
An important point in the previous corollary is that the constant $C''$ can
be chosen arbitrarily large, provided the constant $C_{m,n}$ in Definition \ref{d:admissible}
is chosen accordingly. This fact is an easy consequence of the proof given in the appendix.
\end{remark}

Theorem~\ref{t:degiorgi} is clearly contained in the previous corollary
(with the additional feature that the H\"older exponent $\beta$ is equal to $1-\delta$,
i.e.~is arbitrarily close to $1$).
In order to make the paper self-contained, we will include also a proof of
Proposition~\ref{p:degiorgi_improved} and Corollary~\ref{c:decay_everywhere} in the appendix.

\subsection{Two technical lemmas}
We conclude this section with the following two lemmas which will be needed in the sequel.

Consider two functions $f:D\subset\pi_0\to \pi_0^\perp$ and $f': D'\subset\pi\to \pi^\perp$,
with associated systems of coordinates $x$ and $x'$, respectively, and
$x'(p)=A\cdot x(p)$ for every $p\in\R{m+n}$.
If for every $q'\in D'$ there exists a unique $q\in D$ such that $(q', f'(q')) = A \cdot (q, f(q))$ and vice versa, then
it follows that $\gr_{\pi_0}(f)=\gr_\pi(f')$, where
\[
\gr_{\pi_0} (f) := \big\{(q, f(q))\in D\times \pi_0^\perp\big\}
\quad\text{and}\quad
\gr_\pi (f') := \big\{(q', f'(q'))\in D'\times \pi^\perp\big\}.
\]
The following lemma compares norms of functions (and of differences of functions)
having the same graphs in two nearby system of coordinates.

\begin{lemma}\label{l:rotation}
There are constants $c_0,C>0$ with the following properties.
Assume that
\begin{itemize}
\item[(i)] $\|A-{\rm Id}\|\leq c_0$, $r\leq 1$;
\item[(ii)] $(q,u)\in\pi_0\times\pi_0^\perp$ is given and $f,g: B^m_{2r} (q)\to \R{n}$ are Lipschitz functions such that
\begin{equation*}
\Lip (f), \Lip (g) \leq c_0\quad\text{and}\quad |f(q)-u|+|g(q)-u|\leq c_0\, r.
\end{equation*}
\end{itemize}
Then, in the system of coordinates $x'= A\cdot x$, for $(q',u') = A \cdot(q,u)$, the following holds:
\begin{itemize}
\item[(a)] $\gr_{\pi_0} (f)$ and $\gr_{\pi_0} (g)$ are the graphs of two Lipschitz functions $f'$ and $g'$, whose domains of definition contain both $B_{r} (q')$;
\item[(b)] $\|f'-g'\|_{L^1 (B_{r} (q'))}\leq C\,\|f-g\|_{L^1 (B_{2r} (q))}$;
\item[(c)] if $f\in C^4 (B_{2r} (q))$, then $f'\in C^4 (B_{r} (q'))$, with the
estimates 
\begin{eqnarray}
\|f'- u'\|_{C^3}&\leq& \Phi \left(\|A-\Id\|, \|f-u\|_{C^3}\label{e:est_C3}\right)\, ,\\
\|D^4 f'\|_{C^0} &\leq& \Psi \left(\|A-\Id\|, \|f-u\|_{C^3}\right) 
\left(1+ \|D^4 f\|_{C^0}\right)\, ,
\label{e:est_C4}
\end{eqnarray}
where $\Phi$ and $\Psi$ are smooth functions.
\end{itemize}
\end{lemma}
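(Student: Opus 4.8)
\emph{Proof sketch.} The plan is to realize $\gr_{\pi_0}(f)$ in the coordinates $x'$ through an explicit change of variables and to read the three conclusions off the regularity of that map. Write $A$ as a block matrix with blocks $A_{11}$ ($m\times m$), $A_{12}$ ($m\times n$), $A_{21}$ ($n\times m$) and $A_{22}$ ($n\times n$) relative to the splitting $\R{m+n}=\pi_0\times\pi_0^\perp$; hypothesis (i) gives $\|A_{11}-\Id\|,\|A_{22}-\Id\|,\|A_{12}\|,\|A_{21}\|\le Cc_0$, so in particular $A_{11}$ is invertible with $\|A_{11}^{-1}\|\le 1+Cc_0$. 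A point $(z,f(z))$ of $\gr_{\pi_0}(f)$, with $z\in B^m_{2r}(q)$, has $\pi$-component $\Phi_f(z):=A_{11}z+A_{12}f(z)$ and $\pi^\perp$-component $H_f(z):=A_{21}z+A_{22}f(z)$ in the $x'$-system, where $q'$ (resp.\ $u'$) denotes the $\pi$- (resp.\ $\pi^\perp$-) component of $A\cdot(q,u)$, i.e.\ $q'=A_{11}q+A_{12}u$. Thus everything reduces to understanding the map $\Phi_f$.

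For (a) I would first note that $\Phi_f$ is injective on $B^m_{2r}(q)$: the identity $A_{11}(z_1-z_2)=-A_{12}(f(z_1)-f(z_2))$ forces $|z_1-z_2|\le\|A_{11}^{-1}\|\,\|A_{12}\|\,\Lip(f)\,|z_1-z_2|\le Cc_0^2|z_1-z_2|$. For surjectivity onto a ball, split $\Phi_f=L+R$ with $L(z)=A_{11}z+A_{12}f(q)$ affine invertible and $R(z)=A_{12}(f(z)-f(q))$, $R(q)=0$, $\Lip(R)\le Cc_0^2$; since $|q'-\Phi_f(q)|=|A_{12}(u-f(q))|\le c_0^2 r$, for each $w\in B_r(q')$ the map $z\mapsto L^{-1}(w-R(z))$ is a contraction of $\overline{B^m_{2r}(q)}$ into itself, so Banach's fixed-point theorem yields a unique $z=\Phi_f^{-1}(w)\in B^m_{2r}(q)$ with $\Phi_f(z)=w$. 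As $|\Phi_f(z_1)-\Phi_f(z_2)|\ge(1-Cc_0)|z_1-z_2|$, one gets $\Lip(\Phi_f^{-1})\le(1-Cc_0)^{-1}$; hence $f':=H_f\circ\Phi_f^{-1}$ is Lipschitz on $B_r(q')$ with $\Lip(f')\le(\|A_{21}\|+\|A_{22}\|\Lip(f))\Lip(\Phi_f^{-1})\le Cc_0$, and by construction its graph in the $x'$-coordinates is $\gr_{\pi_0}(f)$. The same applies to $g$ with its own map $\Phi_g$.

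For (b), fix $w\in B_r(q')$ and set $z_f=\Phi_f^{-1}(w)$, $z_g=\Phi_g^{-1}(w)$. From $A_{11}z_f+A_{12}f(z_f)=w=A_{11}z_g+A_{12}g(z_g)$ one gets $|z_f-z_g|\le\|A_{11}^{-1}\|\,\|A_{12}\|\,|g(z_g)-f(z_f)|$, and inserting $|g(z_g)-f(z_f)|\le\Lip(g)|z_f-z_g|+|(f-g)(z_f)|$ and absorbing gives $|z_f-z_g|\le Cc_0|(f-g)(z_f)|$, whence $|f'(w)-g'(w)|=|A_{21}(z_f-z_g)+A_{22}(f(z_f)-g(z_g))|\le C|(f-g)(z_f)|$. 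Integrating in $w$ over $B_r(q')$ and applying the area formula to the bi-Lipschitz map $\Phi_f$, whose Jacobian $|\det(A_{11}+A_{12}Df)|$ is bounded and for which $\Phi_f^{-1}(B_r(q'))\subset B^m_{2r}(q)$, yields (b).

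For (c), if $f\in C^4$ then $\Phi_f=A_{11}\,\mathrm{id}+A_{12}f\in C^4$ with $D\Phi_f$ everywhere invertible (being $Cc_0$-close to $\Id$), so the inverse function theorem upgrades $\Phi_f^{-1}$ to a $C^4$ map and $f'=H_f\circ\Phi_f^{-1}\in C^4$. Differentiating $\Phi_f^{-1}\circ\Phi_f=\mathrm{id}$ shows inductively that $D^k\Phi_f^{-1}$ is a universal polynomial in $(D\Phi_f)^{-1},D^2\Phi_f,\dots,D^k\Phi_f$ composed with $\Phi_f^{-1}$; hence $D^{\le3}\Phi_f^{-1}$ depends smoothly on $\|A-\Id\|$ and $\|f-u\|_{C^3}$ alone, while $D^4\Phi_f^{-1}$ depends in addition \emph{linearly} on $D^4 f$. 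Feeding this into the Fa\`a di Bruno expansions of $D^{\le3}(H_f\circ\Phi_f^{-1})$ and of $D^4(H_f\circ\Phi_f^{-1})$, in which $D^4 f$ again enters only linearly, yields \eqref{e:est_C3} and \eqref{e:est_C4} with smooth $\Phi,\Psi$ (the $C^0$-part of $f'-u'$ being controlled since $\Phi_f^{-1}(q')$ is $Cc_0 r$-close to $q$ and $|f-u|$ is small). The only genuinely delicate point is precisely this bookkeeping in (c) — verifying that $D^4 f$ appears in the expression for $D^4 f'$ linearly, with all coefficients smooth functions of $\|A-\Id\|$ and $\|f-u\|_{C^3}$; everything else is a routine perturbation argument, valid once $c_0$ is fixed small relative to the dimensional constants above.
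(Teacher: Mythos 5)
Your proposal is correct and follows essentially the same route as the paper: your maps $\Phi_f$ and $H_f$ are exactly the paper's projections $I$ and $F$ of the graph in the rotated frame, part (a) is the same perturbative inversion of the projected map, and part (c) is the same inverse-function-theorem/chain-rule bookkeeping. The only cosmetic difference is in (b), where the paper derives the pointwise bound $|f'(x')-g'(x')|\le 2\,|f(I^{-1}(x'))-g(I^{-1}(x'))|$ by a sine-rule triangle argument while you obtain the analogous estimate by linear-algebraic absorption; both then conclude with the biLipschitz change of variables.
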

\begin{proof} Let $P: \R{m\times n}\to \R{m}$ and
$Q: \R{m\times n}\to \R{n}$ be the usual orthogonal projections.
Set $\pi=A(\pi_0)$ and
consider the maps $F, G: B_{2r} (q)\to \pi^\perp$ and $I, J: B_{2r} (q)\to \pi$
given by
\[
F (x) = Q (A((x,f(x)))\quad\text{and}\quad G(x) = Q (A((x, g(x))),
\]
\[
I(x)= P (A((x, f(x)))\quad\text{and}\quad J (x) = P (A((x, g(x))).
\]
Obviously, if $c_0$ is sufficiently small, $I$ and $J$ are injective Lipschitz maps.
Hence, $\gr_{\pi_0} (f)$ and $\gr_{\pi_0} (g)$ coincide, in the new coordinates, with the graphs of the functions
$f'$ and $g'$ defined respectively in $D:= I (B_{2r} (q))$ and $\tilde{D}:= J (B_{2r} (q))$
by $f' = F \circ I^{-1}$ and $g'= G \circ J^{-1}$.
If $c_0$ is chosen sufficiently small, then we can find a constant $C$
such that 
\begin{equation}\label{e:Lip_bound}
\Lip (I), \; \Lip (J),\; \Lip (I^{-1}),\; \Lip (J^{-1}) \leq 1+C\,c_0,
\end{equation}
and
\begin{equation}\label{e:Linfty_bound}
|I (q)-q'|, |J (q)-q'|\leq C\,c_0\, r.
\end{equation}

Clearly, \eqref{e:Lip_bound} and \eqref{e:Linfty_bound} easily imply (a).
Conclusion (c) is a simple consequence of the inverse function theorem.
Finally we claim that, for small $c_0$,
\begin{equation}\label{e:claim}
|f'(x')-g'(x')|\leq 2 \,|f (I^{-1} (x')) - g (I^{-1} (x'))|
\quad\forall \;x'\in B_r(q'),
\end{equation}
from which,
using the change of variables formula for biLipschitz homeomorphisms
and \eqref{e:Lip_bound}, (b) follows.

In order to prove \eqref{e:claim}, consider 
any $x'\in B_r (q')$, set $x:= I^{-1} (x')$ and
\[
p_1 := (x, f(x))\in \pi_0\times \pi_0^\perp,\quad
p_2 := (x, g(x))\in \pi_0\times \pi_0^\perp\quad
\text{and}\quad
p_3 := (x', g'(x'))\in \pi\times \pi^\perp.
\]
Obviously $|f'(x')- g'(x')|= |p_1-p_3|$ and $|f(x)- g(x)|=|p_1-p_2|$.
Note that, $g(x)= f (x)$ if and only if $g'(x')= f' (x')$, and in this case \eqref{e:claim} follows trivially.
If this is not the case, the triangle with vertices $p_1$, $p_2$ and $p_3$ is non-degenerate.
Let $\theta_i$ be the angle at $p_i$.
Note that, $\Lip (g)\leq c_0$ implies $|90^\circ-\theta_2|\leq C c_0$ and $\|A-{\rm Id}\|\leq c_0$ implies
$|\theta_1|\leq C c_0$, for some dimensional constant $C$.
Since $\theta_3 = 180^\circ - \theta_1 - \theta_2$, we conclude
as well $|90^\circ - \theta_3|\leq C c_0$.
Therefore, if $c_0$ is small enough, we have $1 \leq 2\sin \theta_3$, so that,
by the Sinus Theorem,
\begin{equation*}
|f'(x')-g'(x')|= |p_1-p_3| = \frac{\sin \theta_2}{\sin \theta_3}\, |p_1-p_2|
\leq 2 \, |p_1-p_2| = 2 \,|f(x)-g(x)|,
\end{equation*}
thus concluding the claim.
\end{proof}

The following is an elementary lemma on polynomials.

\begin{lemma}\label{l:poly}
For every $n,m\in \N$, there exists a constant $C(m,n)$ such that,
for every polynomial $R$ of degree at most $n$ in $\R{m}$ and every positive $r>0$,
\begin{equation}\label{e:poly}
|D^k R (q)| \leq \frac{C}{r^{m+k}} \int_{B_r (q)} |R|\, \quad \text{for all }\; k\leq n\; \text{ and all }\;q\in\R{m}.
\end{equation}
\end{lemma}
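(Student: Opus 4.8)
The plan is to reduce to a normalized situation and then invoke the equivalence of all norms on the finite-dimensional space of polynomials of degree at most $n$.

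First I would note that, by translation and scaling, it suffices to prove the inequality in the case $q=0$ and $r=1$. Indeed, given $R$, $q$ and $r>0$, the polynomial $R_{q,r}(x):=R(q+rx)$ again has degree at most $n$; the chain rule gives $D^kR_{q,r}(0)=r^k\,D^kR(q)$, while the substitution $y=q+rx$ gives $\int_{B_1(0)}|R_{q,r}|=r^{-m}\int_{B_r(q)}|R|$. Hence the estimate $|D^kR_{q,r}(0)|\le C\int_{B_1(0)}|R_{q,r}|$ is exactly \eqref{e:poly} for the original data $R$, $q$, $r$.

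Next, let $V$ denote the (finite-dimensional) vector space of polynomials of degree $\le n$ on $\R m$, and consider on it the two seminorms $R\mapsto\int_{B_1(0)}|R|$ and $R\mapsto\sum_{k=0}^n|D^kR(0)|$. The first is in fact a norm: if $\int_{B_1(0)}|R|=0$ then $R$ vanishes on an open set, hence $R\equiv 0$. Since on a finite-dimensional vector space all norms are equivalent, and every seminorm is dominated by every norm, there is a constant $C=C(m,n)$ with $\sum_{k=0}^n|D^kR(0)|\le C\int_{B_1(0)}|R|$ for all $R\in V$; in particular $|D^kR(0)|\le C\int_{B_1(0)}|R|$ for every $k\le n$. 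Combining this with the first step yields \eqref{e:poly}.

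I do not expect any real obstacle: the argument is purely soft (equivalence of norms on $V$, equivalently compactness of the unit sphere of $V$). The one point to keep in mind is that a single constant must serve all $k\le n$ at once, which is why it is convenient to compare $\int_{B_1(0)}|R|$ with the full sum $\sum_{k\le n}|D^kR(0)|$ rather than with each term separately. If an explicit constant were wanted, one could instead expand $R$ in the monomial basis, bound the coefficients by $L^1(B_1)$ estimates, and differentiate termwise; but the soft argument is shorter and entirely sufficient here.
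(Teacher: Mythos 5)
Your proposal is correct and follows essentially the same route as the paper: rescale and translate to reduce to the unit ball centered at the origin, then invoke the equivalence of norms on the finite-dimensional space of polynomials of degree at most $n$ (comparing $\int_{B_1(0)}|R|$ with $\sum_{k\leq n}|D^kR(0)|$). Your explicit check that $\int_{B_1(0)}|R|$ is indeed a norm is a small point the paper leaves implicit, but the argument is the same.
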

\begin{proof} We rescale and translate the variables by setting
$S (x) = R (rx+q)$. The lemma is then reduced to show that
\begin{equation}\label{e:norms}
\sum_{k=0}^n |D^k S (0)|\leq C \int_{B_1 (0)} |S|,
\end{equation}
for every polynomial $S$ of degree at most $n$ in $\R{m}$, with $C=C(n,m)$.
Consider now the vector space $V^{n,m}$ of polynomials of degree
at most $n$ in $m$ variables. $V^{n,m}$ is obviously finite dimensional.
Moreover, on this space, the two quantities
\[
\|S\|_1:= \sum_{k=0}^n |D^k S (0)| \quad \text{and}\quad
\|S\|_2:= \int_{B_1 (0)} |S|
\]
are two norms.
The inequality \eqref{e:norms} is then a corollary
of the equivalence of norms on finite-dimensional vector spaces.
\end{proof}

\section{The approximation scheme and the main theorem}

The $C^{3,\alpha}$ regularity of the current $T$ will be deduced from
the limit of a suitable approximation scheme. In this section we describe
the scheme and state the main theorem of the paper.

We start by fixing a nonnegative kernel $\varphi\in C^\infty_c (B^m_1)$ 
which is radial and satisfies $\int\varphi =1$.
As usual, for $\tau>0$, we set $\varphi_\tau (w) := \tau^{-m} \varphi (w/\tau)$.
Consider the area-minimizing current
$S=T\res(B_{1/2}^{m+n}\cap\cyl^{\pi_0}_{1/4})$
(recall that $\pi_0 = \{x_{m+1} = \ldots = x_{m+n} = 0\}$ is the tangent
plane to $T$ at $0$).
From Corollary~\ref{c:decay_everywhere} (b) and (c),
it is simple to deduce the following: if $p=(q,u)\in\pi\times\pi^\perp$,
$\rho\leq 2^{-6}$ and $\pi$ form an admissible triple $(p,8\,\rho,\pi)$ 
with $p\in \supp(T)\cap B_{1/16}$, then 
\[
\proj^\pi_\#(S\res \cyl^\pi_{8\rho}(q))=\a{B_{8\rho}(q)}
\quad\text{and}\quad
\de S=0\quad\text{in}\quad \cyl^\pi_{8\rho}(q).
\]
From now on, we will assume that $C_{m,n}\,\eps^2_0\,2^{-3(2-2\delta)}\leq
\eps_1$,
where $\eps_1$ is the constant of Proposition~\ref{p:approx} and $C_{m,n}$ the 
constant of Proposition~\ref{p:degiorgi_improved}.
This assumption guarantees the existence of the Lipschitz approximation of
Proposition~\ref{p:approx},
which we restrict to $B^m_{6\rho} (q)$,
$f: B^m_{6\rho} (q)\subset\pi \to \pi^\perp$.
Then, consider the following functions:
\begin{itemize}
\item[($I_1$)] $\hat{f} = f * \varphi_{\rho}$;
\item[($I_2$)] $\bar{f}$ such that
\begin{equation*}
\left\{
\begin{array}{l}
\Delta \bar{f} = 0 \quad \text{on }\,B^m_{4\rho} (q),\\
\bar{f}|_{\partial B^m_{4\rho} (q)} = \hat{f};
\end{array}\right.
\end{equation*}
\item[($I_3$)] $g: B^m_\rho (q')\subset\pi_0\to \pi_0^\perp$,
with $x(p)=(q',u')\in\pi_0\times\pi_0^\perp$, such that
$\gr_{\pi_0}(g)=\gr_{\pi}(\bar f)$ in the cylinder $\cyl_\rho (q')\subset\pi_0\times \pi_0^\perp$.
\end{itemize}

\begin{remark}
In order to proceed further, we need to show the existence of $g$ as in ($I_3$).
We wish, therefore, to apply Lemma \ref{l:rotation} to the function $\bar{f}$.
First recall that $|\pi-\pi_0|\leq C\eps_0|p|^{1-\delta}\leq C\eps_0$ by
Corollary \ref{c:decay_everywhere}. Thus, assumption (i) in Lemma \ref{l:rotation}
is satisfied provided $\eps_0$ is chosen sufficiently small.
Next note that, by the interior estimates for the harmonic functions and \eqref{e:approx1}, one has
\[
\Lip(\bar f|_{B_{3\rho}})\leq C\Lip(\hat f|_{B_{4\rho}})\leq C\,E^\eta\, .
\]
Moreover, if we consider the ball $B_s (p)$ with $s= \rho E^{\eta/(2m)}$, by the monotonocity
formula, $\|T\| (B_s (p))\geq \omega_m \rho^m E^{\eta/2}$. Thus, by \eqref{e:approx2},
the graph of $f$ contains a point in $B_s (p)$. Using the Lipschitz bound \eqref{e:approx1},
we then achieve $\|f-u\|_{C^0 (B_{6\rho} (q))}\leq C \rho E^{\eta/2}$, which in turn implies
$\|\bar{f} - u\|_{C^0 (B_{4\rho} (q))}$. Recalling that $E\leq C \eps^2_0 \rho^{2-2\delta}$,
we conclude that condition (ii) in Lemma \ref{l:rotation} is satisfied when
$\eps_0$ is sufficiently small.
Therefore Lemma \ref{l:rotation}(a) guarantees that the function $g$ exists.
\end{remark}

\begin{remark}
It is obvious that in order to define the function $g$ we could have used,
in place of the $f$ given by Proposition \ref{p:approx},
the function whose graph gives the current $T$ in $B_{6\rho} (p)$.
This would have simplified many of the computations below. However, as mentioned
in the introduction, we hope that our choice helps in the understanding of
the more general construction of Almgren. 
\end{remark}

The function $g$ is the main building block of the
construction of this paper. It is called the {\em $(p,\rho,\pi)$-interpolation of $T$} or,
if $\E (T, B_{8\rho} (p)) = \E (T, B_{8\rho} (p),\pi)$,
simply the {\em $(p, \rho)$-interpolation of $T$}.

The main estimates of the paper are contained in the following proposition.

\begin{propos}\label{p:main}
There are constants $\alpha,C>0$ such that, if $g, g'$ are respectively $(p,\rho,\pi)$- and
$(p',\rho,\pi')$-interpolations, then
\begin{subequations}\label{e:ABC}
\begin{gather}
\rho^{1-\alpha} \|D^4 g\|_{C^0}+ \|g\|_{C^3}\leq C,\label{e:A}\\
\sum_{\ell=0}^4
\rho^{\ell-3-\alpha}\|D^\ell g (x) - D^\ell g' (x)\|_{C^0}\leq C \quad\text{in }\;B_\rho(p)\cap B_{\rho}(p'),\label{e:B}\\
|D^3 g (q) - D^3 g' (q')|\leq C |q-q'|^\alpha,
\quad\text{with }\;p=(q,u),\,p'=(q',u').\label{e:C}
\end{gather}
\end{subequations}
\end{propos}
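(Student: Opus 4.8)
The plan is to prove \eqref{e:A} as a self-contained bound for a single interpolation and \eqref{e:B}--\eqref{e:C} as comparison bounds for two interpolations at the same scale $\rho$, all three resting on the same ingredients. First I would record that the cylindrical excess $E:=\E(T,\cyl^\pi_{8\rho}(q))$ is comparable to the spherical excess controlled by admissibility: since $(p,8\rho,\pi)$ is admissible, near $p$ the current $T$ is a Lipschitz graph over $B^m_{8\rho}(q)\subset\pi$ with small Lipschitz constant and small height (as in the Remark after $(I_3)$), so the support of $T\res\cyl^\pi_{8\rho}(q)$ lies in $B_{9\rho}(p)$ and hence $E\le C\,\E(T,B_{9\rho}(p),\pi)\le C\,\eps_0^2\,\rho^{2-2\delta}$ by \eqref{e:good} (applicable at the scale $9\rho$, since $C''$ in Corollary~\ref{c:decay_everywhere}(b) may be taken large). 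Next I would invoke Corollary~\ref{c:decay_everywhere}(a)--(b) for the $C^{1,1-\delta}$-type control on $T$ (the tangent-plane map is H\"older-$(1-\delta)$ with seminorm $\le C\eps_0$). The remaining tools are elementary: mollification estimates for $\hat f=f*\varphi_\rho$; interior estimates for harmonic functions, together with the Poisson bound $\|\bar f-\hat f\|\le C\rho^2\|\Delta\hat f\|$ for the harmonic replacement $(I_2)$; Lemma~\ref{l:rotation} to pass between the $\pi$-graph $\bar f$ and the $\pi_0$-graph $g$; Lemma~\ref{l:poly} to compare Taylor polynomials; and interpolation inequalities between $C^0$- and $C^4$-bounds.

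For \eqref{e:A}: from \eqref{e:approx1} one has $\Lip(f)\le CE^\eta$; as in the Remark after $(I_3)$, $\|f-u\|_{C^0(B_{6\rho}(q))}\le C\rho E^{\gamma}$ for a fixed $\gamma>0$; and, by \eqref{e:approx2} with Corollary~\ref{c:decay_everywhere} (or, as the second Remark allows, replacing $f$ by the graph function of $T$ on $B_{6\rho}(p)$), $[Df]_{C^{0,1-\delta}(B_{6\rho}(q))}\le C\eps_0$ and $\|Df\|_{C^0(B_{6\rho}(q))}\le C\eps_0\rho^{1-\delta}$. Standard mollification estimates then bound $\hat f$ and its derivatives on $B_{4\rho}(q)$; applying the interior estimates for harmonic functions to $\bar f-\ell$, with $\ell$ the best affine approximation, together with the maximum principle $\|\bar f-\ell\|_{C^0(B_{4\rho}(q))}\le\|\hat f-\ell\|_{C^0(B_{4\rho}(q))}$ and $\|\hat f-f\|_{C^0}\le C\eps_0\rho^{2-\delta}$ (the first moment of $\varphi$ vanishes by radiality), one obtains the required bounds for $\bar f$ in $C^3$ and for $D^4\bar f$. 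Since $|\pi-\pi_0|\le C\eps_0$, Lemma~\ref{l:rotation}(c) — i.e.\ \eqref{e:est_C3}--\eqref{e:est_C4} — transfers these from $\bar f$ to $g$, giving \eqref{e:A}.

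For \eqref{e:B}: if $B_\rho(p)\cap B_\rho(p')\ne\emptyset$ then $|q-q'|\le 2\rho$ and Corollary~\ref{c:decay_everywhere}(a) gives $|\vec\pi-\vec\pi'|\le C\eps_0\rho^{1-\delta}$. The heart of the matter is the case $\ell=0$, namely $\|g-g'\|_{C^0(B_\rho(p)\cap B_\rho(p'))}\le C\rho^{3+\alpha}$: once this holds, since \eqref{e:A} gives $\|D^4(g-g')\|_{C^0}\le C\rho^{\alpha-1}$, the interpolation inequality $\|D^\ell h\|_{C^0}\le C\,\|h\|_{C^0}^{1-\ell/4}\,\|D^4h\|_{C^0}^{\ell/4}$ applied to $h=g-g'$ yields $\|D^\ell(g-g')\|_{C^0}\le C\rho^{3+\alpha-\ell}$ for $\ell=1,\dots,4$, which is precisely the remaining content of \eqref{e:B}. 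To prove the $\ell=0$ bound I would compare $g$ and $g'$ through the graph function of $T$: $f$ and $f'$ both coincide with it off a set of small measure, so — after bringing $f'$ into the $\pi$-coordinates by Lemma~\ref{l:rotation}(b) — the two descriptions of the current differ only through the small rotation ($\le C\eps_0\rho^{1-\delta}$) and the translation $|q-q'|$; then, using that $(I_1)$ and $(I_2)$ are \emph{linear} and that their extension balls differ only by that shift and rotation, the interior estimates for harmonic functions propagate this into the claimed $C^0$-closeness of $\bar f,\bar f'$ (and hence, again by Lemma~\ref{l:rotation}(b), of $g,g'$). The exponent $\alpha$ is fixed along the way, together with $\delta$ taken sufficiently small.

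For \eqref{e:C}: when $|q-q'|\ge c\rho$ the bound follows from \eqref{e:A} and \eqref{e:B} alone (the oscillation of $D^3g$ over the common domain being controlled by $\|D^4g\|_{C^0}$, and $|q-q'|^\alpha$ being comparable to $\rho^\alpha$). When $|q-q'|=:r\le c\rho$ I would triangulate $D^3g(q)-D^3g'(q')$ through the $D^3$ of the scale-$r$ interpolations centred at $q$ and at $q'$ — whose domains overlap, the centres being at distance $r$ — using \eqref{e:B} at scale $r$ for the same-scale pair, a change-of-scale variant of \eqref{e:B} (between scales $\rho$ and $r$ at a fixed centre, obtained by the same argument) for the passage to scale $\rho$, and $\|D^4g\|_{C^0}\le C\rho^{\alpha-1}$ from \eqref{e:A} to absorb the ``fixed centre, evaluation points $r$ apart'' errors since $\rho^{\alpha-1}r\le r^\alpha$; this gives $|D^3g(q)-D^3g'(q')|\le C\,r^\alpha$. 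The step I expect to be the main obstacle is extracting the $C^0$-gain in \eqref{e:B} (and in its change-of-scale variant): one must see, through the chain mollification $\to$ harmonic replacement $\to$ rotation, why $g$ and $g'$ agree to order $\rho^{3+\alpha}$ rather than merely to the order $\rho^{2-\delta}$ at which each is individually close to $T$ — this is where the radiality of $\varphi$, the Poisson estimate for $\bar f-\hat f$, and the contraction properties of harmonic replacement are decisive, and it is what pins down the admissible range of $\delta$ and the value of $\alpha$. The rest of the bookkeeping is routine.
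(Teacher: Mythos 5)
There is a genuine gap, and it sits at the heart of the argument. Your derivation of \eqref{e:A} at a single scale does not work: the only regularity you feed in is the De Giorgi--type control $\|Df\|_{C^0(B_{6\rho})}\leq C\eps_0\rho^{1-\delta}$, $[Df]_{C^{0,1-\delta}}\leq C\eps_0$, and then maximum principle plus interior estimates for the harmonic replacement give at best $\|D^3\bar f\|_{C^0(B_{3\rho})}\leq C\rho^{-3}\|\hat f-\ell\|_{C^0(B_{4\rho})}\leq C\eps_0\rho^{-1-\delta}$ (and $\|D^4\bar f\|\leq C\eps_0\rho^{-2-\delta}$), since no affine $\ell$ can approximate a merely $C^{1,1-\delta}$ function better than $O(\rho^{2-\delta})$. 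This blows up as $\rho\to0$ and is far from $\|g\|_{C^3}\leq C$, $\|D^4 g\|\leq C\rho^{\alpha-1}$. The paper obtains \eqref{e:A} not at one scale but by a dyadic iteration (Lemma~\ref{l:beta}): one telescopes the differences of the harmonic replacements at scales $2^k\rho$, each difference being controlled in $C^\ell$ by $C r_k^{3+\alpha-\ell}$ through Corollary~\ref{c:alpha}, and the series converges for $\ell\leq3$. The same iteration is what produces the ``change-of-scale variant of \eqref{e:B}'' (the paper's \eqref{e:stima_beta}) that you invoke for \eqref{e:C}; it is not obtainable ``by the same argument'' as your same-scale comparison.

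The second, related gap is the source of the superquadratic gain, which you correctly flag as the main obstacle but attribute to the wrong mechanisms. The estimate $\|f-\bar f\|_{L^1(B_{4\rho})}\leq C\rho^{m+3+\alpha}$ (Proposition~\ref{p:L1}), which drives both the dyadic iteration and the $C^0$-closeness in \eqref{e:B}, comes from the \emph{minimality} of $T$: testing the first variation $\delta T(\chi)=0$ with vertical fields yields $|\int Df\cdot D\kappa|\leq C E^{1+\eta}\rho^m\|D\kappa\|_{L^\infty}$, hence $\|\Delta\hat f\|_{C^0}\leq CE^{1+\eta}\rho^{-1}\leq C\rho^{1+\lambda}$ and the $L^1$ bound \eqref{e:laplace0} of Lemma~\ref{l:laplace}. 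Radiality of $\varphi$ enters only to make the kernel $\Phi$ in the identity $\hat f-f=\int\nabla f\cdot\Phi(\cdot-w)$ a gradient, so that the first-variation estimate applies; there is no ``contraction property of harmonic replacement'' at work. Your proposal never uses the first variation anywhere, and without it mollification alone gives only $\|\Delta\hat f\|\leq C\eps_0\rho^{-\delta}$, so the order-$\rho^{3+\alpha}$ closeness of $g$ and $g'$ (and of interpolations across scales) is never established. By contrast, your interpolation of $\|D^\ell(g-g')\|$ between the $C^0$- and $C^4$-bounds, once the $C^0$-closeness and \eqref{e:A} are available, is fine and is a legitimate substitute for the paper's Taylor-polynomial argument via Lemma~\ref{l:poly}; and your reduction of \eqref{e:C} to same-scale comparisons at scale $R\sim|q-q'|$ matches the paper's scheme. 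But without the first-variation input and the multi-scale iteration, \eqref{e:A}, the $\ell=0$ case of \eqref{e:B}, and the change-of-scale step for \eqref{e:C} all remain unproven.
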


\subsection{Approximation scheme}
Let $5<n_0<k_0$ be natural numbers and consider the cube 
$Q=[-2^{-n_0}, 2^{-n_0}]^m$.
For $k\geq k_0$, we consider the usual subdivision
of $\R{m}$ into dyadic cubes of size $2\cdot 2^{-k}$, centered 
at points $c_i = 2^{-k} i \in 2^{-k} \Z^m$.
The corresponding closed cubes of the subdivision are then denoted by $Q_i$ and 
we consider below only those $Q_i$'s which have nonempty intersection 
with $Q$.

According to Corollary~\ref{c:decay_everywhere} and to the previous observations,
for every $c_i$ there exists a unique $u_i$ such that
$p_i = (c_i, u_i)\in \supp (T)\cap B_{1/16}$. Moreover,
for every constant $C$, if $k_0$ is large enough,
we can consider the $(p_i, C\, 2^{-k})$-interpolation $g_i$
for all $k\geq k_0$.

Let $\psi\in C^\infty_c ([-\textstyle{\frac{5}{4}},
\textstyle{\frac{5}{4}}]^m)$ be a nonnegative function such that,
if we define $\psi_i (q):= \psi (2^{k} (q-c_i))$, then
\[
\sum_{i\in \Z^{m}} \psi_i \equiv1\;\text{in}\;Q\, .
\]
Denote by $\mathcal{A}_i$ the set of indices $j$
such that $Q_j$ and $Q_i$ are adjacent. 
Note that the choice of $\psi$ guarantees $\psi_i\,\psi_j=0$ if $j\not\in \mathcal{A}_i$. 
Moreover the cardinality of $\mathcal{A}_i$ is (bounded by) a
dimensional constant independent of $k$ and, if $q\in Q_i$, then
in a neighborhood of $q$ we have
\begin{equation}\label{e:sum_der=0}
\sum_{j\in \mathcal{A}_i} \psi_j=1
\quad\text{and}\quad
\sum_{j\in \mathcal{A}_i} D^\ell \psi_j (q) = 0 \quad
\text{for all }\;\ell>0.
\end{equation}
We are now ready to state and prove the central theorem of this note.

\begin{theorem}\label{t:main}
There are dimensional constants $n_0<k_0$ with the following
properties.
Given an area-minimizing current $T$ as in (H) and $k\geq k_0$,
consider the functions $h_k : {Q} \to \R{n}$ given by
$h_k := \sum_i \psi_i\, g_i$. Then,
\begin{equation}\label{e:main_estimate}
\|h_k\|_{C^{3,\alpha}}\leq C,
\end{equation} 
for some dimensional constants $\alpha>0$ and $C$ (which, in particular,
do not depend on $k$).
Moreover, the graphs of $h_k$ converge, in the sense of currents, to 
$T\res (Q\times \R{n})\cap B_{1/2}$, thus implying that $T$ is a $C^{3,\alpha}$ graph in
a neighborhood of the origin.
\end{theorem}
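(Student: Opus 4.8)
The plan is to combine the uniform estimates of Proposition~\ref{p:main} with the partition of unity $\{\psi_i\}$ to get the bound \eqref{e:main_estimate}, and then to identify the limit of $\gr(h_k)$ with $T$ using the Lipschitz approximation of Proposition~\ref{p:approx}. First I would fix, for each $k$ and each index $i$ with $Q_i\cap Q\neq\emptyset$, the interpolation $g_i = g_{p_i, C2^{-k}}$, defined on a ball comfortably containing all the adjacent cubes $Q_j$, $j\in\mathcal{A}_i$; here the constant $C$ in the scale is chosen large enough (depending only on $m$) that whenever $q\in Q_i$ we have $B_{2^{-k}}(q)\subset$ the domain of every $g_j$ with $j\in\mathcal{A}_i$, so that $h_k$ is genuinely smooth near $q$ and the local identity $\sum_{j\in\mathcal{A}_i}\psi_j\equiv1$ from \eqref{e:sum_der=0} applies.

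For the $C^{3,\alpha}$ bound, the key trick is the one implicit in \eqref{e:sum_der=0}: since $\sum_{j\in\mathcal{A}_i}D^\ell\psi_j=0$ for $\ell>0$, near a point $q\in Q_i$ we may write, for $\ell\le 3$,
\[
D^\ell h_k(q) = \sum_{j\in\mathcal{A}_i} D^\ell\big(\psi_j (g_j - g_i)\big)(q) + D^\ell g_i(q),
\]
so that all derivatives of $h_k$ up to order $3$ are controlled by $\|g_i\|_{C^3}$ — bounded by \eqref{e:A} — plus terms in which at least one derivative hits a \emph{difference} $g_j-g_i$. Each such difference is estimated by \eqref{e:B} with $\rho\sim 2^{-k}$: a term with $a$ derivatives on $\psi_j$ (costing $2^{ak}$) and $b\le\ell-a$ derivatives on $g_j-g_i$ contributes at most $2^{ak}\cdot C\,2^{-k(b-3-\alpha)}\cdot\|\cdot\|\lesssim 2^{-k(\ell - 3 -\alpha)} = 2^{k(3+\alpha-\ell)}$, which for $\ell\le 3$ is bounded uniformly in $k$. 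Hence $\|h_k\|_{C^3(Q)}\le C$ uniformly. For the Hölder seminorm of $D^3 h_k$ one splits the two base points $q,q'$: if they lie in the same or adjacent cubes, one differentiates the identity above once more (four derivatives, $\ell=4$) and uses that \eqref{e:A}–\eqref{e:B} also bound $2^{-k}\|D^4 g_i\|_{C^0}$ and the corresponding differences, giving $\|D^4 h_k\|_{C^0}\le C\,2^{\alpha k}$ on each cube; combined with $|q-q'|\ge c\,2^{-k}$ in the far regime and the cube-to-cube comparison \eqref{e:C} of the third derivatives of the $g_i$'s at the centers $c_i$, a standard interpolation argument (controlling $|D^3 h_k(q)-D^3 h_k(q')|$ by $\|D^4 h_k\|_{C^0}|q-q'|$ when $|q-q'|\le 2^{-k}$ and by the telescoping of \eqref{e:C} plus the $C^3$ bound when $|q-q'|\ge 2^{-k}$) yields $[D^3 h_k]_{C^{0,\alpha}}\le C$. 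This gives \eqref{e:main_estimate}.

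For the convergence $\gr(h_k)\to T\res(Q\times\R{n})\cap B_{1/2}$, the idea is that each $g_i$, hence $h_k$, is an $O(2^{-k(1+\beta)})$-perturbation of the sheet of $T$. Concretely, by Corollary~\ref{c:decay_everywhere} the current $T$ near $p_i$ is, in the coordinates adapted to $\pi_{p_i}$, the graph of the function $f_i$ supplied by Proposition~\ref{p:approx} on a set $K$ with $|B\setminus K|\lesssim 2^{-km}E_i^{1+\eta}$ and $E_i\lesssim \eps_0^2 2^{-k(2-2\delta)}$; the mollification, harmonic replacement, and change of coordinates leading from $f_i$ to $g_i$ move the graph by at most $C\,2^{-k}E_i^{\eta}$ in $C^0$ on $B_\rho(p_i)$ — this is essentially the content of the remark preceding Proposition~\ref{p:main}, refined to a quantitative rate $|g_i - (\text{sheet of }T)|\le C\,\rho^{1+\gamma}$ for some $\gamma>0$. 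Summing with the partition of unity, $\|h_k - f_\ast\|_{C^0(Q)}\le C\,2^{-k\gamma}\to 0$ where $f_\ast$ is the function whose graph is $T\cap(Q\times\R{n})\cap B_{1/2}$ (which exists as a continuous, indeed by Theorem~\ref{t:degiorgi} a $C^{1,\beta}$, function by Corollary~\ref{c:decay_everywhere}(b)–(c)). Uniform convergence of graphs of equi-Lipschitz functions implies convergence as currents, so the limit of $\gr(h_k)$ is $T\res(Q\times\R{n})\cap B_{1/2}$. Since the $h_k$ are uniformly bounded in $C^{3,\alpha}$, Arzelà–Ascoli gives a subsequence converging in $C^3$ to a $C^{3,\alpha}$ function, which must coincide with $f_\ast$; hence $T$ is a $C^{3,\alpha}$ graph near the origin.

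The main obstacle I anticipate is not the $C^3$ bound — that is a mechanical consequence of \eqref{e:ABC} and the cancellation \eqref{e:sum_der=0} — but rather making the convergence step genuinely quantitative: one must show that the three operations $f\mapsto \hat f = f\ast\varphi_\rho$, $\hat f\mapsto\bar f$ (harmonic replacement), and $\bar f\mapsto g$ (rotation of coordinates) each perturb the graph only by $o(\rho)$, in fact by $O(\rho^{1+\gamma})$, \emph{despite} the error set $B\setminus K$ of Proposition~\ref{p:approx} where $\gr(f)\neq T$. This requires controlling the oscillation of $f$ on $B\setminus K$ via the Lipschitz bound and the smallness of $|B\setminus K|$, plus an $L^1$-to-$C^0$ comparison for the harmonic replacement (using that harmonic functions minimize Dirichlet energy and the energy gap \eqref{e:approx3}), and finally the $L^1$ bound \eqref{e:claim} from Lemma~\ref{l:rotation}(b) to pass the estimate through the change of coordinates. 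Assembling these with uniform constants — independent of $k$ and of the index $i$ — is where the real work lies, though all the needed inputs are available from the results already stated.
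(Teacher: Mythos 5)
Your derivation of the estimate \eqref{e:main_estimate} is essentially the paper's own: the cancellation \eqref{e:sum_der=0} to replace $g_j$ by $g_j-g_i$ wherever a derivative hits $\psi_j$, the inputs \eqref{e:A}--\eqref{e:B} at scale $\rho\sim 2^{-k}$ for the $C^3$ bound and for $\|D^4h_k\|_{C^0}$, and the two-regime argument (fourth-derivative interpolation for nearby points, \eqref{e:C} at the cube centers for far points) for the H\"older seminorm. Where you genuinely diverge is the convergence of the graphs: the paper argues softly --- Arzel\`a--Ascoli gives a $C^3$ limit $h$, the estimates behind the Remark after $(I_3)$ show that $\supp\big(T\res(Q\times\R{n})\big)\cap B_{1/2}$ is contained in $\gr(h)$, and the Constancy Theorem together with $\proj_\# T=\a{B}$ and $\theta(T,0)=1$ identifies $T$ with the multiplicity-one graph of $h$ --- whereas you first identify $T$ as the multiplicity-one graph of a Lipschitz function $f_*$ (via Corollary~\ref{c:decay_everywhere}(b)--(c), i.e.\ leaning on Theorem~\ref{t:degiorgi}, which the paper tries to use sparingly) and then prove quantitative uniform convergence $h_k\to f_*$ with a rate, upgrading to $C^3$ convergence by the uniform bound. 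Your route is viable: the perturbation estimates you flag as the ``real work'' are indeed obtainable from what is in the paper ($|f-f_*|\lesssim \rho\,E^{(1+\eta)/m}$ from the Lipschitz bounds and the smallness of $B\setminus K$; $\|\hat f-f\|_{C^0}\lesssim E^\eta\rho$; $\|\bar f-\hat f\|_{C^0}\lesssim \rho^2\|\Delta\hat f\|_{C^0}\lesssim\rho^{3+\lambda}$ by \eqref{e:laplace} and the maximum principle, which is simpler than the Dirichlet-energy comparison you propose; Lemma~\ref{l:rotation} for the change of coordinates), and in fact even the crude bound $\|h_k-f_*\|_{C^0}\lesssim 2^{-k}$ already suffices for the convergence. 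What your version buys is an explicit rate and no appeal to the Constancy Theorem in the limit; what it costs is the extra quantitative bookkeeping and the prior identification of $T$ with $\gr(f_*)$ (which is where a constancy-type argument is still hiding, exactly as in the paper's last lines). Two small slips, neither fatal: the bound you state, $\|D^4h_k\|_{C^0}\leq C\,2^{\alpha k}$, is not what \eqref{e:A}--\eqref{e:B} give --- the correct bound is $C\,2^{k(1-\alpha)}$, and the interpolation $|D^3h_k(q)-D^3h_k(q')|\leq\|D^4h_k\|_{C^0}|q-q'|\leq C|q-q'|^\alpha$ for $|q-q'|\lesssim 2^{-k}$ still goes through with it; and uniform convergence of equi-Lipschitz graphs alone does not give convergence of the graph currents without a further argument, but your uniform $C^{3,\alpha}$ bound plus Arzel\`a--Ascoli (which you do invoke) closes that gap.
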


\begin{proof}[Proof of Theorem~\ref{t:main}]
Given $k$, consider a cube $Q_i$ of the corresponding dyadic decomposition and a point $q\in Q_i$.
We already observed that, in a neighborhood of $q$, 
$h_k = \sum_{j\in \mathcal{A}_i} \psi_j g_j$.
Moreover, from the definition, we have that
\begin{equation}\label{e:psi}
\|D^\ell \psi_j\|_{C^0}= 2^{k\ell}\, \|D^\ell\psi\|_{C^0}= C_\ell\, 2^{k\ell} \quad\text{for every }\;\ell\in\N.
\end{equation}
The $C^0$ estimate of $h_k$ follows trivially from \eqref{e:A}, since
\[
|h_k (q)|\leq \sum_{j\in\mathcal{A}_i} \|\psi_j\|_{C^0}\|g_j\|_{C^0}
\leq C.
\]
As for the $C^1$ estimate, we write the first derivative of $h_k$ as follows,
\[
D h_k (q) = \sum_{j\in \mathcal{A}_i} \big( D \psi_j (q) g_j (q) +
\psi_j (q) D g_j (q)\big) \stackrel{\eqref{e:sum_der=0}}{=} \sum_{j\in \mathcal{A}}
\big( D \psi_j (q) (g_j (q)- g_i (q)) + \psi_j (q) D g_j (q)\big),
\]
from which, using \eqref{e:A}, \eqref{e:B} and \eqref{e:psi}, we deduce
\[
|D h_k (q)|\leq \sum_{j\in \mathcal{A}_i} \big(\|D\psi_j\|_{C^0} \|g_i -g_j\|_{C^0}
+ \|\psi_j\|_{C^0} \|Dg_j\|_{C^0}\big)\leq C.
\]
With analogous computations, we obtain
\begin{align*}
|D^2 h_k (q)|\leq{}& \sum_{j\in\mathcal{A}_i} \big(\|D^2 \psi_j\|_{C^0}
\|g_i-g_j\|_{C^0} + \|D \psi_j\|_{C^0} \|Dg_j - Dg_i\|_{C^0}+\|\psi_j\|_{C^0} \|D^2 g_j\|_{C^0}\big)\leq C,\\
|D^3 h_k (q)|\leq{}& \sum_{j\in\mathcal{A}_i} \big(\|D^3 \psi_j\|_{C^0}
\|g_j-g_i\|_{C^0} + \|D^2 \psi_j\|_{C^0} \|Dg_j - Dg_i\|_{C^0}+\\
&+ \|D \psi_j\| \|D^2 g_j - D^2 g_i\|_{C^0} + \|\psi_j\|_{C^0} \|D^3 g_j\|_{C^0}\big)\leq C,\\
|D^4 h_k (q)|\leq{}& \sum_{j\in\mathcal{A}_i} \big(\|D^4 \psi_j\|_{C^0}
\|g_j-g_i\|_{C^0} + \|D^3 \psi_j\|_{C^0} \|Dg_j - Dg_i\|_{C^0}+\\
&+ \|D^2 \psi_j\| \|D^2 g_j - D^2 g_i\|_{C^0} + \|D\psi_j\|_{C^0} \|D^3 g_j-D^3g_i\|_{C^0}+\|\psi_j\|_{C^0} \|D^4 g_j\|_{C^0}\big)\\
\leq {}& C\,2^{k(1-\alpha)},
\end{align*}
where $C$ is a constant independent of $k$.

Now, let $q,q'\in B_{1/2}$ and consider the cubes $Q_i$ and $Q_j$ such that
$q\in Q_i$ and $q'\in Q_j$.
If the two cubes are adjacent, then we have $|q-q'|\leq C 2^{-k}$ and, therefore,
\[
|D^3 h_k (q) - D^3 h_k (q')|\leq \|D^4 h_k\|_{C^0} |q-q'|\leq C \,2^{k(1-\alpha)}\, |q-q'| \leq C\, |q-q'|^\alpha.
\]
If $Q_i$ and $Q_j$ are not adjacent, then $2\,|q-q'|\geq \max\{|c_i-c_j|, 2^{-k}\}$.
Since $\supp(\psi)\subset [-\textstyle{\frac{5}{4}},
\textstyle{\frac{5}{4}}]^m$, $D^3 h_k (c_i)= D^3 g_i (c_i)$ for every $i$ and
from \eqref{e:C} it follows that
\begin{align*}
|D^3 h_k (q)-D^3 h_k (q')| \leq{}&
|D^3 h_k (q)-D^3 h_k (c_i)| + |D^3 g_i (c_i)-D^3 g_j (c_j)|+\\
&+ |D^3 h_k (c_j) - D^3 h_k (q')|\\
\leq{}& C \,2^{-k} \|D^4 h_k\|_{C^0}
+ C |c_j-c_i|^\alpha
\leq C 2^{-k\alpha} + C |c_i-c_j|^\alpha\leq C |q-q'|^\alpha.
\end{align*}
This concludes the proof of \eqref{e:main_estimate}.
We finally come to the convergence of the graphs of $h_k$ in the sense of currents.
Obviously, by compactness we can assume that a subsequence of $h_k$
(not relabelled) converges in
the $C^3 (Q)$ norm to some limiting $C^{3,\alpha}$ function $h$. 
On the other hand, by Corollary \ref{c:decay_everywhere}
and Proposition \ref{p:approx}, it follows easily that the support of
$T\res (Q\times\R{n})\cap B_{1/2}$ is contained in the graph of $h$.
But then, by the Constancy Theorem, $T\res (Q\times\R{n})\cap B_{1/2}$
must coincide with an integer multiple of the graph of $h$. Our assumptions
imply easily that the multiplicity is necessarily $1$.
\end{proof}

\section{$L^1$-estimate}
The rest of the paper is devoted to the proof of Proposition~\ref{p:main}.
A fundamental point is
an estimate for the $L^1$ distance between the harmonic function $\bar f$
introduced in step ($I_2$) of the approximation scheme
and the function $f$ itself.
A preliminary step is the following estimate on the Laplacian of $\hat f$, which
is a simple consequence of the first variation formula for area-minimizing currents.

\begin{lemma}\label{l:laplace}
There exists $\delta, \gamma, C, \lambda>0$ such that, if
$(p,8\rho, \pi)$ is admissible and $\hat{f}$ is as in ($I_1$), then
\begin{equation}\label{e:laplace}
\|\Delta \hat{f}\|_{C^0 (B^m_{5\rho})}\;\leq\; C \rho^{1+\lambda},
\end{equation}
\begin{equation}\label{e:laplace0}
\int_{B_{5\rho}}\bigg\vert \int_{B_\rho(w)} Df(z)\cdot D\gamma 
(w-z) \,dz\bigg\vert\,dw\leq 
C\, E^{1+\eta}\,\rho^m\,\|D\gamma\|_{L^1},
\quad\forall\gamma\in C_c^1(B_{\rho},\R{n}),
\end{equation}
where $\eta$ is the constant in Proposition~\ref{p:approx}
and $E =\E (T, B_{8\rho} (p), \pi)$.
\end{lemma}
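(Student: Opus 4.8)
The plan is to deduce both estimates of Lemma~\ref{l:laplace} from a single ``almost-harmonicity'' inequality for the Lipschitz approximation $f$, obtained by testing the stationarity of the area-minimizing current $T$ against \emph{vertical} vector fields and then using the fine estimates of Proposition~\ref{p:approx}. Throughout, write $r:=8\rho$ and let $f\colon B^m_{6\rho}(q)\to\pi^\perp$, $K\subset B^m_s$ with $s=r(1-CE^\eta)>6\rho$, and $E$ denote, respectively, the Lipschitz approximation of $S\res\cyl^\pi_r(q)$ provided by Proposition~\ref{p:approx}, its coincidence set, and the excess $\E(T,B_{8\rho}(p),\pi)$ (comparable, up to a dimensional factor, to the cylindrical excess over $\cyl^\pi_r(q)$ used in Proposition~\ref{p:approx}); by admissibility of $(p,8\rho,\pi)$ and Definition~\ref{d:admissible}, $E\leq C\,\eps_0^2\,\rho^{2-2\delta}$. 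From \eqref{e:approx1}--\eqref{e:approx2} I will use $\Lip(f)\leq CE^\eta$, $|B_{6\rho}\setminus K|\leq C\rho^m E^{1+\eta}$ and $\gr(f|_K)=T\res(K\times\R{n})$; and from \eqref{e:approx3}, both the sharp Dirichlet bound $\int_{B_{6\rho}}|Df|^2\leq C\rho^m E$ (combining \eqref{e:approx3} with $\|T\|(\cyl^\pi_t)-\omega_m t^m=\tfrac12\int_{\cyl^\pi_t}|\vec T-\vec\pi|^2\,d\|T\|$, which is non-decreasing in $t$) and the mass bound $\|T\|\bigl((B_{6\rho}\setminus K)\times\R{n}\bigr)\leq C\rho^m E^{1+\eta}$ (obtained by comparing $\|T\|(\cyl^\pi_s)$ with $\cH^m(\gr(f|_K))\geq|K|+\tfrac12\int_K|Df|^2$ through \eqref{e:approx3}). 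This last inequality is exactly where the exponent $1+\eta$ --- rather than just $1$ --- in \eqref{e:approx2}--\eqref{e:approx3} is used.

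First I would prove the key inequality: for every $\psi\in C^1_c(B^m_{6\rho}(q),\R{n})$,
\begin{equation}\label{e:almostharm}
\Bigl|\int_{B^m_{6\rho}(q)}Df:D\psi\Bigr|\;\leq\;C\int_{B^m_{6\rho}(q)}|Df|^3\,|D\psi|\;+\;C\int_{B^m_{6\rho}(q)}|D\psi(z)|\,d\mu(z),
\end{equation}
where $\mu:=\proj^\pi_\#\bigl(\|T\|\res((B_{6\rho}\setminus K)\times\R{n})\bigr)+\Lip(f)\,\cL^m\res(B_{6\rho}\setminus K)$ is a nonnegative measure on $B^m_{6\rho}(q)$, independent of $\psi$, with $\mu(B^m_{6\rho}(q))\leq C\rho^m E^{1+\eta}$. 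To get \eqref{e:almostharm}, test the first variation of $T$ (which vanishes since $T$ is area-minimizing and $\partial S=0$ in $\cyl^\pi_r(q)$) with the vertical field $X=(0,\psi)$ --- after the standard cutoff in the fibre directions, harmless by the height bound on $\supp T\cap\cyl^\pi_r(q)$ --- split $T=T\res(K\times\R{n})+T\res((B_{6\rho}\setminus K)\times\R{n})$, use $T\res(K\times\R{n})=\gr(f|_K)$, and expand the minimal surface operator in weak form on $\gr f$: its linear part contributes $\int Df:D\psi$, the nonlinear remainder is bounded in absolute value by $C\int_{B_{6\rho}}|Df|^3|D\psi|$, and the two ``bad-set'' contributions are bounded crudely by the two pieces of $\mu$. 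Note, for later use, that $\int_{B_{6\rho}}|Df|^3\leq\Lip(f)\int_{B_{6\rho}}|Df|^2\leq CE^\eta\cdot C\rho^m E=C\rho^m E^{1+\eta}$.

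Both conclusions then follow by suitable choices of $\psi$. Since $\hat f=f*\varphi_\rho$, one has $\Delta\hat f(w)=\int Df(z)\cdot D\varphi_\rho(w-z)\,dz=-\int_{B^m_{6\rho}}Df:D\psi_w$ with $\psi_w(z):=\varphi_\rho(w-z)\in C^\infty_c(B^m_\rho(w))$; applying \eqref{e:almostharm} with $\psi=\psi_w$ and bounding $|D\psi_w|\leq\|D\varphi_\rho\|_{C^0}\leq C\rho^{-m-1}$ gives, for every $w\in B^m_{5\rho}$,
\[
|\Delta\hat f(w)|\;\leq\;C\rho^{-m-1}\Bigl(\int_{B_{6\rho}}|Df|^3+\mu(B^m_{6\rho})\Bigr)\;\leq\;C\rho^{-1}E^{1+\eta}\;\leq\;C\,\eps_0^{2(1+\eta)}\,\rho^{(1+\eta)(2-2\delta)-1},
\]
which is \eqref{e:laplace} with $\lambda:=(1+\eta)(2-2\delta)-2>0$; this forces $\delta<\tfrac{\eta}{2(1+\eta)}$, and is one of the points at which (part of) the choice of $\delta$ is pinned down. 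For \eqref{e:laplace0} the kernel must be kept inside: applying \eqref{e:almostharm} with $\psi=\psi_w$, $\psi_w(z):=\gamma(w-z)$ for $\gamma\in C^1_c(B^m_\rho,\R{n})$, gives
\[
\Bigl|\int_{B_\rho(w)}Df(z)\cdot D\gamma(w-z)\,dz\Bigr|\leq C\int|Df(z)|^3|D\gamma(w-z)|\,dz+C\int|D\gamma(w-z)|\,d\mu(z);
\]
integrating in $w$ over $B^m_{5\rho}$ and applying Fubini with $\int_{B^m_{5\rho}}|D\gamma(w-z)|\,dw\leq\|D\gamma\|_{L^1}$ (for every $z$), the right-hand side becomes $C\|D\gamma\|_{L^1}\bigl(\int_{B_{6\rho}}|Df|^3+\mu(B^m_{6\rho})\bigr)\leq C\rho^m E^{1+\eta}\|D\gamma\|_{L^1}$, which is \eqref{e:laplace0}.

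The manipulations in the last two paragraphs are routine; the real work lies in \eqref{e:almostharm} together with the two quantitative inputs $\int_{B_{6\rho}}|Df|^2\leq C\rho^m E$ and $\|T\|((B_{6\rho}\setminus K)\times\R{n})\leq C\rho^m E^{1+\eta}$, both of which must be extracted from \eqref{e:approx2}--\eqref{e:approx3} using the ``$+\eta$'' improvement rather than the crude $\Lip(f)$ bounds. The one pitfall in \eqref{e:laplace0} is pulling $\|D\gamma\|_{C^0}$ out of the inner integral before integrating in $w$: that route yields only $C\rho^{2m}E^{1+\eta}\|D\gamma\|_{C^0}$, which is a weaker estimate than the desired $C\rho^m E^{1+\eta}\|D\gamma\|_{L^1}$ and does not imply it; the Fubini step above is what recovers the correct $L^1$ norm.
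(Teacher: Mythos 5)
Your proposal is correct and follows essentially the same route as the paper: both hinge on testing the vanishing first variation with vertical fields $(0,\psi)$ and comparing $\delta T$ with $\delta(\gr f)$ via the Taylor expansion of the area integrand, yielding exactly the paper's inequality \eqref{e:variation} (your ``almost-harmonicity'' estimate, with the bad-set contributions packaged into a single measure), after which \eqref{e:laplace} and \eqref{e:laplace0} follow by plugging in mollified/translated test functions and Fubini, just as in the paper. The only differences are cosmetic: you compute $\Delta\hat f$ pointwise as a convolution instead of the paper's $L^1$-duality formulation, and your stated threshold $\delta<\eta/(2(1+\eta))$ is more restrictive than the actual requirement $\delta<\eta/(1+\eta)$, which is harmless since $\delta$ is free to be chosen small.
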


\begin{proof}
Let $\mu$ be the measure defined by $\mu(A):=\|T\|(A\times\pi^{\perp})$.
We start showing that the approximation $f$ given by
Proposition~\ref{p:approx} satisfies
\begin{equation}\label{e:variation}
\left\vert\int Df\cdot D\kappa\right\vert
\leq C\int |D\kappa|\,|Df|^3\,dx+C\int |D\kappa|\,{\bf 1}_{B_{6\rho}\setminus K}\, (dx+ d\mu(x)),
\end{equation}
for every $\kappa\in C_c^1(B_{6\rho},\R{n})$.
Consider the vector field $\chi(x,y)=(0,\kappa (x))$.
From the minimality of the current $T$, we infer that the first variation of the mass in direction $\chi$
vanishes, $\delta T(\chi)=0$.
We set $T_f=\gr(f)$. Since $\delta T (\chi)=0$, we get
\begin{equation}\label{e:variation2}
\left\vert\int Df\cdot D\ph\right\vert\leq
\left\vert\int Df\cdot D\ph-\delta T_f(\chi)\right\vert+
\left\vert \delta T(\chi)-\delta T_f(\chi)\right\vert.
\end{equation}
The first variation $\delta T_f (\chi)$ is given by the formula
\begin{align*}
\int_{\cyl_{6\rho}}\dv_{\vec T_f}\chi \, d\|T_f\|&=\frac{d}{ds}\Big\vert_{s=0}
\int_{B_{6\rho}}\sqrt{1+|D f+sD \kappa|^2+\textstyle{\sum_{|\alpha|\geq2}}M_\alpha(D f+sD \kappa)^2}\,dx\\
&=\int_{B_{6\rho}}\frac{D f\cdot D\kappa+\textstyle{\sum_{|\alpha|\geq2}}M_\alpha(D f)\left.\frac{d}{ds}\right|_{s=0} M_{\alpha}(D f+sD \kappa)}{\sqrt{1+|D f|^2+\sum_{|\alpha|\geq2}M_\alpha(D f)^2}}.
\end{align*}
It follows then that
\begin{align*}
\left\vert\int_{\cyl_{6\rho}}\hspace{-0.2cm}\dv_{\vec T_f}\chi \, d\|T_f\|-
\int_{B_{6\rho}}\hspace{-0.2cm}Df\cdot D\kappa\right\vert\leq{}&
\int_{B_{6\rho}}\hspace{-0.2cm}|D f| |D\kappa|\left(\sqrt{1+|D f|^2+\textstyle{\sum_{|\alpha|\geq 2}} M_\alpha(D f)^2}-1\right)+\\
& +\left\vert\int_{B_{6\rho}}\textstyle{\sum_{|\alpha|\geq2}}M_\alpha(D f)\left.\frac{d}{ds}\right|_{s=0} M_{\alpha}(D f+sD \kappa)\right|\\
\leq {}& C\int_{B_{6\rho}}|D\kappa|\,|Df|^3.
\end{align*}
We next estimate the second term in the right hand side of 
\eqref{e:variation2}:
\begin{align*}
\big\vert \delta T(\chi)-\delta T_f(\chi)\big\vert&\leq
\int_{B_{6\rho}\setminus K\times\R{n}}\dv_{\vec T}\chi \, d\|T\|
+\int_{B_{6\rho}\setminus K\times\R{n}}\dv_{\vec T_f}\chi \, d\|T_f\|\\
&\leq \int_{}{\bf 1}_{B_{6\rho}\setminus K}(x)|D\kappa|(x)\, d\mu(x)
+ C\int_{}{\bf 1}_{B_{6\rho}\setminus K}(x)|D\kappa|(x) \, dx, 
\end{align*}
where we have used
the Lipschitz bound on $f$ to estimate the second integral in the right hand
side of the first line.
This concludes the proof of \eqref{e:variation}.

We now come to the proof of \eqref{e:laplace}.
From \eqref{e:variation} and Proposition~\ref{p:approx}, it follows straightforwardly that
\begin{equation}\label{e:step1}
\left\vert\int_{B_{6\rho}}D f\cdot D\kappa\right\vert\leq C\, E^{1+\eta}\,\rho^m\,\|D\kappa\|_{L^\infty},
\quad\text{for every }\;\kappa\in C_c^1(B_{6\rho},\R{n}).
\end{equation}
Then, putting together the previous estimates, we conclude that
\begin{align*}
\|\Delta \hat f\|_{L^{\infty}(B_{5\rho})}&=\sup_{\gamma\in C^1_c(B_{5\rho}),\|\gamma\|_{L^1}\leq1}\int D\hat f\cdot D\gamma
=\sup_{\gamma\in C^1_c(B_{5\rho}),\|\gamma\|_{L^1}\leq1}
\int D f\cdot D(\gamma*\ph_{\rho})\\
&\stackrel{\mathclap{\eqref{e:step1}}}{\leq}\sup_{\gamma\in C^1_c(B_{5\rho}),\|\gamma\|_{L^1}\leq1} C\,E^{1+\eta}\rho^m\,\|D(\gamma*\ph_\rho)\|_{L^\infty}
\leq C\, E^{1+\eta}\,\rho^m\,\|D\ph_{\rho}\|_{L^\infty}\\
&\leq C\, E^{1+\eta}\, \rho^{-1}\leq C\,\rho^{(2-2\delta)(1+\eta)-1}.
\end{align*}
Therefore, \eqref{e:laplace} follows choosing $\delta$ sufficiently small
with respect to $\eta$.

For the proof of \eqref{e:laplace0}, it is enough to notice that, from
\eqref{e:variation} and Proposition~\ref{p:approx}, we get
\begin{align*}
&\int_{B_{5\rho}}\left\vert \int_{B_{\rho} (w)} Df(z)\cdot D\gamma (w-z)
\,dz\right\vert\,dw\\
\leq& C\int_{B_{5\rho}} 
|D\gamma|*|Df|^3+C\int_{B_{5\rho}} |D\gamma|*{\bf 1}_{\R{m}\setminus
K}+C\int_{B_{5\rho}} |D\gamma|*\big( \mu\res(\R{m}\setminus K)\big)\\
\leq& \|D\gamma\|_{L^1} \left( C E^\eta\,\int_{B_{6\rho}}|Df|^2+ |B_{6\rho}\setminus K|
+ \mu(B_{6\rho}\setminus K)\right)
\leq C\, E^{1+\eta}\,\rho^m\,\|D\gamma\|_{L^1}.
\end{align*}
\end{proof}

Now we come to the $L^1$-estimate for the harmonic approximation $\bar f$.

\begin{propos}\label{p:L1}
Let $(p,8\rho,\pi)$ be admissible and $\bar{f}$ be as in ($I_2$).
Then, there exists $\alpha>0$ such that
\begin{equation}\label{e:L1prov}
\|\bar f-f\|_{L^1(B_{4\rho})}\leq  C\, \rho^{m+3+\alpha}.
\end{equation}
\end{propos}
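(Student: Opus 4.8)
\section*{Proof proposal for Proposition~\ref{p:L1}}

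The plan is to interpose the mollification $\hat f=f*\varphi_\rho$ between $f$ and $\bar f$, splitting
\[
\bar f-f=(\bar f-\hat f)+(\hat f-f),
\]
and to bound the two terms separately: the first exploits that $\hat f$ is almost harmonic (Lemma~\ref{l:laplace}), the second that $f$ is almost harmonic in the averaged sense of \eqref{e:laplace0}. Throughout, admissibility of $(p,8\rho,\pi)$ gives $E:=\E(T,B_{8\rho}(p),\pi)\le C\,\eps_0^2\,\rho^{2-2\delta}$, so that, once $\delta$ is chosen small compared with the constant $\eta$ of Proposition~\ref{p:approx}, one has $E^{1+\eta}\rho^{m+j}\le C\,\rho^{m+j+\alpha_0}$ for some $\alpha_0>0$ (the same mechanism that produces the exponent $\lambda$ in \eqref{e:laplace}). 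This is the source of the final gain $\alpha$.

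For the term $\bar f-\hat f$: it vanishes on $\partial B^m_{4\rho}(q)$ and solves $-\Delta(\bar f-\hat f)=\Delta\hat f$ on $B^m_{4\rho}(q)$. Comparing each of its components with the barrier $x\mapsto\|\Delta\hat f\|_{L^\infty(B_{4\rho})}\big((4\rho)^2-|x-q|^2\big)/(2m)$ via the maximum principle, and then invoking \eqref{e:laplace}, yields
\[
\|\bar f-\hat f\|_{L^\infty(B_{4\rho})}\le C\,\rho^2\,\|\Delta\hat f\|_{L^\infty(B_{4\rho})}\le C\,\rho^{3+\lambda},
\]
hence $\|\bar f-\hat f\|_{L^1(B_{4\rho})}\le C\,\rho^{m+3+\lambda}$.

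For the term $\hat f-f$: the key point is that the mollification error is not just of size $\Lip(f)\,\rho$, but can be written as $Df$ convolved against a kernel whose $L^1$--norm is only of order $\rho$, because $\varphi$ is radial with unit mass, so $\varphi_\rho-\delta_0$ has vanishing zeroth and first moments. Let $\beta\in W^{1,1}_0(B^m_1)$ solve $\Delta\beta=\varphi-\delta_0$ (explicitly $\beta=N*\varphi-N$ with $N$ the fundamental solution of the Laplacian; since $\varphi$ is radial with unit mass the mean value property gives $N*\varphi=N$ outside $B_1$, whence $\supp\beta\subset B_1$), and set $\beta_\rho(z):=\rho^{2-m}\beta(z/\rho)$, so that $\Delta\beta_\rho=\varphi_\rho-\delta_0$, $\supp\beta_\rho\subset B_\rho$ and $\|D\beta_\rho\|_{L^1}=\rho\,\|D\beta\|_{L^1}\le C\rho$. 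Pairing $\varphi_\rho-\delta_0=\Delta\beta_\rho$ (in the $z$ variable) against $f(w-\cdot)$ and integrating by parts once yields, for every $w\in B^m_{5\rho}(q)$,
\[
\hat f(w)-f(w)=\int_{B_\rho(w)}Df(z)\cdot D\beta_\rho(w-z)\,dz.
\]
Since $Df\in L^\infty$, the right-hand side depends continuously on $\beta_\rho$ in the $W^{1,1}$ norm, uniformly in $w$, so we may approximate $\beta_\rho$ by functions in $C^1_c(B_\rho)$ and apply \eqref{e:laplace0}:
\[
\|\hat f-f\|_{L^1(B_{4\rho})}\le\|\hat f-f\|_{L^1(B_{5\rho})}\le C\,E^{1+\eta}\rho^m\,\|D\beta_\rho\|_{L^1}\le C\,E^{1+\eta}\rho^{m+1}\le C\,\rho^{m+3+\alpha_0}.
\]
Adding the two bounds proves \eqref{e:L1prov} with $\alpha:=\min\{\lambda,\alpha_0\}>0$.

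I expect the main obstacle to be this second step. The naive estimate $|\hat f-f|\le\Lip(f)\,\rho\le C\,E^\eta\rho$ is too lossy: together with $E\le C\eps_0^2\rho^{2-2\delta}$ it would require $(2-2\delta)\eta\ge 2$, which is false. One must instead extract the second-order cancellation of the mollifier, i.e.\ realize $\hat f-f$ as a convolution of $Df$ with the kernel $D\beta_\rho$ of $L^1$--norm $\sim\rho$, in order to bring \eqref{e:laplace0} into play. The only technical nuisance is that $\beta_\rho$ is merely $W^{1,1}$ — its gradient having an integrable $|z|^{1-m}$ singularity at the origin — so the use of \eqref{e:laplace0}, which is stated for $C^1_c$ test functions, is preceded by a routine density argument.
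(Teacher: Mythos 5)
Your proposal is correct and takes essentially the same route as the paper: the same splitting $\bar f-f=(\bar f-\hat f)+(\hat f-f)$, with the first term controlled through \eqref{e:laplace} and the second by the key observation that $\hat f-f$ can be written as a convolution of $Df$ with a compactly supported gradient kernel of $L^1$-norm $\sim\rho$, to which \eqref{e:laplace0} applies, the conclusion then following from $E\leq C\,\eps_0^2\,\rho^{2-2\delta}$ with $\delta$ small relative to $\eta$. The only (harmless) variations are that the paper bounds $\bar f-\hat f$ via Poincar\'e and an integration by parts rather than a maximum-principle barrier, and constructs the kernel as $\Phi(w)=w\int_0^1\varphi_\rho(w/\sigma)\,\sigma^{-m-1}\,d\sigma$ (a curl-free field with the same integrable $|w|^{1-m}$ singularity as your $D\beta_\rho$) instead of your Newtonian potential $N*\varphi_\rho-N$.
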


\begin{proof}
First we estimate the $L^1$ distance between $\bar f$ and $\hat f$.
Using the Poincar\'e inequality and a simple integration by parts, we infer that
\begin{equation*}
\|\bar f-\hat{f}\|_{L^1 (B_{4\rho})}^2\leq C \, \rho^{m+2}\, 
\|\nabla (\bar f-\hat{f})\|_{L^2(B_{4\rho})}^2
= C \,\rho^{m+2} \int_{B_{4\rho}} \Delta \hat{f} \,(\bar{f} - \hat{f}),
\end{equation*}
from which
\begin{equation*}
\|\hat{f}-\bar{f}\|_{L^1 (B_{4\rho})}\leq
C \,\rho^{2+m} \,\|\Delta \hat{f}\|_\infty
\stackrel{\eqref{e:laplace}}{\leq} C \,\rho^{m+3+\lambda}.
\end{equation*}
In order to prove \eqref{e:L1prov}, then it is enough to prove the following inequality,
\begin{equation}\label{e:Linfty}
\|\hat{f} - f\|_{L^1 (B_{4\rho})} \leq C \rho^{m+3+\alpha}.
\end{equation}
For every $z\in B_{4\rho}$, from the definition of $\hat{f}$ we have
\begin{equation}\label{e:explicit}
\hat{f} (z)-f (z) = \int \varphi_\rho (z-y) (f(y)-f (z))\, dy.
\end{equation}
To simplify the notation assume $z=0$ and rewrite \eqref{e:explicit} as
\begin{align*}
\hat{f} (0) -f (0) &= \int \varphi_\rho (y) \int_0^{|y|}
\frac{\partial f}{\partial r} \left( \tau \frac{y}{|y|}\right)\, d\tau\, dy
= \int \varphi_\rho (y) \int_0^{|y|} \nabla f \left(\tau\frac{y}{|y|}\right)
\cdot \frac{y}{|y|}\, d\tau\, dy\notag\\
&= \int \varphi_\rho (y) \int_0^1 \nabla f (\sigma y) \cdot y\, d\sigma\, dy
= \int \int_0^1 \varphi_\rho \left(\frac{w}{\sigma}\right)\, \nabla f (w) \cdot
\frac{w}{\sigma^{m+1}}\, d\sigma\, dw\notag\\
&= \int \nabla f (w) \cdot \underbrace{w \left(\int_0^1 \varphi_\rho
\left(\frac{w}{\sigma}\right)\,\sigma^{-m-1}\, d\sigma\right)}_{=: \Phi (w)}\, dw .
\end{align*}
More generally, for every $z\in B_{4\rho}$, we have
$\hat{f} (z) -f (z)= \int \nabla f (w) \cdot \Phi (w-z)\, dw$
and
\begin{equation*}
\|\hat f-f\|_{L^1(B_4\rho)}=\int_{B_{4\rho}}\left\vert \int \nabla f (w) \cdot \Phi (w-z)\, dw\right\vert\,dz.
\end{equation*}
Since $\varphi$ is radial, the function $\Phi$ is a gradient.
Indeed, it can be easily checked that, for any $\psi$, the vector field $\psi (|w|)\, w$ is curl-free.
Moreover, $\supp (\Phi)$ is compactly contained in $B_\rho$.
Hence, we can apply \eqref{e:laplace0} and get
\begin{equation}\label{e:norm1}
\|\hat f-f\|_{L^1(B_4\rho)}\leq C\, E^{1+\eta}\,\rho^m\, \|\Phi\|_{L^1}.
\end{equation}
By a simple computation,
\begin{align*}
\|\Phi\|_{L^1}=\int_{\R{m}}\int_0^1 |w|\,\varphi
\left(\frac{w}{\rho\sigma}\right)\,\rho^{-m}\sigma^{-m-1}\, d\sigma\,dw
=\rho \int_{\R{m}}\int_0^1|y|\,\ph(y)\,d\sigma\,dy\, .
\end{align*}
The last integral is a constant which depends only on $\ph$.
Thus, \eqref{e:Linfty} follows from \eqref{e:norm1}.
\end{proof}

A simple consequence of the $L^1$-estimate is a comparison between
harmonic approximations at different scales.

\begin{corol}\label{c:alpha}
Assume $(p, 16\,r, \pi)$ is an admissible triple and
let $\bar f_1$ and $\bar f_2$ be as in $(I_2)$, with $\rho=r$ and $\rho=2\,r$ respectively.
Then, if $p=(q,u)\in \pi\times \pi^\perp$,
\begin{equation}\label{e:stima_alpha}
\sum_{\ell=0}^4 r^{\ell-3-\alpha}\|D^\ell \bar f_1-D^\ell \bar{f}_2\|_{C^0 (B^m_{3r/2} (q))}
\leq C .
\end{equation}
\end{corol}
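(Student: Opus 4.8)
The plan is to derive \eqref{e:stima_alpha} from the $L^1$-estimate of Proposition~\ref{p:L1} together with interior estimates for harmonic functions and the elementary polynomial inequality of Lemma~\ref{l:poly}. First I would observe that $\bar f_1$ is harmonic on $B_{4r}(q)$ and $\bar f_2$ is harmonic on $B_{8r}(q)$, so their difference $h:=\bar f_1-\bar f_2$ is harmonic on $B_{4r}(q)\supset B_{2r}(q)$. For a harmonic function, all the $C^\ell$ norms on $B_{3r/2}(q)$ are controlled by the $L^1$ norm on $B_{2r}(q)$ via the standard interior estimates: $\|D^\ell h\|_{C^0(B_{3r/2}(q))}\le C\,r^{-m-\ell}\|h\|_{L^1(B_{2r}(q))}$. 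Hence \eqref{e:stima_alpha} reduces to the single bound $\|\bar f_1-\bar f_2\|_{L^1(B_{2r}(q))}\le C\,r^{m+3+\alpha}$, and then each term $r^{\ell-3-\alpha}\|D^\ell h\|_{C^0}$ is bounded by $r^{\ell-3-\alpha}\cdot C r^{-m-\ell}\cdot C r^{m+3+\alpha}=C$.

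Next I would establish the $L^1$ bound on $h$ by inserting $f$ (the Lipschitz approximation from Proposition~\ref{p:approx} at the appropriate scale) as an intermediate term. Writing $\|\bar f_1-\bar f_2\|_{L^1(B_{2r}(q))}\le \|\bar f_1-f\|_{L^1(B_{2r}(q))}+\|f-\bar f_2\|_{L^1(B_{2r}(q))}$, each summand is controlled by Proposition~\ref{p:L1}: the first with $\rho=r$ (note $B_{2r}(q)\subset B_{4r}(q)$), giving $\le C\,r^{m+3+\alpha}$; the second with $\rho=2r$, giving $\le C\,(2r)^{m+3+\alpha}=C\,r^{m+3+\alpha}$. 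Here one must check that the admissibility hypothesis $(p,16r,\pi)$ guarantees admissibility of $(p,8r,\pi)$ and $(p,16r,\pi)$ as needed to invoke Proposition~\ref{p:L1} at scales $\rho=r$ and $\rho=2r$ respectively — this follows because \eqref{e:good} is a statement about a single ball and the relevant radii $8r$ and $16r$ are both $\le 16r$, with the power $\rho^{2-2\delta}$ only making the right-hand side larger at the larger scale; more precisely admissibility at scale $16r$ implies admissibility at any smaller scale up to adjusting $C_{m,n}$, which is harmless since $C_{m,n}$ is allowed to be large. The point is that both $f$'s used in defining $\bar f_1$ and $\bar f_2$ are restrictions of the \emph{same} Lipschitz approximation of $T$ (at scale comparable to $16r$), so the triangle inequality through $f$ is legitimate.

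The main obstacle I anticipate is bookkeeping of the scales and domains of definition rather than any genuine analytic difficulty: one must make sure that the harmonic functions $\bar f_1,\bar f_2$, defined a priori on $B_{4r}(q)$ and $B_{8r}(q)$ via step $(I_2)$, are built from Lipschitz approximations $f$ whose domains contain $B_{6r}(q)$ and $B_{12r}(q)$ respectively, so that Proposition~\ref{p:L1} applies verbatim on $B_{4r}(q)$ and $B_{8r}(q)$; and that the ball $B_{3r/2}(q)$ appearing in \eqref{e:stima_alpha} sits strictly inside $B_{2r}(q)$, which sits strictly inside $B_{4r}(q)$, so that interior estimates for $h$ are available with a fixed dimensional constant. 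A minor additional point is uniformity of $\alpha$: the $\alpha$ in Proposition~\ref{p:L1} is a fixed exponent (coming from $\lambda$ and from $(2-2\delta)(1+\eta)-1$ in Lemma~\ref{l:laplace}), so no loss occurs when we use it here. Once these domain inclusions are recorded, the proof is the two-line chain: $L^1$-triangle inequality through $f$, two applications of Proposition~\ref{p:L1}, and interior harmonic estimates — which is exactly the short argument the statement advertises as ``a simple consequence.''
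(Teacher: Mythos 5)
Your reduction of \eqref{e:stima_alpha} to the single bound $\|\bar f_1-\bar f_2\|_{L^1(B_{2r}(q))}\leq C\,r^{m+3+\alpha}$ via interior (mean-value) estimates for the harmonic difference is exactly the paper's first step, and invoking Proposition~\ref{p:L1} at the scales $\rho=r$ and $\rho=2r$ is also right. The genuine gap is your claim that the two harmonic replacements are built from restrictions of the \emph{same} Lipschitz approximation $f$, so that a triangle inequality through a single $f$ suffices. That is unjustified: Proposition~\ref{p:approx} is applied in the two different cylinders $\cyl^\pi_{8r}(q)$ and $\cyl^\pi_{16r}(q)$, with different excesses, and the construction (a maximal-function truncation, see the appendix) produces two a priori different functions $f_1$ and $f_2$; nothing asserts that $f_1$ is the restriction of $f_2$. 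Moreover Proposition~\ref{p:L1} controls $\|\bar f_i-f_i\|_{L^1}$ only against the approximation $f_i$ belonging to the same scale, so the chain must pass through both $f_1$ and $f_2$, and you are missing the estimate $\|f_1-f_2\|_{L^1(B_{2r})}\leq C\,r^{m+3+\alpha}$, i.e.\ \eqref{e:L1prov2}. This is precisely where the paper does its (small amount of) real work: by \eqref{e:approx2} both graphs coincide with $T$ over a set $K$ with $|B_{2r}\setminus K|\leq C\,E_2^{1+\eta}r^m$, hence $f_1=f_2$ on $K$; off $K$ the uniform Lipschitz bounds give $|f_1-f_2|\leq C\,r$; combining, $\|f_1-f_2\|_{L^1(B_{2r})}\leq C\,r^{m+1+(1+\eta)(2-2\delta)}\leq C\,r^{m+3+\alpha}$ for $\delta$ small. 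Without this step your argument does not close.

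A secondary slip: your justification that admissibility at scale $16r$ yields admissibility at the smaller scales goes the wrong way, since shrinking $\rho$ makes the right-hand side of \eqref{e:good} \emph{smaller}, so this is not a monotonicity statement about a single ball. The correct route is Corollary~\ref{c:decay_everywhere}(b): admissibility of $(p,16r,\pi)$ gives $|\pi-\pi_p|\leq C'\eps_0(16r)^{1-\delta}$, which in turn makes $(p,8r,\pi)$ admissible after enlarging $C_{m,n}$ (equivalently, the paper deduces $E_2:=\E(T,B_{16r}(p),\pi)\leq C\,r^{2-2\delta}$ and works with that). Your final conclusion about this point is correct, but the reason you give for it would not survive scrutiny.
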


\begin{proof}
It is enough to show that
\begin{equation}\label{e:L1bis}
\|\bar f_1-\bar f_2\|_{L^1(B_{2r})}\leq C\, r^{m+3+\alpha},
\end{equation}
because then the conclusion of the lemma follows easily from the classical
mean-value property of harmonic functions.
Clearly, from the admissibility of $(p,16\,r,\pi)$ and
Corollary~\ref{c:decay_everywhere},
it follows that $|\pi-\pi_p|\leq C\,r^{2-2\delta}$.
Hence, always by the same corollary
$E_2:=\E(T, B_{16 r}(p), \pi)\leq C r^{2-2\delta}$.
Then, in view of Proposition \ref{p:L1}, in order to show \eqref{e:L1bis},
it suffices to prove
\begin{equation}\label{e:L1prov2}
\|f_1-f_2\|_{L^1(B_{2r})}\leq  C\, r^{m+3+\alpha}.
\end{equation}
Note first that $f_1$ and $f_2$ coincide
on a set $K$ with $|B_{2r} \setminus K|\leq C E_2^{1+\eta} r^m$.
Moreover, since the Lipschitz constants of $f_1$ and $f_2$ are bounded by
a universal constant $C$,
we have $|f_1 (z)- f_2 (z)|\leq C r$ for every $z\in B_{2r}$.
Therefore, we conclude \eqref{e:L1prov2} from
\begin{equation*}
\|f_1-f_2\|_{L^1 (B_{2r})} \leq C r |B_{2r}\setminus K|\leq C \,r\, E_2^{1+\eta}\,r^m
\leq C \, r^{m+1+(1+\eta)(2-2\delta)}.
\end{equation*}
\end{proof}

\section{Proof of Proposition~\ref{p:main}}
The proof of \eqref{e:A} in Proposition~\ref{p:main} is given by a simple iteration
of Corollary \ref{c:alpha} on dyadic balls.

\begin{lemma}\label{l:beta}
Let $g_1, g_2$ be respectively the $(p, \rho, \pi)$- and the
$(p, 2^N\rho, \pi)$-interpolation (under the assumption of admissibility \eqref{e:good}).
Then, for $p=(q',u')\in \pi_0\times\pi_0^\perp$, it holds
\begin{equation}\label{e:A'}
\|g_1\|_{C^3} + \rho^{1-\alpha} \|D^4 g_1\|_{C^0}\leq C ,
\end{equation}
\begin{equation}\label{e:stima_beta}
|D^3 g_1 (q') -D^3 g_2 (q')| \leq C (2^N \rho)^\alpha.
\end{equation}
\end{lemma}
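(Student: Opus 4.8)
The plan is to iterate Corollary~\ref{c:alpha} over the dyadic scales $r_j:=2^j\rho$ and then to transfer the resulting estimates, which are formulated in the system of coordinates adapted to $\pi$, into estimates for $g_1,g_2$ in the system adapted to $\pi_0$ via Lemma~\ref{l:rotation}. For $j\ge 0$ let $\bar f^{(j)}$ be the harmonic function of step~($I_2$) associated to the triple $(p,8r_j,\pi)$, so that $\gr_{\pi_0}(g_1)=\gr_\pi(\bar f^{(0)})$ and $\gr_{\pi_0}(g_2)=\gr_\pi(\bar f^{(N)})$ on the relevant cylinders. A preliminary remark on admissibility: by Corollary~\ref{c:decay_everywhere}(b) the admissibility of $(p,8\rho,\pi)$ gives $|\vec\pi-\vec\pi_p|\le C'\eps_0(8\rho)^{1-\delta}$, and since $r_j\ge\rho$ this is in particular $\le C'\eps_0(8r_j)^{1-\delta}$; hence, by the ``vice versa'' part of Corollary~\ref{c:decay_everywhere}(b) and the Remark following it (with $C_{m,n}$ large), $(p,8r_j,\pi)$ is admissible for every $j$ with $r_j\le 2^{-6}$. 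Thus $(p,16r_j,\pi)=(p,8r_{j+1},\pi)$ is admissible whenever $r_{j+1}\le 2^{-6}$, which makes all applications of Corollary~\ref{c:alpha} below — and the definition of $g_2$ — legitimate. Let $M$ be the largest index with $r_M\le 2^{-6}$; then $r_M\in(2^{-7},2^{-6}]$, and $N\le M$ since $r_N=2^N\rho\le 2^{-6}$.

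Applying Corollary~\ref{c:alpha} with $r=r_j$ for $j=0,\dots,M-1$, restricting every estimate to the smallest ball $B_{3\rho/2}(q)$, and summing, we obtain for $\ell=0,\dots,4$
\[
\|D^\ell\bar f^{(0)}-D^\ell\bar f^{(M)}\|_{C^0(B_{3\rho/2}(q))}\;\le\; C\sum_{j=0}^{M-1}r_j^{\,3+\alpha-\ell}.
\]
When $\ell\le 3$ the exponent $3+\alpha-\ell$ is positive, the geometric series is dominated by its top term, and the right-hand side is $\le C\,r_M^{\,3+\alpha-\ell}\le C$; when $\ell=4$ the exponent $\alpha-1$ is negative, the series is dominated by its bottom term, and the right-hand side is $\le C\rho^{\alpha-1}$. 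On the other hand, at the fixed scale $r_M\simeq 2^{-6}$ the function $\bar f^{(M)}$ is harmonic in $B_{4r_M}(q)$ with boundary datum $f^{(M)}*\varphi_{r_M}$, so by \eqref{e:approx1}, the bound $\|f^{(M)}-u\|_{C^0}\le C r_M$ from the Remark after ($I_3$), the maximum principle and the interior estimates for harmonic functions, $\|\bar f^{(M)}-u\|_{C^3}+\|D^4\bar f^{(M)}\|_{C^0}\le C$ on $B_{2r_M}(q)\supset B_{3\rho/2}(q)$ (a dimensional constant, as $r_M$ is comparable to a fixed number and $|u|\le 1/16$). Adding the two groups of estimates gives
\[
\|\bar f^{(0)}-u\|_{C^3(B_{3\rho/2}(q))}\le C,\qquad \rho^{1-\alpha}\|D^4\bar f^{(0)}\|_{C^0(B_{3\rho/2}(q))}\le C,
\]
and likewise for $\bar f^{(N)}$ (with $\rho$ replaced by $2^N\rho$ in the second inequality). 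Moreover, restricting the telescoping sum to $j=0,\dots,N-1$ and using $2^N\rho<1$ to see that the $\ell=3$ term dominates, one gets $\|\bar f^{(0)}-\bar f^{(N)}\|_{C^3(B_{3\rho/2}(q))}\le C\,(2^N\rho)^{\alpha}$.

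It remains to pass to the coordinates adapted to $\pi_0$, which is precisely the change of coordinates for graphs of Lemma~\ref{l:rotation} with the rotation $A$ relating $\pi_0$ and $\pi$ — the \emph{same} $A$ for $g_1$ and $g_2$, since it depends only on $\pi$. Its hypotheses hold because $\|A-\Id\|\le C\eps_0|p|^{1-\delta}\le c_0$ by Corollary~\ref{c:decay_everywhere}, while the smallness of $\Lip(\bar f^{(i)})$ and of $\|\bar f^{(i)}-u\|_{C^0}$ was checked in the Remark after ($I_3$). Then \eqref{e:est_C3} and \eqref{e:est_C4} yield $\|g_1\|_{C^3}\le C\,\Phi(\|A-\Id\|,\|\bar f^{(0)}-u\|_{C^3})\le C$ and $\|D^4 g_1\|_{C^0}\le\Psi(\|A-\Id\|,\|\bar f^{(0)}-u\|_{C^3})(1+\|D^4\bar f^{(0)}\|_{C^0})\le C\rho^{\alpha-1}$, which is \eqref{e:A'}. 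Finally, the same inverse-function-theorem argument that proves Lemma~\ref{l:rotation}(c) shows that the change of coordinates for graphs is a smooth — hence locally Lipschitz — map between the relevant open subsets of $C^3$; since $\bar f^{(0)}$ and $\bar f^{(N)}$ lie in a fixed bounded subset by the previous paragraph, we conclude $|D^3 g_1(q')-D^3 g_2(q')|\le\|g_1-g_2\|_{C^3}\le C\,\|\bar f^{(0)}-\bar f^{(N)}\|_{C^3}\le C\,(2^N\rho)^{\alpha}$, which is \eqref{e:stima_beta}.

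The one delicate point is the bookkeeping of exponents: $\alpha$ must be chosen once and for all with $0<\alpha<1$, so that the $D^4$-series converges ``from below'' to $C\rho^{\alpha-1}$, while at the same time $3+\alpha-\ell>0$ for every $\ell\le 3$, so that the lower-order series converge ``from above'' to bounded quantities; of course $\alpha$ cannot exceed the exponent delivered by Corollary~\ref{c:alpha} (and ultimately by Proposition~\ref{p:L1}). The only other thing to watch, handled in the first paragraph, is that admissibility of a fixed plane $\pi$ propagates from a scale to all larger scales but not to smaller ones, so the admissibility hypothesis of the lemma must be read at the smallest scale $8\rho$ occurring in the construction of $g_1$.
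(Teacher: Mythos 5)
Your proof of \eqref{e:A'} is essentially the paper's: the same dyadic iteration of Corollary~\ref{c:alpha} with the fixed plane $\pi$ (your admissibility bookkeeping via Corollary~\ref{c:decay_everywhere}(b) and the Remark after it is the same device the paper uses), the same absorption of the top scale into a universal constant, and the transfer to $g_1$ via Lemma~\ref{l:rotation}(c). Up to that point the argument is sound.

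The gap is in the last step, where you deduce \eqref{e:stima_beta} from $\|\bar f^{(0)}-\bar f^{(N)}\|_{C^3}\leq C(2^N\rho)^\alpha$ by asserting that the graph coordinate change $f\mapsto f'$ is ``smooth, hence locally Lipschitz'' from $C^3$ to $C^3$. That assertion is false: writing $f'=F_f\circ I_f^{-1}$ as in the proof of Lemma~\ref{l:rotation}, the third derivative of $f_1'-f_2'$ contains terms of the form $D^3F_{f_2}\circ I_{f_1}^{-1}-D^3F_{f_2}\circ I_{f_2}^{-1}$ (and analogous ones coming from the derivatives of the inverse maps), which are controlled by $f_1-f_2$ only through a modulus of continuity of the \emph{third} derivatives, i.e.\ through $D^4$ (or $C^{3,\beta}$) information; composition is not a Lipschitz operation on $C^3$. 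In your situation the $C^4$ norms are precisely what is \emph{not} uniformly bounded: they blow up like $\rho^{\alpha-1}$ and $(2^N\rho)^{\alpha-1}$, so ``$\bar f^{(0)},\bar f^{(N)}$ lie in a fixed bounded subset of $C^3$'' does not yield a uniform Lipschitz constant, and the naive pairing $\|D^4\bar f^{(0)}\|_{C^0}\cdot\|\bar f^{(0)}-\bar f^{(N)}\|_{C^0}\sim \rho^{\alpha-1}(2^N\rho)^{3+\alpha}$ is not $O((2^N\rho)^\alpha)$ when $N$ is large. The conclusion can be rescued, but only by a quantitative argument you do not give: either compare consecutive scales $g_k$, $g_{k+1}$ (where both $D^4$ bounds are $\sim r_k^{\alpha-1}$ and the $C^0$ difference is $\sim r_k^{3+\alpha}$, so the extra terms are $O(r_k^{2+2\alpha})$) and then sum the pointwise $D^3$ differences, or do as the paper does: transfer only the $L^1$ difference through Lemma~\ref{l:rotation}(b), and recover $|D^3g_k(q')-D^3g_{k+1}(q')|\leq Cr_k^\alpha$ from that $L^1$ bound, the $D^4$ bounds on $g_k,g_{k+1}$, and the comparison of third-order Taylor polynomials via Lemma~\ref{l:poly}, before iterating over the scales. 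As written, the inequality $\|g_1-g_2\|_{C^3}\leq C\|\bar f^{(0)}-\bar f^{(N)}\|_{C^3}$ is unjustified.
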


\begin{proof}
Recalling Lemma \ref{l:rotation}, it suffices to show \eqref{e:A'} for the function
$\bar f_1$.
Let $n_0$ be the biggest integer such that $2^{n_0+3}\rho\leq \frac{1}{2}$ and
for every $k\leq n_0-1$ set $r_k=2^k\,\rho$.
If $\pi_k$ is such that $\E(T, B_{8 r_k}, \pi_k)=\E(T, B_{8 r_k})$,
then, by Corollary~\ref{c:decay_everywhere} (b), $|\pi-\pi_k|\leq C\,r_k^{1-\delta}$.
Hence, we conclude that the admissibility condition \eqref{e:good} holds with $r=r_k$,
so that we can consider the approximation $\bar f_k$ as in ($I_2$) for $r_k$.
From Corollary \ref{c:alpha}, we get
\begin{equation}\label{e:stima_alpha2}
\|D^\ell \bar f_k-D^\ell \bar{f}_{k+1}\|_{C^0 (B^m_{3r_k/2} (q))}
\leq C r_k^{3+\alpha-\ell}\leq
C\, 2^{-(n_0-k)(3+\alpha-\ell)} \quad\text{for }\;\ell\in \{0,1,2,3,4\}.
\end{equation}
Note that the series $\sum_i 2^{-i(3+\alpha-\ell)}$ is summable for $\ell\leq 3$.
Therefore, $\|\bar f_1\|_{C^3}\leq C+\|\bar f_{n_0}\|_{C^3}$.
On the other hand, since $r_{n_0}>1/32$, it is easy to see that $\|\bar f_{n_0}\|_{C^3}\leq C$
for some universal constant $C$, so that $\|\bar f_1\|_{C^3}\leq C$.
In the same way we have $\|D^4 \bar f_1\|_{C^0}\leq C\, \rho^{\alpha-1}$.
Then, \eqref{e:A'} follows from Lemma \ref{l:rotation} (c) .

Finally, Corollary \ref{c:alpha} obviously implies that
\begin{equation}\label{e:ancoraL^1}
\int_{B^m_{3r_k/2} (q)} |\bar f_k - \bar f_{k+1}| \leq C r_k^{m+3+\alpha}.
\end{equation}
Hence, using again Lemma \ref{l:rotation}, we conclude
\begin{equation}\label{e:ancoraL^1_2}
\int_{B^m_{r_k} (q')} |g_k - g_{k+1}| \leq C r_k^{m+3+\alpha}.
\end{equation}
Let $P_k$ and $P_{k+1}$ be the third order Taylor polynomials at $q'$ of $g_k$ and $g_{k+1}$.
From the estimate $\|D^4 g_k\|, \|D^4 g_{k+1}\|\leq C r_k^{\alpha-1}$
and \eqref{e:ancoraL^1_2}, we easily infer
\begin{equation*}
\int_{B^m_{r_k} (q')} |P_k - P_{k+1}| \leq C\, r_k^{m+3+\alpha}.
\end{equation*}
Hence, applying Lemma \ref{l:poly}, we then get
\begin{equation}\label{e:iterateD^3}
|D^3 g_k (q')-D^3 g_{k+1} (q')| =
|D^3 P_k (q')-D^3 P_{k+1} (q')| \leq C\, r_k^\alpha.
\end{equation}
Arguing as above, the estimate \eqref{e:stima_beta}
follows from \eqref{e:iterateD^3} and a simple iteration.
\end{proof}

The final step in the proof of Proposition~\ref{p:main} consists in comparing two different
interpolating functions defined at the same scale but for nearby balls and varying planes
$\pi$.
We do this in the following two separate lemmas.

\begin{lemma}\label{l:gamma}
Let $g_1$ and $g_2$ be the $(p,\rho, \pi)$- and $(p, \rho, \pi')$-interpolating functions
where as usual $(p,8\rho,\pi)$ and $(p,8\rho, \pi')$ are admissible.
Then,
\begin{equation}\label{e:gamma}
\sum_{\ell=0}^3\rho^{\ell-3-\alpha}\|D^\ell g_1-D^\ell g_2\|_{C^0 (B^m_\rho (q))} \leq C.
\end{equation}
\end{lemma}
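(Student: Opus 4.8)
The plan is to reduce \eqref{e:gamma} to the single $L^1$ bound $\|g_1-g_2\|_{L^1(B_\rho(q))}\leq C\rho^{m+3+\alpha}$ (where $q$ now denotes the projection of $p$ onto $\pi_0$, so that $g_1,g_2$ are both defined near $q$), and then to prove this bound by routing both interpolations through the current $T$ itself. For the reduction I would use that $\|D^4 g_i\|_{C^0}\leq C\rho^{\alpha-1}$ for $i=1,2$, which is already contained in \eqref{e:A'} of Lemma~\ref{l:beta}, applicable to both $g_1$ and $g_2$ since $(p,8\rho,\pi)$ and $(p,8\rho,\pi')$ are admissible. Letting $P_i$ be the third order Taylor polynomial of $g_i$ at $q$, the remainder estimate upgrades the $L^1$ bound on $g_1-g_2$ to $\|P_1-P_2\|_{L^1(B_\rho(q))}\leq C\rho^{m+3+\alpha}$; Lemma~\ref{l:poly} applied to the degree $\leq 3$ polynomial $P_1-P_2$ then gives $|D^\ell(P_1-P_2)(q)|\leq C\rho^{3+\alpha-\ell}$ for $\ell\leq 3$, and combining this with $\|D^\ell g_i-D^\ell P_i\|_{C^0(B_\rho(q))}\leq C\|D^4 g_i\|_{C^0}\rho^{4-\ell}\leq C\rho^{3+\alpha-\ell}$ and summing over $\ell\in\{0,1,2,3\}$ yields \eqref{e:gamma}. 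This part is routine, in the spirit of the end of the proof of Lemma~\ref{l:beta}.

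For the $L^1$ bound I would first pass everything into the coordinates of $\pi_0$. By Corollary~\ref{c:decay_everywhere}, $|\pi-\pi_0|,|\pi'-\pi_0|\leq C\eps_0$, and, as in the Remark following $(I_3)$, the Lipschitz estimate \eqref{e:approx1} together with $\|f-u\|_{C^0}\leq C\rho\,E^{\eta/2}$ (and the analogues for $f'$) show that $\bar f,f$ (resp.\ $\bar f',f'$) satisfy hypotheses (i)--(ii) of Lemma~\ref{l:rotation} for the rotation taking $\pi$ (resp.\ $\pi'$) to $\pi_0$. Hence Lemma~\ref{l:rotation}(a)--(b) produces, over a ball $B_\rho(q)\subset\pi_0$, the reparametrizations $\tilde f$ of $\gr_\pi(f)$ and $\tilde f'$ of $\gr_{\pi'}(f')$ (together with $g_1,g_2$), and
\[
\|g_1-\tilde f\|_{L^1(B_\rho(q))}\leq C\,\|\bar f-f\|_{L^1(B_{2\rho})}\leq C\,\rho^{m+3+\alpha}
\]
by Proposition~\ref{p:L1}, and symmetrically $\|g_2-\tilde f'\|_{L^1(B_\rho(q))}\leq C\rho^{m+3+\alpha}$. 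Thus only $\|\tilde f-\tilde f'\|_{L^1(B_\rho(q))}$ remains to be controlled.

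This last estimate is the step I expect to be the main obstacle. The point is that $\tilde f$ and $\tilde f'$ both \emph{represent $T$} over $\pi_0$, away from a small set. Indeed, by \eqref{e:approx2}, $\gr_\pi(f|_K)=T\res(K\times\pi^\perp)$ with $|B_{6\rho}\setminus K|\leq C E^{1+\eta}\rho^m$, and likewise for $f'$; comparing the cylindrical excess of Proposition~\ref{p:approx} with the spherical one and using admissibility gives $E,E'\leq C\eps_0^2\rho^{2-2\delta}$, and since the changes of coordinates are $(1+C\eps_0)$-biLipschitz the $\pi_0$-projections of the two exceptional sets still have measure $\leq C\eps_0^{2(1+\eta)}\rho^{m+(2-2\delta)(1+\eta)}$. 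At any point $z\in B_\rho(q)$ outside both exceptional sets, $(z,\tilde f(z))$ and $(z,\tilde f'(z))$ both lie in $\supp(T)$, so the single-valuedness of $T$ over $B^m_{1/4}$ from Corollary~\ref{c:decay_everywhere}(c) forces $\tilde f(z)=\tilde f'(z)$; on the exceptional set I would use the crude bound $|\tilde f-\tilde f'|\leq C\rho$, which follows from the Lipschitz estimate and the $C^0$ closeness of $f,f'$ to the normal component of $p$. Integrating,
\[
\|\tilde f-\tilde f'\|_{L^1(B_\rho(q))}\leq C\,\rho\cdot\eps_0^{2(1+\eta)}\rho^{m+(2-2\delta)(1+\eta)}\leq C\,\rho^{m+3+\alpha},
\]
the last inequality being valid as soon as $(2-2\delta)(1+\eta)\geq 2+\alpha$, i.e.\ for $\delta$ and $\alpha$ small relative to $\eta$ --- exactly the constraint already met in the proof of Corollary~\ref{c:alpha}. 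Combining the three steps gives \eqref{e:gamma}.
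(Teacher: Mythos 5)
Your proposal is correct and follows essentially the same route as the paper: reduce \eqref{e:gamma} to the $L^1$ bound via Taylor polynomials, Lemma~\ref{l:poly} and the $D^4$ estimate from \eqref{e:A'}, control $\|g_i-\tilde f_i\|_{L^1}$ by Lemma~\ref{l:rotation} and Proposition~\ref{p:L1}, and control the difference of the two reparametrized Lipschitz approximations by the fact that both describe $T$ off a set of measure $CE^{1+\eta}\rho^m$, with a crude $C\rho$ bound on the exceptional set. The only cosmetic deviation is that you invoke the single-valuedness from Corollary~\ref{c:decay_everywhere}(c) where the paper bounds $|\{h_1\neq h_2\}|$ by $\mathcal{H}^m(\gr(h_1)\,\triangle\,\gr(h_2))$; both rest on \eqref{e:approx2} and yield the same estimate.
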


\begin{proof}
As before, we first show that
\begin{equation}\label{e:g1g2}
\|g_1-g_2\|_{L^1 (B_{3/2 \rho} (q))}\leq C \rho^{m+3+\alpha}.
\end{equation}
Denote by $f_1, f_2$ the Lipschitz approximations given by Proposition~\ref{p:approx}
in the coordinates associated to $\pi, \pi'$ and
let $h_1, h_2: B_\rho (q)\to \pi_0^\perp$ be the Lipschitz functions whose graphs coincide
with the graphs of ${f}_1$ and $f_2$ respectively.
From Lemma~\ref{l:rotation} and Proposition~\ref{p:L1}, we have
\begin{equation*}
\|g_i-h_i\|_{L^1 (B_{3/2 \rho} (q))}\leq 
\|f_i-\bar{f}_i\|_{L^1 (B_{2\rho} (q_i))}\leq C \,\rho^{m+3+\alpha},
\end{equation*}
where $(q_1, u_1)$, $(q_2, u_2)$ and $(q,u)$ are the coordinates of $p$ in
$\pi\times \pi^\perp$, $\pi'\times \pi'^\perp$ and $\pi_0\times \pi_0^\perp$ respectively.
Therefore, for \eqref{e:g1g2} it is enough to show
\[
\|h_1-h_2\|_{L^1 (B_{3/2 \rho} (q))}\leq C \rho^{m+3+\alpha}.
\]
To see this, consider the set $A=\{h_1\neq h_2\}$.
From Proposition~\ref{p:approx} if follows that
\begin{equation*}
|A|\leq \mathcal{H}^m \big(\gr (h_1)\,\triangle\, \gr (h_2)\big)\leq C \rho^{m+2+\alpha}.
\end{equation*}
Then, if $x\in A$ and $y\in B_{3\rho/2}\setminus A$,
since $h_1 (y)=h_2 (y)$ and $\Lip (h_i)\leq C$, we have
\[
|h_1 (x) - h_2 (x)|\leq |h_1 (x)-h_1 (y)|+|h_2 (y) - h_2 (x)| \leq C|y-x|\leq C \rho,
\]
from which $\|h_1-h_2\|_{L^1 (B_{3/2 \rho} (q))}\leq C\,r\,|A|\leq C\, \rho^{m+3+\alpha}$.

From \eqref{e:g1g2} we are ready to conclude.
Let $x\in B_\rho (q)$ and $P_i$ be the third order Taylor expansions of $g_i$ at $x$.
Arguing as in Lemma \ref{l:beta}, we conclude
\[
\|P_1-P_2\|_{L^1 (B_{\rho/2} (x))}\leq C \rho^{m+3+\alpha}.
\]
Using Lemma \ref{l:poly} we then conclude
\begin{equation}\label{e:scaled_est}
|D^k P_1 (x) - D^k P_2 (x)|\leq C \rho^{3-k+\alpha} 
\quad \text{for }\;k\in \{0,1,2,3\}.
\end{equation}
On the other hand, since $D^k P_i (x)= D^k g_i (x)$,
\eqref{e:scaled_est} implies the desired estimates.
\end{proof}

\begin{lemma}\label{l:delta}
Let $g_1$ and $g_2$ be, respectively, the $(p,\rho, \pi)$- and $(p', \rho, \pi)$-interpolating
functions, where $(p,\rho,\pi)$ and $(p',\rho,\pi)$ are admissible.
Assume that $p=(q,u)$, $p'=(q',u')$ with $|q-q'|\leq \rho/16$.
Then,
\begin{equation}\label{e:delta}
\sum_{\ell=1}^4 \rho^{\ell-3-\alpha}\|D^\ell g_1-D^\ell g_2\|_{C^0 (B^n_\rho (q)\cap B^n_\rho (q'))} \leq C.
\end{equation}
\end{lemma}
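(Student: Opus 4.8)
The plan is to run the same three-step scheme as in the proofs of Corollary~\ref{c:alpha} and Lemma~\ref{l:gamma}, with ``same centre, two planes'' replaced by ``same plane $\pi$, two nearby centres''. Throughout I work in the system of coordinates attached to $\pi$; since it differs from the one attached to $\pi_0$ by a rotation which is $C\eps_0$-close to the identity (Corollary~\ref{c:decay_everywhere}), the centres of the two constructions are at distance $\leq C\rho$ from each other, and a ball of radius $3\rho/2$ around either of them covers the common domain $B_\rho(q)\cap B_\rho(q')$ appearing in \eqref{e:delta} with room to spare.

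As in the two quoted arguments, the lemma reduces to the single $L^1$ estimate
\[
\|g_1-g_2\|_{L^1(B_{3\rho/2})}\leq C\,\rho^{m+3+\alpha}.
\]
Granting this, \eqref{e:delta} follows verbatim as in the last part of Lemma~\ref{l:beta}: by Lemma~\ref{l:beta} itself one has $\|g_i\|_{C^3}\leq C$ and $\|D^4 g_i\|_{C^0}\leq C\rho^{\alpha-1}$, so replacing $g_i$ by its third order Taylor polynomial $P_i$ at a point $x\in B_\rho(q)\cap B_\rho(q')$ costs at most $C\rho^{m+3+\alpha}$ in $L^1(B_{\rho/4}(x))$; hence $\|P_1-P_2\|_{L^1(B_{\rho/4}(x))}\leq C\rho^{m+3+\alpha}$, Lemma~\ref{l:poly} yields $|D^\ell g_1(x)-D^\ell g_2(x)|=|D^\ell P_1(x)-D^\ell P_2(x)|\leq C\rho^{3-\ell+\alpha}$ for $\ell\in\{1,2,3\}$, and for $\ell=4$ the bound $\rho^{1-\alpha}\|D^4g_1-D^4g_2\|_{C^0}\leq 2\rho^{1-\alpha}\max_i\|D^4 g_i\|_{C^0}\leq C$ is immediate.

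To prove the $L^1$ bound I would pass through the Lipschitz approximations and harmonic replacements entering the construction of $g_1$ and $g_2$. Let $f_1,f_2$ be the Lipschitz approximations of Proposition~\ref{p:approx} used in step $(I_1)$ — defined on $B^m_{6\rho}(q)$ and $B^m_{6\rho}(q')$ and, crucially (unlike in Lemma~\ref{l:gamma}), in the \emph{same} coordinates — and $\bar f_1,\bar f_2$ the harmonic functions of step $(I_2)$ on $B^m_{4\rho}(q)$ and $B^m_{4\rho}(q')$. Since $|q-q'|\leq\rho/16$, a ball $B_{7\rho/2}$ around either centre lies in all four of these. By \eqref{e:approx2}, $f_1=f_2$ outside a set of measure $\leq C E^{1+\eta}\rho^m$ with $E\leq C\eps_0^2\rho^{2-2\delta}$ (admissibility and Corollary~\ref{c:decay_everywhere}), while \eqref{e:approx1}, $\|f_i-u_i\|_{C^0}\leq C\rho$ (argued in the Remark following $(I_3)$) and $|u_1-u_2|\leq|p-p'|\leq C\rho$ give $\|f_1-f_2\|_{C^0(B_{7\rho/2})}\leq C\rho$. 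Therefore, exactly as in the closing estimate of the proof of Corollary~\ref{c:alpha},
\[
\|f_1-f_2\|_{L^1(B_{7\rho/2})}\leq C\rho\,E^{1+\eta}\rho^m\leq C\rho^{m+1+(1+\eta)(2-2\delta)}\leq C\rho^{m+3+\alpha}
\]
provided $\alpha$ is small and $\delta$ small relative to $\eta$. Combining this with Proposition~\ref{p:L1} (which gives $\|f_i-\bar f_i\|_{L^1}\leq C\rho^{m+3+\alpha}$) and the triangle inequality yields $\|\bar f_1-\bar f_2\|_{L^1(B_{7\rho/2})}\leq C\rho^{m+3+\alpha}$; and since $g_1,g_2$ are obtained from $\bar f_1,\bar f_2$ by one and the same rotation to the $\pi_0$-coordinates and each $\bar f_i$ satisfies the hypotheses of Lemma~\ref{l:rotation} (small $\|A-\Id\|$, small Lipschitz constant, $\|\bar f_i-u_i\|_{C^0}\leq c_0\rho$), part (b) of that lemma gives $\|g_1-g_2\|_{L^1(B_{3\rho/2})}\leq C\|\bar f_1-\bar f_2\|_{L^1}\leq C\rho^{m+3+\alpha}$, as required.

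The only genuinely new ingredient, and the step I expect to need the most care, is the claim ``$f_1=f_2$ outside a small set'': this is where it is essential that $f_1$ and $f_2$ are graphical representations over $\pi$ of the \emph{same} current $T$ and that, by Corollary~\ref{c:decay_everywhere}(c) and the smallness of $|\pi-\pi_0|$, $T$ is a single-valued Lipschitz graph over $\pi$ in the relevant region, so that on the intersection $K_1\cap K_2$ of the two ``good sets'' the two approximations agree identically and their $L^1$ distance is controlled purely by the measure $CE^{1+\eta}\rho^m$ of the bad set. Everything else — the bookkeeping of domains (one needs the objects available on a ball slightly larger than $B_\rho(q)\cap B_\rho(q')$, which is guaranteed because $\bar f_i$ is built on $B_{4\rho}$) and the verification of the hypotheses of Lemmas~\ref{l:rotation} and~\ref{l:poly} — is routine and parallels the corresponding passages of Corollary~\ref{c:alpha} and Lemma~\ref{l:gamma}.
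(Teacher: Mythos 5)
Your proposal is correct and is essentially the argument the paper intends: the authors leave this lemma to the reader as ``a portion of the same computations used for Lemma~\ref{l:gamma}'', and your proof carries out exactly those computations, with the plane-rotation comparison replaced by the observation (already used in Corollary~\ref{c:alpha}) that the two Lipschitz approximations over the same plane $\pi$ represent the same current and hence coincide off a set of measure $CE^{1+\eta}\rho^m$, after which the $L^1$ bound, Proposition~\ref{p:L1}, Lemma~\ref{l:rotation}(b), the Taylor-polynomial reduction with Lemma~\ref{l:poly}, and the $D^4$ bound from \eqref{e:A'} give \eqref{e:delta}. No gaps beyond the routine domain bookkeeping you already flag.
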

The proof of this lemma exploits only a portion of the same computations used for Lemma~\ref{l:gamma}
and is left to the reader.

The proof of \eqref{e:B} follows straightforwardly from Lemma~\ref{l:gamma} and Lemma~\ref{l:delta};
while the proof of \eqref{e:C} is given below.
\begin{proof}[Proof of \eqref{e:C}]
Consider $R:= 16\,|q-q'|$ and let $h$, $k$ and $h'$ be the $(q, R, \pi)$-,
$(q, R, \pi')$- and $(q', R, \pi')$-interpolations, respectively.
By Corollary \ref{c:decay_everywhere}, if $|q-q'|$ is small enough,
we can apply Lemma \ref{l:gamma} and Lemma \ref{l:delta} to conclude that
$$
|D^3 h (q) - D^3 k (q)| + |D^3 k (q') - D^3 h' (q')|\leq C R^\alpha.
$$
On the other hand, by \eqref{e:A}, $\|D^4 k\|\leq C R^{\alpha-1}$,
and so $|D^3 h (q) - D^3 h' (q')|\leq R^\alpha$.
Since by Lemma \ref{l:beta} we know that $|D^3 g (q) - D^3 h (q)|\leq C
R^\alpha$
and $|D^3 g' (q') - D^3 h' (q')|\leq C R^\alpha$, the desired conclusion
follows.
\end{proof}

\appendix
\section{De Giorgi's regularity result}\label{s:appendix}
In this section we provide a proof of De Giorgi's regularity Theorem~\ref{t:degiorgi} in
its more refined version of Corollary~\ref{c:decay_everywhere}.
The overall strategy proposed here is essentially De Giorgi's celebrated original
one \cite{DG}; however, in many points we get advantage from
some new observations contained in our recent work \cite{DLSp2}.

In the following we keep the conventions of the rest of the paper,
but we use the various Greek letters $\alpha$, $\beta, \ldots$ for other
parameters and other functions.
Moreover, given a current $T$ in $\R{m+n}$, a Borel set $A\subset\R{m+n}$ and a simple $m$-vector $\tau$, we define the following excess measures:
\[
e(T,A,\tau):= \frac{1}{2}\int_A \vert \vec T-\tau\vert^2\,d\|T\|
\quad\text{and}\quad
e(T,A):= \min_{\|\tau\|=1} e(T,A,\tau).
\]

\subsection{Lipschitz approximation}
In this section we prove Proposition~\ref{p:approx}.
To this aim, we assume without loss of generality that $\pi=\pi_0$ and consider an area-minimizing integer rectifiable $m$-dimensional current $T$ in $\cyl_{r}$ such that
\[
\de \, T=0,\quad 
\proj_\#(T)=\a{B^m_r}\quad\text{and}\quad
E := \E (T, \cyl_{r})< 1,
\]
where $\proj: \R{m+n}\to \R{m}$ is the orthogonal projection.

The proof is in the spirit of the approximation result in \cite{DLSp2} and is made in three
steps.

\subsubsection{BV estimate}
Consider the push-forwards into the vertical direction of the $0$-slices
$\la T,\proj,x\ra$ through the projection $\proj^\perp:\R{m+n}\to \R{n}$:
\[
T_x:=\proj^\perp_\# \la T,\proj,x\ra.
\]
These integer $0$-currents (i.e.~sums of Dirac deltas with integer coefficients)
are characterized by the following identity (see \cite[Section 28]{Sim}):
\[
\int_{B_r}\la T_x,\psi\ra\ph(x)\, dx = \la T, \ph(x)\,\psi(y)\,dx\ra
\quad\text{for every }\,\ph\in C_c^\infty(B_r^m),\;\psi\in C_c^\infty(\R{n}).
\]
\begin{lemma}\label{l:A BV}
For every $\psi\in C_c^\infty(\R{n})$ with $\norm{D\psi}{\infty}\leq 1$, the
function $\Phi_\psi$ defined by $\Phi_\psi(x):=\la T_x,\psi\ra$ belongs to $BV (B_r^m)$ and
\begin{equation}\label{e:BV}
\big(|D\Phi_\psi|(A)\big)^2\leq 2\,e(T,A\times\R{n},\vec e_m)\,\|T\|(A\times\R{n})
\quad\text{for every open }\, A\subset B_r^m.
\end{equation}
\end{lemma}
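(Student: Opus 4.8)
The plan is to estimate the distributional derivative of $\Phi_\psi$ by testing against a smooth compactly supported vector field $g\in C^1_c(A,\R m)$ with $\|g\|_\infty\le 1$ and showing that $\int_A \Phi_\psi\,\dv g\,dx$ is controlled by the right-hand side of \eqref{e:BV}. The key observation is that the quantity $\int_A \Phi_\psi\,\dv g = \int_A \la T_x,\psi\ra\,\dv g(x)\,dx$ can be rewritten, via the slicing characterization recalled just above the lemma, as the action of $T$ on a suitable $m$-form. Indeed, testing the slicing identity with the components of $g$ in place of $\ph$ and using that $\dv g = \sum_i \partial_i g_i$, one identifies $\int_A \Phi_\psi\,\dv g\,dx$ with $\la T,\omega\ra$, where $\omega := \psi(y)\,\sum_i \partial_i g_i(x)\,dx$; integrating by parts in the $x$-variables (legitimate since $g$ has compact support in $A$) this becomes $-\la T, d\eta\ra$ for the $(m-1)$-form $\eta := \psi(y)\,\sum_i (-1)^{i-1} g_i(x)\,dx_1\wedge\cdots\widehat{dx_i}\cdots\wedge dx_m$, up to the term where $d$ hits $\psi(y)$, which involves $D_y\psi$ and the $dy$-differentials. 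Since $\de T=0$ in $\cyl_r$ and $\eta$ is compactly supported in $A\times\R n$, the term $\la T,d\eta\ra$ reduces to the contribution coming only from differentiating $\psi(y)$.

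Carrying this out, $\int_A \Phi_\psi\,\dv g\,dx$ equals $\la T,\zeta\ra$ for an $m$-form $\zeta$ whose comass at a point is bounded by $|D\psi(y)|\,|g(x)|\le 1$ on $A\times\R n$ and which is supported in $A\times\R n$; moreover $\zeta$ is built so that $\la\vec e_m\,({\rm as\ an\ }m\text{-covector}),\zeta\ra$ — i.e.\ the component of $\zeta$ along the horizontal plane — vanishes, because that component would require differentiating $\psi$ in an $x$-direction, which does not occur. Hence $\la T,\zeta\ra = \la \vec T-\vec e_m,\zeta\ra$ pointwise $\|T\|$-a.e., and by Cauchy--Schwarz
\[
\Big|\int_A \Phi_\psi\,\dv g\,dx\Big| = \Big|\int_{A\times\R n}\la\vec T-\vec e_m,\zeta\ra\,d\|T\|\Big|
\le \Big(\int_{A\times\R n}|\vec T-\vec e_m|^2\,d\|T\|\Big)^{1/2}\|T\|(A\times\R n)^{1/2}.
\]
Recognizing $\int_{A\times\R n}|\vec T-\vec e_m|^2\,d\|T\| = 2\,e(T,A\times\R n,\vec e_m)$ and taking the supremum over admissible $g$ gives $|D\Phi_\psi|(A)^2\le 2\,e(T,A\times\R n,\vec e_m)\,\|T\|(A\times\R n)$, which is \eqref{e:BV}; in particular, taking $A=B_r^m$ and using $\|T\|(\cyl_r)<\infty$ shows $\Phi_\psi\in BV(B_r^m)$.

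The main obstacle is the bookkeeping in the integration-by-parts step: one must check carefully that differentiating $\eta$ produces, besides the desired $\psi(y)\,\dv g(x)\,dx$ term, only terms containing a factor $dy_j$ paired with exactly $m-1$ of the $dx_i$'s, so that the resulting $m$-form $\zeta$ has no $dx_1\wedge\cdots\wedge dx_m$ component and its comass is governed by $|D\psi|\,|g|$ rather than by derivatives of $g$. This is where the hypothesis $\|D\psi\|_\infty\le 1$ enters and where the precise form of the slicing identity is used; once the form $\zeta$ is correctly identified the estimate is immediate. A minor point to address is justifying the manipulations for merely $BV$/rectifiable $T$ by first testing with smooth $g$ and passing to the supremum, which is exactly how the total variation $|D\Phi_\psi|(A)$ is defined.
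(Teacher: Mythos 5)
Your proposal is correct and follows essentially the same route as the paper: the slicing identity, integration by parts using $\partial T=0$ to land on the form where the derivative hits $\psi$, the observation that this form has no $dx_1\wedge\cdots\wedge dx_m$ component, and Cauchy--Schwarz against $\|T\|(A\times\R{n})$. The only (harmless) cosmetic difference is that you subtract $\vec e_m$ from $\vec T$ and use $\int|\vec T-\vec e_m|^2\,d\|T\|=2\,e(T,A\times\R{n},\vec e_m)$, whereas the paper projects off the $\vec e_m$-component of $\vec T$ and bounds $\int|\vec S|^2\,d\|T\|$ by the same quantity.
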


\begin{proof}
For $\ph\in C^\infty_c(A,\R{m})$, note that 
$(\dv\, \ph(x))\,dx=d\alpha$,
where
\[
\alpha=\sum_j \ph_j\,d\hat x^j\quad\text{and }\quad
d\hat x^j=(-1)^{j-1}dx^1\wedge\cdots\wedge dx^{j-1}\wedge dx^{j+1}\wedge\cdots\wedge dx^{m}.
\]
Hence, from the characterization of the slices $T_x$, it follows that
\begin{align}\label{e:var}
\int_{A}\Phi_\psi(x)\,\dv \ph(x)\,dx &=
\int_{B_r} \la T_x,\psi(y)\ra \dv\ph(x)\,dx=
\la T,\psi(y)\,\dv\,\ph(x)\,dx\ra\notag\\
&=\la T, \psi\,d\alpha\ra
=\la T, d(\psi\,\alpha)\ra-\la T, d\psi\wedge\alpha\ra=-\la T, d\psi\wedge\alpha\ra,
\end{align}
where in the last equality we used the hypothesis $\de T=\emptyset$ on $B_r^m\times \R{n}$.
Now, observe that the 
$m$-form $d\psi\wedge\alpha$ has no $dx$ component, since
\[
d\psi\wedge\alpha=\sum_{j=1}^m\sum_{i=1}^n
\frac{\de \psi}{dy^i}(y)\,\ph_j(x)\,dy^i\wedge d\hat x^j.
\]
Write $\vec{T}=(\vec T\cdot \vec e_m)\,\vec{e}_m+\vec{S}$
(see \cite[Section 25]{Sim} for the scalar product on $m$-vectors).
We then conclude that
\begin{equation}\label{e:nohoriz}
\la T, d\psi\wedge\alpha\ra=\langle \vec{S}\cdot\norm{T}{}, d\psi\wedge\alpha\rangle.
\end{equation}
Moreover,
\begin{align}\label{e:JS L2}
\int_{A\times\R{n}}|\vec{S}|^2\,d\norm{T}{}&=
\int_{A\times\R{n}}\l(1-\big(\vec T\cdot \vec e_m\big)^2\r)\,d\norm{T}{}
\leq 2 \int_{A\times\R{n}}\l(1-\big(\vec T\cdot \vec e_m\big)\r)\,d\norm{T}{}\notag\\
&=2\,e(T,A\times\R{n},\vec e_m).
\end{align}
If $\norm{\ph}{\infty}\leq 1$, then $|d\psi\wedge\alpha|\leq
\norm{D\psi}{\infty}\,\norm{\ph}{\infty}\leq 1$.
Hence, by Cauchy--Schwartz inequality
\begin{align*}
\int_{A}\Phi_\psi(x)\,\dv\, 
\ph(x)\,dx &\stackrel{\eqref{e:var}}{\leq}|\la T, d\psi\wedge\alpha\ra|
\stackrel{\eqref{e:nohoriz}}{=}|\langle \vec{S}\cdot\norm{T}{}, d\psi\wedge\alpha\rangle|\leq
|d\psi\wedge\alpha| \int_{A\times\R{n}}|\vec{S}|\,d\norm{T}{}\\
&\leq \l(\int_{A\times\R{n}}|\vec{S}|^2\,d\norm{T}{}\r)^{\frac{1}{2}}\sqrt{\mass(T\res(A\times\R{n}))}\\
&\stackrel{\eqref{e:JS L2}}{\leq}\sqrt{2\,e(T,A\times\R{n},\vec e_m)}\,\sqrt{\mass(T\res(A\times\R{n}))}.
\end{align*}
Taking the supremum over all $\ph$ with $L^\infty$-norm less or equal $1$, we conclude \eqref{e:BV}.
\end{proof}

\subsubsection{Maximal Function truncation}
Here, we show how we determine the Lipschitz approximation.
Given $\alpha>0$, we set
\begin{gather*}
M_T (x) := \sup_{B_s^m (x)\subset B_r^m} \E (T,\cyl_s (x)),\\
K := \big\{x\in B_r^m:M_T(x)\leq E^{2\,\alpha}\big\}\quad\text{and}\quad
L:=\left\{x\in
B_r^m: M_T(x)>E^{2\,\alpha}/2^m\right\}.
\end{gather*}

\begin{lemma}\label{l:LipA}
Let $0<\alpha<\frac{1}{2}$ and $r'= r(1-E^{\frac{1-2\alpha}{m}})$.
Then, there exists $h:B_{r'}\to\R{n}$ such that:
\begin{gather}
\Lip(h)\leq C\, E^\alpha
\quad\text{and}\quad
\gr(h\vert_K)=T\res (K\times \R{n}),\label{e:lip1A}\\
|B_{s}\setminus K|\leq \frac{5^m}{E^{2\,\alpha}}\,e (T, (L\cap B_{s+ r E^{\frac{1-2\alpha}{m}}})\times \R{n}, \vec{e}_m)
\leq 5^m\,E^{1-2\,\alpha}\,r^m\quad\;\forall s\leq r'.\label{e:bad}
\end{gather}
\end{lemma}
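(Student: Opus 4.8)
The plan is to prove the two assertions of the lemma separately: first the measure bound \eqref{e:bad} by a Vitali-type covering argument, and then the construction of $h$ together with its Lipschitz estimate \eqref{e:lip1A} out of the $BV$ estimate of Lemma~\ref{l:A BV}. The set $K$ is exactly the set on which every admissible cylindrical excess is at most $E^{2\alpha}$, so over $K$ the current should be an honest multiplicity-one Lipschitz graph, whereas the bad set $B_s\setminus K$ — where the truncation takes place — is what has to be controlled in measure by the total excess.

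For \eqref{e:bad}: given $x\in B_s\setminus K$, the definition of $M_T$ provides a radius $t_x$ with $B^m_{t_x}(x)\subset B^m_r$ and $\E(T,\cyl_{t_x}(x))>E^{2\alpha}$. Since excess measures are monotone in the domain,
\[
\omega_m\,t_x^m\,E^{2\alpha}< e(T,\cyl_{t_x}(x),\vec e_m)\le e(T,\cyl_r,\vec e_m)=\omega_m\,r^m\,E,
\]
so $t_x< r\,E^{(1-2\alpha)/m}$ for every such $x$. Applying the $5r$-covering lemma to $\{B^m_{t_x}(x):x\in B_s\setminus K\}$ I extract a countable disjoint subfamily $\{B^m_{t_i}(x_i)\}_i$ whose five-fold dilations still cover $B_s\setminus K$, hence
\[
|B_s\setminus K|\le 5^m\sum_i\omega_m t_i^m<\frac{5^m}{E^{2\alpha}}\sum_i e(T,\cyl_{t_i}(x_i),\vec e_m).
\]
The base balls being disjoint, the cylinders $\cyl_{t_i}(x_i)$ are mutually singular for $e(T,\cdot,\vec e_m)$; moreover, again by monotonicity of the excess measure (now upon doubling the radius), every point of $\bigcup_i B^m_{t_i}(x_i)$ belongs to $L$, and by the radius bound $\bigcup_i B^m_{t_i}(x_i)\subset B^m_{s+rE^{(1-2\alpha)/m}}$. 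Adding up, $\sum_i e(T,\cyl_{t_i}(x_i),\vec e_m)\le e(T,(L\cap B_{s+rE^{(1-2\alpha)/m}})\times\R n,\vec e_m)$, which is the first inequality of \eqref{e:bad}; the second one follows from $e(T,(L\cap B_{s+rE^{(1-2\alpha)/m}})\times\R n,\vec e_m)\le e(T,\cyl_r,\vec e_m)=\omega_m r^m E$.

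For \eqref{e:lip1A}: on $K$ every admissible cylinder $\cyl_s(x)$ satisfies $e(T,\cyl_s(x),\vec e_m)\le\omega_m s^m E^{2\alpha}$ and $\|T\|(\cyl_s(x))\le 2\omega_m s^m$, so Lemma~\ref{l:A BV} yields $|D\Phi_\psi|(B^m_s(x))\le C\,E^{\alpha}s^{m}$ for all such balls and all $\psi\in C^\infty_c(\R n)$ with $\|D\psi\|_\infty\le 1$. This uniform bound on the ``gradient density'' of the height functions $\Phi_\psi$ at the points of $K$, combined with the fact that over $K$ the smallness of the excess and the area-minimality of $T$ (monotonicity forbids folding or cancellation over a point of $K$) force every slice $T_x$ to be a single Dirac mass $\a{h(x)}$, gives that $x\mapsto h(x)$ is Lipschitz on $K$ with constant $\le C\,E^{\alpha}$ — the constant coming straight from the bound on $|D\Phi_\psi|$ — and that $T\res(K\times\R n)=\gr(h|_K)$. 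I then extend $h$ to all of $B_{r'}$ by a Lipschitz extension theorem, losing only a dimensional factor in the constant.

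I expect the real difficulty to be precisely this last point: upgrading the integral $BV$ control of the $\Phi_\psi$ to the \emph{pointwise} statement that over $K$ the current is a genuine multiplicity-one Lipschitz graph, and that the set where $T$ has several sheets (or cancellation) is contained in $B_s\setminus K$. This is where area-minimality enters essentially — the mere smallness of $\|T\|(\cyl_r)-\omega_m r^m$ would not suffice — and where one must be careful about the exact relation between the maximal function $M_T$ and the slice structure of $T$. By comparison, the covering argument for \eqref{e:bad} is routine once the radius bound $t_x< r E^{(1-2\alpha)/m}$ is in hand.
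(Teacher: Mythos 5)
Your treatment of \eqref{e:bad} is correct and is exactly the argument the paper has in mind: the paper merely cites the standard maximal function estimate, while you spell out the Vitali covering, and the two key points — the radius bound $t_x< r\,E^{(1-2\alpha)/m}$ coming from $\omega_m t_x^m E^{2\alpha}< e(T,\cyl_{t_x}(x),\vec e_m)\le \omega_m r^m E$, and the inclusion of the covering balls in $L\cap B_{s+rE^{(1-2\alpha)/m}}$ after doubling the radius — are the right ones (the small boundary issue of whether the doubled balls stay inside $B_r$ is glossed over at the same level as in the paper).

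The gap is in the part you yourself flag as the ``real difficulty'', namely that over $K$ the slices are single Dirac masses and that the $BV$ control upgrades to a pointwise Lipschitz bound: you assert both, and the mechanism you propose for the first (area-minimality plus the monotonicity formula ``forbidding folding'') is not what closes it — in fact minimality is never used in this lemma. The paper's argument is elementary: from the slicing inequality $\int_A\|T_x\|\,dx\le\|T\|(A\times\R{n})$ and Lebesgue differentiation one gets $\|T_x\|\le 1+M_T(x)<2$ for a.e.\ $x\in K$ (by the very definition of $K$ and $E<1$), while $\proj_\#T=\a{B^m_r}$ forces each integer $0$-current $T_x$ to have degree one, so integrality gives $T_x=\delta_{g(x)}$; no monotonicity, no minimality. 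For the Lipschitz estimate one does not stop at the ball-by-ball bound $|D\Phi_\psi|(B_s(x))\le CE^\alpha s^m$: one converts Lemma~\ref{l:A BV} into the pointwise bound $M(|D\Phi_\psi|)(x)^2\le 2M_T(x)^2+2M_T(x)\le C\,E^{2\alpha}$ on $K$ — using that $\|T\|(\cyl_s(x))=e(T,\cyl_s(x),\vec e_m)+\omega_m s^m$ thanks to the projection constraint — and then invokes the standard estimate $|u(x)-u(y)|\le C\big(M(|Du|)(x)+M(|Du|)(y)\big)\,|x-y|$ at Lebesgue points (cf.\ \cite[6.6.2]{EG}), finally taking a supremum over a countable dense family of $\psi$ with $\|D\psi\|_\infty\le 1$ to pass from $|\psi(g(x))-\psi(g(y))|$ to $|g(x)-g(y)|\le CE^\alpha|x-y|$, and extends by the Lipschitz extension theorem. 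So your outline has the right ingredients (Lemma~\ref{l:A BV}, Dirac-mass slices, extension), but the decisive steps are left unproved and attributed to the wrong tool: what ``forbids folding'' over $K$ is the truncation by $M_T$ together with $\proj_\#T=\a{B^m_r}$ and integrality of the slices, not the minimality of $T$.
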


\begin{proof}
Note that $x\notin K$ if and only if there exists $0<r_x<r\,E^{\frac{1-2\alpha}{m}}$ such that
\[
E^{2\,\alpha}<\frac{e(T,\cyl_{r_x}(x),\vec e_m)}{\omega_m\,r_x^m}
\leq \frac{e(T,\cyl_{r},\vec e_m)}{\omega_m\,r_x^m}
= \frac{r^m\,E}{r_x^m}.
\]
Hence, recalling the standard Maximal Function estimate (see, for example, \cite{St}),
we deduce easily \eqref{e:bad}.

In order to define the approximation $h$,
recall that
$\int_A \|T_x\|\leq \|T\|(A\times \R{n})$ for every open set A (cp.~to \cite[Lemma 28.5]{Sim}).
Therefore,
\[
\|T_x\|\leq\lim_{r\to 0}\frac{\|T\|(\cyl_r(x))}{\omega_m\,r^m}\leq M_T(x)+1 \quad\text{for almost every }x.
\]
Hence, since $E<1$ and $\proj_\sharp T = \a{B_r}$, we have that $1\leq\|T_x\|<2$ for almost every
point in $K$.
Thus, $T_x=\delta_{g(x)}$ for some measurable function $g$.

By Lemma~\ref{l:A BV}, for every $\psi\in C^\infty_c(\R{n})$ with $\|D\psi\|_{L^\infty}\leq1$,
\begin{align*}
M(|D\Phi_\psi|)(x)^2&=\sup_{0<s\leq r-|x|}\l(\frac{|D\Phi_\psi|(B_s (x))}
{|B_s|}\r)^2\leq
\sup_{0<s\leq r-|x|}\frac{2\,e(T,\cyl_s(x),\vec e_m)\,\mass(T,\cyl_s(x))}{|B_s|^2}\n\\
&= \sup_{0<s\leq r-|x|}\frac{2\,e(T,\cyl_s(x),\vec e_m)\big(e(T,\cyl_s(x),\vec e_m)+|B_s|\big)}
{|B_s|^2}\n\\
&\leq 2\,M_T(x)^2+2\,M_T(x)\leq C\, M_T(x).
\end{align*}
Therefore, by a standard argument (see, for instance, \cite[6.6.2]{EG}),
this implies the existence of 
a constant $C>0$ such that, for every $x,y\in K$ Lebesgue points of $\Phi_\psi$,
\[
|\Phi_\psi(x)-\Phi_\psi(y)|=\l|\psi(g(x))-\psi(g(y))\r|\leq C\,E^{\alpha}\,|x-y|.
\]
Taking the supremum over a dense, countable set of $\psi\in C_c^\infty(\R{n})$ with $\norm{D\psi}{\infty}\leq 1$, we deduce that
\begin{equation}\label{e:lip g}
|g(x) - g(y)|\leq C\,E^{\alpha}\,|x-y|.
\end{equation}
We can hence extend $g$ to all $B_{r'}$, obtaining a Lipschitz function $h$ with Lipschitz bound $C E^\alpha$.
Clearly, since $h\vert_K=g\vert_K$ and $T_x=\delta_{g(x)}$,
we conclude
$\gr(h\vert_K)=T\res (K\times \R{n})$.
\end{proof}

\begin{remark}
Note that from Lemma~\ref{l:LipA} it follows that
\begin{equation}\label{e:energy}
\int_{B_{r'}}|Dh|^2\leq C\,E \,r^m
\end{equation}
\begin{equation}\label{e:mass_of_diff}
\mbox{and}\quad \|T - \gr (h)\| (\cyl_{r'}) \leq C E^{1-2\alpha}\, r^m\, ,
\end{equation}
for some dimensional constant $C>0$
\end{remark}

\subsubsection{Proof of Proposition~\ref{p:approx}}
We start fixing positive constants $\alpha, \sigma, \theta, \gamma$ such that
\begin{equation}\label{e:constants}
\sigma,\gamma<\frac{1-2\,\alpha}{2m}\, ,\quad 2\,\sigma < \theta<\gamma
\quad\text{and}\quad \frac{1-2\,\alpha-\sigma}{m-1}\,m>1.
\end{equation}
Consider the Lipschitz approximation $h$ given by Lemma~\ref{l:LipA} corresponding to the exponent $\alpha$ (we keep the same notation as above).
By a slicing argument, we find $s\in [r(1-E^\sigma),r(1-E^\theta)]$ such that
\begin{equation}\label{e:theta?}
\int_{B_{s+ r E^\theta}\setminus B_{s-2 r E^\theta}}|Dh|^2\leq C\, E^{\sigma}\int_{B_{r'}}|Dh|^2\leq C\,E^{1+\sigma}\,r^m\,
\end{equation}
\begin{equation}\label{e:extraslice}
\mbox{and}\qquad
\mass ((T-\gr (h))\res \partial B_s)\leq C\, E^{1-2\alpha -\sigma}r^{m-1}\, .
\end{equation}
(With a slight abuse of notation, we write
$(T-\gr(h))\res \partial B_s$ for
$\la T-\gr(h), \ph,s\ra$, where $\ph (x) = |x|$.)

Moreover, setting for a standard kernel $\ph$
\begin{equation*}
g(x)=
\begin{cases}
h*\ph_{rE^\gamma} & \text{if } x\in B_{s-rE^\theta},\\
\frac{|x|-s+r\,E^\theta}{r\,E^\theta}\,h(x)+\frac{s-|x|}{r\,E^\theta}\,h*\ph_{rE^\gamma}(x) & \text{if } x\in B_s\setminus B_{s-rE^\theta},
\end{cases}
\end{equation*}
it is simple to verify that $\Lip(g)\leq CE^\alpha$ and, furthermore,
\begin{equation}\label{e:key}
\int_{B_s}|Dg|^2\leq \int_{B_s\setminus L}|Dh|^2+C\,E^{1+\delta}\,r^m,
\end{equation}
for some $\delta>0$, where $L$ is as in Lemma~\ref{l:LipA}.
Indeed, we can estimate the energy of $g$ in two steps as follows.
First in the annulus $B_s\setminus B_{s-rE^\theta}$:
\begin{align*}
&\int_{B_s \setminus B_{s-rE^\theta}}|Dg|^2\leq C\,\int_{B_{s+ r E^\gamma}\setminus B_{s-r(E^\theta+E^\gamma)}}|Dh|^2+
\frac{C}{r^2\,E^{2\,\theta}}\int_{B_s\setminus B_{s-rE^\theta}}|h-h*\ph_{rE^\gamma}|^2\notag\\
&\leq C\,\int_{B_{s+rE^\theta}\setminus B_{s-2rE^\theta}}|Dh|^2+
\frac{C\,r^2\,E^{2\,\gamma}}{r^2\,E^{2\,\theta}}\int_{B_r}|Dh|^2
\stackrel{\eqref{e:energy}}{\leq} C\,(E^{1+\sigma}+E^{1+2\,\gamma-2\,\theta})\,r^m\, .
\end{align*}
Hence, in $B_{s-rE^\theta}$:
\begin{align*}
\int_{B_{s-rE^\theta}}|Dg|^2&\leq \int_{B_{s-rE^\theta}}(|Dh|*\ph_{rE^\gamma})^2
=\int_{B_{s-rE^\theta}}\big((|Dh|\,{\bf 1}_{B_s\setminus L}+|Dh|\,{\bf 1}_{B_s\cap L})*\ph_{rE^\gamma}\big)^2\\
&\leq 2\int_{B_{s-rE^\theta}}((|Dh|\,{\bf 1}_{B_s\setminus L})*\ph_{rE^\gamma})^2+
2\int_{B_{s-rE^\theta}}((|Dh|\,{\bf 1}_{B_s\cap L})*\ph_{rE^\gamma})^2,
\end{align*}
where the first term is estimated in turn as
\[
\int_{B_{s-rE^\theta}}((|Dh|\,{\bf 1}_{B_s\setminus L})*\ph_{rE^\gamma})^2
\leq \int_{B_{s}}(|Dh|\,{\bf 1}_{B_s\setminus L})^2
\leq \int_{B_{s}\setminus L}|Dh|^2\,
\]
and the second one as follows,
\begin{align*}
\int_{B_{s-rE^\theta}}(|Dh|\,{\bf 1}_{B_s\cap L}*\ph_{rE^\gamma})^2&\leq
\|Dh\|_{L^\infty}\,\|{\bf 1}_{B_s\cap L}*\ph_{rE^\gamma}\|^2_{L^2}
\leq C\,E^{2\,\alpha}\,\|\ph_{rE^\gamma}\|^2_{L^2}\,\|{\bf 1}_{B_s\cap L}\|^2_{L^1}\\
&=
C\,E^{2-m\,\gamma-2\,\alpha}\,r^m.
\end{align*}
Hence, by the choice of the constants in \eqref{e:constants}, \eqref{e:key} follows.

Next, we observe that, from $\de \big(T-\gr(h)\big)\res \de B_r=0$,
by the isoperimetric inequality and \eqref{e:extraslice},
there is an integer rectifiable current $R$ such that
$$
\de R=\big(T-\gr(h)\big)\res \de B_s
\quad\text{and}\quad
\|R\|\leq CE^{\frac{(1-2\alpha-\sigma)m}{m-1}}\,r^m.
$$
Moreover, being $g\vert_{\de B_s}=h\vert_{\de B_s}$,
we can use $\gr(g)+R$ as competitor for the current $T$. In this way we obtain,
for a suitable $\tau>0$,
\begin{equation}\label{e:massa1}
\|T\|( \cyl_s)\leq 
|B_s|+\int_{B_s}\frac{|Dg|^2}{2}+C\,E^{1+\tau}\, r^m\stackrel{\eqref{e:key}}
{\leq}|B_s|+
\int_{B_s\setminus L}\frac{|Dh|^2}{2}+C\,E^{1+\tau}\,r^m.
\end{equation}
On the other hand, again using the Taylor expansion for the area functional,
\begin{align}\label{e:massa2}
\|T\|(\cyl_s)&=\|T\|((B_s\cap L)\times \R{n})+
\|\gr(h\vert_{B_s\setminus L})\|\n\\
&\geq \|T\|((B_s\cap L)\times \R{n})+ |B_s\setminus L|+
\int_{B_s\setminus L}\frac{|Dh|^2}{2}-C\, E^{1+\tau}\,r^m.
\end{align}
Hence, from \eqref{e:massa1} and \eqref{e:massa2}, we deduce
\begin{equation}\label{e:walm}
e(T,(B_s\cap L)\times\R{n},\vec e_m)\leq C\,E^{1+\tau}\,r^m.
\end{equation}

We are now in the position to conclude the proof of Proposition~\ref{p:approx}.
Let $\beta<\alpha$ be such that $2\beta<\tau$ and let $f$ be the Lipschitz approximation 
given by Lemma~\ref{l:LipA} corresponding to $\beta$.
Clearly, \eqref{e:approx1} follows once we take $\eta\leq\beta$.
Moreover, since $\{M_T>E^\beta/2^m\}\subset L$, from \eqref{e:bad} and \eqref{e:walm} we get \eqref{e:approx2} if $\eta$ is accordingly chosen.
Finally, for \eqref{e:approx3}, we use again the Taylor expansion of the area functional to conclude:
\begin{align*}
\left| \|T\|(\cyl_s) - \omega_m \,s^m-
\int_{B_s} \frac{|Df|^2}{2}\right|
\leq{}& e(T,(B_s\cap L)\times\R{n},\vec e_m)
+\int_{B_s\cap L} \frac{|Df|^2}{2} + C\, E^{1+\beta}\, r^m\\
\leq& C\, (E^{1+\tau}+E^{1+\beta})\,r^m+C\, E^{2\beta}|B_1\cap L|\leq C\,E^{1+\eta}\,r^m.\notag
\end{align*}

\subsection{Convergence to harmonic functions}\label{s:harm}
Let $(T_l)_{l\in\N}$ be a sequence of minimizing $m$-currents in $B_1\subset\R{m+n}$
such that
\begin{equation}\label{e:H-contro}
\partial T_l = 0\; \text{in}\; B_1,
\quad \theta (T_l, 0)=1
\quad\text{and}\quad \|T_l\| (B_1)\leq \omega_m+\eps_l\quad\text{with}\;\eps_l\to0.
\end{equation}
It is immediate to see that, up to subsequences, the $T_l$ converge in the sense of currents to a flat $m$-dimensional disk centered at the origin.
By the monotonicity formula, there is also Hausdorff convergence of the supports
of $T_l$ to the flat disk
in every compact set $C\subset\subset B_1$:
\[
\lim_{l\to+\infty} \sup_{x\in C\cap\supp(T_l)} \dist(x,\R{m}\times\{0\})=0.
\]
In particular, there exist radii $r_l\to 1$ such that
\[
\de \big(T_l\res \cyl_{r_l}\big)=0\quad \text{in}\;\; \cyl_{r_l}
\quad\text{and}\quad
\proj_\# \big(T_l\res \cyl_{r_l}\big)=\a{B_{r_l}^m}.
\]

In the following proposition we prove the convergence to a harmonic function for the
rescaled Lipschitz approximations.

\begin{propos}\label{p:harm}
Let $T_l$ be as in \eqref{e:H-contro}, $E_l:=e(T_l,\cyl_{r_l},\vec e_m)$ and $f_l: B_{r_l(1-E_l^\eta)}\to\R{n}$ the
approximations in Proposition~\ref{p:approx}.
The rescaled functions $u_l:=\frac{f_l-\bar f_l}{\sqrt{E_l}}$,
where $\bar f_l=\fint f_l$ are the averages, converge in $W^{1,2}_{loc}$ to a
harmonic function $u$.
\end{propos}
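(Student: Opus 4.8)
\emph{Plan.} I would run the classical three-step argument for De Giorgi's harmonic approximation, now fed by Proposition~\ref{p:approx} and by the first-variation inequality established in the proof of Lemma~\ref{l:laplace}. Throughout, convergence is understood along a (not relabelled) subsequence. \emph{Step 1 (compactness):} by the energy estimate for the Lipschitz approximation (cf.\ \eqref{e:energy}, which holds equally for $f_l$) one has $\int|Df_l|^2\le C\,E_l\,r_l^m$, hence $\int|Du_l|^2\le C$; since $u_l$ has zero average by construction, Poincar\'e's inequality yields a uniform $W^{1,2}$ bound for $u_l$ on every ball $B_\rho$, $\rho<1$ (recall $r_l\to1$), so up to a subsequence $u_l\weak u$ in $W^{1,2}_{loc}(B_1)$ and $u_l\to u$ in $L^2_{loc}(B_1)$. \emph{Step 2 ($u$ harmonic):} arguing verbatim as in the derivation of \eqref{e:step1} (which uses only \eqref{e:variation} and Proposition~\ref{p:approx}), $\bigl|\int Df_l\cdot D\varphi\bigr|\le C\,E_l^{1+\eta}\,\|D\varphi\|_\infty$ for every $\varphi\in C^1_c(B_1,\R{n})$; dividing by $\sqrt{E_l}$ gives $\bigl|\int Du_l\cdot D\varphi\bigr|\le C\,E_l^{1/2+\eta}\,\|D\varphi\|_\infty\to0$, and passing to the limit via Step~1 shows $\int Du\cdot D\varphi=0$, i.e.\ $u$ is harmonic.

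\emph{Step 3 (strong convergence): the competitor.} Fix $\rho_0<1$. For every $\rho<1$, weak $W^{1,2}$ convergence on $B_\rho$ and compactness of the trace give $u_l|_{\partial B_\rho}\to u|_{\partial B_\rho}$ in $L^2(\partial B_\rho)$; in addition, after passing to a further subsequence, a slicing plus Borel--Cantelli argument based on \eqref{e:mass_of_diff} lets me fix $\rho\in[\rho_0,2\rho_0]$ for which the slices $\langle T_l-\gr(f_l),|x|,\rho\rangle$ are well defined with mass so small that the isoperimetric inequality produces integer rectifiable $R_l$ with $\de R_l=\langle T_l-\gr(f_l),|x|,\rho\rangle$ and $\|R_l\|=o(E_l)$. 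Let $w_l$ be the componentwise harmonic function on $B_\rho$ with $w_l=u_l$ on $\partial B_\rho$ and put $\tilde f_l:=\bar f_l+\sqrt{E_l}\,w_l$. Then $\tilde f_l=f_l$ on $\partial B_\rho$ (since $u_l=(f_l-\bar f_l)/\sqrt{E_l}$), so $\langle T_l-\gr(\tilde f_l),|x|,\rho\rangle=\langle T_l-\gr(f_l),|x|,\rho\rangle$ and $\gr(\tilde f_l)+R_l$ is a competitor for $T_l$ in $\cyl_\rho$; moreover, by the Poisson-kernel estimate $\|Dw_l\|_\infty\le C\,\Lip\bigl(u_l|_{\partial B_\rho}\bigr)\le C\,E_l^{\eta-1/2}$, whence $\Lip(\tilde f_l)\le C\,E_l^{\eta}\le1$ and $\int_{B_\rho}|D\tilde f_l|^4\le\|D\tilde f_l\|_\infty^2\int_{B_\rho}|D\tilde f_l|^2\le C\,E_l^{1+2\eta}$.

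\emph{Step 3: conclusion.} Minimality of $T_l$, the Taylor expansion of the area functional, and \eqref{e:approx3} (whose proof yields the same estimate with $B_\rho$ in place of $B_s$, for every $\rho<s$) then give
\[
\frac{E_l}{2}\int_{B_\rho}|Du_l|^2 \le \|T_l\|(\cyl_\rho)-\omega_m\rho^m+o(E_l) \le \frac12\int_{B_\rho}|D\tilde f_l|^2+o(E_l) = \frac{E_l}{2}\int_{B_\rho}|Dw_l|^2+o(E_l).
\]
Conversely $\int_{B_\rho}|Dw_l|^2\le\int_{B_\rho}|Du_l|^2$ by the Dirichlet principle, and $\int_{B_\rho}|Du_l|^2=\int_{B_\rho}|Dw_l|^2+\int_{B_\rho}|D(u_l-w_l)|^2$ since $u_l-w_l\in W^{1,2}_0(B_\rho)$ and $w_l$ is harmonic; hence $\int_{B_\rho}|D(u_l-w_l)|^2\to0$, i.e.\ $u_l-w_l\to0$ in $W^{1,2}(B_\rho)$. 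Finally $w_l$ is bounded in $W^{1,2}(B_\rho)$ and harmonic, so it converges in $C^\infty_{loc}(B_\rho)$ to a harmonic function with boundary trace $u|_{\partial B_\rho}$, which is $u$ by uniqueness; thus $w_l\to u$ in $W^{1,2}(B_{\rho'})$ for every $\rho'<\rho$, and adding up $u_l=(u_l-w_l)+w_l\to u$ in $W^{1,2}(B_{\rho'})$. Letting $\rho_0\uparrow1$ gives convergence in $W^{1,2}_{loc}(B_1)$.

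\emph{Main obstacle.} The crux is the competitor in Step~3: one must build, over a good radius, a Lipschitz function with small Lipschitz norm whose graph matches $\gr(f_l)$ on $\partial B_\rho$ and whose interior energy is that of the harmonic extension, and then absorb the small boundary mismatch between $T_l$ and $\gr(\tilde f_l)$ via the isoperimetric inequality. Everything else -- the first variation and the mass/energy comparison \eqref{e:approx3} -- is already at hand, and all Taylor-expansion errors are $o(E_l)$ and thus vanish after dividing by $E_l$.
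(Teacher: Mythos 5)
Your overall architecture differs from the paper's in a reasonable way: the paper proves the proposition by contradiction, extracting a better competitor $v$ if either the energy fails to converge or $u$ is not harmonic, whereas you argue directly, getting harmonicity of $u$ from the first-variation estimate \eqref{e:step1} (divided by $\sqrt{E_l}$) and then strong convergence by harmonic replacement. Steps 1 and 2 are fine, and Step 2 is in fact a clean shortcut. The problem is in Step 3, at exactly the point you call the crux. Your competitor is $\tilde f_l=\bar f_l+\sqrt{E_l}\,w_l$ with $w_l$ the harmonic extension of $u_l|_{\de B_\rho}$, and everything rests on the claimed ``Poisson-kernel estimate'' $\|Dw_l\|_\infty\leq C\,\Lip\bigl(u_l|_{\de B_\rho}\bigr)$. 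This estimate is false: the harmonic extension of a merely Lipschitz boundary datum on a ball need not have bounded gradient (for the datum $|\theta|$ on the disk the radial derivative blows up like $\log\frac{1}{1-r}$); Lipschitz data only give $|Dw_l(x)|\leq C\,\Lip(u_l|_{\de B_\rho})\,\log\frac{C}{\dist(x,\de B_\rho)}$. Consequently $\Lip(\tilde f_l)\leq C E_l^\eta$ fails, and the error control collapses: with the correct logarithmic bound one only gets $\int_{B_\rho}|D\tilde f_l|^4\leq C\,E_l^{4\eta}$, which is \emph{not} $o(E_l)$ for the small $\eta$ of Proposition~\ref{p:approx}, so the Taylor expansion of the area of $\gr(\tilde f_l)$ (whose higher minors are no longer pointwise small near $\de B_\rho$) does not yield $\|\gr(\tilde f_l)\|(\cyl_\rho)\leq \omega_m\rho^m+\frac12\int|D\tilde f_l|^2+o(E_l)$, and the central inequality $\int_{B_\rho}|Du_l|^2\leq\int_{B_\rho}|Dw_l|^2+o(1)$ is not established. (There is also a secondary issue: without a Lipschitz bound one must separately justify that $\gr(\tilde f_l)$ is an admissible integral competitor whose boundary is the slice of $\gr(f_l)$.)

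The repair is essentially the paper's device, which you would need to import: do not use the harmonic extension directly, but a Lipschitz substitute — either mollify the harmonic competitor at scale $E_l^\gamma$ or approximate it in energy by a Lipschitz function $w$ — and glue it to $u_l$ across a thin annulus $B_s\setminus B_{s-\delta}$ by linear interpolation, choosing the radius $s$ by a Fatou/averaging selection so that $\int_{\de B_s}|Du_l|^2$, $\int_{\de B_s}|u_l-u|^2$ and the slice mass of $T_l-\gr(f_l)$ are simultaneously controlled. This keeps the interpolant's Lipschitz constant of order $E_l^{\eta-1/2}$, so that after multiplying by $\sqrt{E_l}$ the competitor is $C E_l^\eta$-Lipschitz and all Taylor-expansion and gluing errors are genuinely $o(E_l)$; with that modification your direct argument goes through and delivers the same conclusion as the paper's contradiction scheme.
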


\begin{proof}
Note that, by \eqref{e:energy} it follows that $\sup_l \int_{B_{r_l}}|Du_l|^2 <\infty$.
Hence, since $\fint u_l=0$, by the
Sobolev embedding and the Poincar\'e inequality,
there exists a function $u:B_1\to\R{n}$ such that,
for every $s<1$, $u_l\to u$ in $L^2(B_s)$ and $Du_l\weak Du$ in $L^2(B_s)$.

Set $D_{s}=\liminf_{l}\int_{B_s}|Du_l|^2$.
If the proposition does not hold, for some $s<1$, then
\begin{itemize}
\item[(i)] either $\int_{B_s}|Du|^2<D_s$,
\item[(ii)] or $u|_{B_s}$ is not harmonic.
\end{itemize}
Under this assumption, we can find $s_0>0$ such that, for every $s\geq s_0$,
there exists $v\in W^{1,2}(B_{s},\R{n})$ with
\begin{equation}\label{e:non minimal}
v\vert_{\de B_{s}}=u\vert_{\de B_{s}}
\quad\text{and}\quad
\gamma_s:= D_s - \int_{B_s}|Dv|^2>0.
\end{equation}

With a slight abuse of notation, we write
$(T_l-\gr(f_l))\res \partial \cyl_r$ for
$\la T_l-\gr(f_l), \ph,r\ra$, where $\ph (z,y) = |z|$. Consider
the function $\psi_l$ given by
\[
\psi_l(r) := E_l^{-1}\mass \big((T_l-\gr(f_l))\res \partial \cyl_r\big)
+ \int_{\de B_r}|Du_l|^2 + \int_{\de B_r}|Du|^2 
+\frac{\int_{\de B_r} |u_l- u|^2}{\int_{B_{s_l}}|u_l-u|^2},
\]
Since from the estimates on the Lipschitz approximation, one gets
\begin{align*}
\|T_l\|(\cyl_{s_l})-\|\gr(f_l)\|(\cyl_{s_l}) \leq C E_l^{1+\eta},
\end{align*}
$\liminf_l \int_{s_0}^{s_l} \psi_l (r)\, dr < \infty$.
Therefore, by Fatou's Lemma, there is $s\in (s_0,1)$ and a subsequence, not relabelled,
such that $\lim_l \psi_l (s) < \infty$. It follows that:
\begin{itemize}
\item[(a)] $\int_{\de B_s}|u_l-u|^2\to 0$,
\item[(b)] $\int_{\de B_s}|Du_l|^2 + \int_{\de B_s}|Du|^2\leq M$ for some $M<\infty$,
\item[(c)] $\|(T_l-\gr(u_l))\res\de B_s\|\leq C\,E_l$.
\end{itemize}
Once fixed $s$, we approximate $v$ by a Lipschitz function $w$ such that:
\[
\int_{B_s}|Dw|^2\leq \int_{B_s}|Dv|^2+\theta,\quad
\int_{\de B_s}|Dw|^2\leq \int_{\de B_s}|Dv|^2+\theta\quad\text{and}\quad
\int_{\de B_s}|w-v|^2 \leq \theta,
\]
where $\theta>0$ will be chosen later.
Next, for every given $\delta>0$ (also to be chosen later),
define $\xi_l$ via a linear interpolation so that
\[
\Lip(\xi_l)\leq CE_l^{\eta-1/2},
\quad\xi_l|_{\de B_s}=u_l|_{\de B_s} \quad \xi_l|_{B_{s-\delta}} = w|_{B_{s-\delta}}\, .
\]
It is easy to see that this can be done so that the following estimates hold:
\begin{align*}
\int_{B_s}|D\xi_l|^2
&\leq \int_{B_s}|Dw|^2+C \delta\,\int_{\de B_s}|Dw|^2+C \delta\,\int_{\de B_s}|Du_l|^2
+C\,\delta^{-1}\,\int_{\de B_{s}}|w-u_l|^2\\
&\stackrel{(a),(b)}{\leq} \int_{B_s} |Dv|^2 + \theta + C \,\delta\, M + C \,\delta^{-1} \theta .
\end{align*}
We choose first $\delta$ and then $\theta$ so to guarantee that
\begin{equation}\label{e:guadagno finale}
\limsup_l \int |D\xi_l|^2 \;\leq\; \int_{B_s} |Dv|^2 + \frac{\gamma_s}{2}\, .
\end{equation}

The functions $\xi_l$ give the desired contradiction.
Set $z_l:=\sqrt{E_l}\, \xi_l$ and consider the current 
$Z_l := \gr (\xi_l)$. Since $z_l|_{\de B_s} = f_l|_{\de B_s}$,
$\partial Z_l = \gr (f_l) \res \de B_s$. Therefore, from (c),
$\|\partial (T_l\res B_s - Z_l)\|\leq C E_l$.
From the isoperimetric inequality (see \cite[Theorem~30.1]{Sim}),
there exists an integral current $R_l$ such that
\[\partial R_l= \partial (T_l\res \cyl_s - Z_l)\quad\text{and}\quad 
\|R_l\| \leq C E_l^{m/(m-1)}.
\]
Set finally $W_l = T_l \res (\cyl_{r_l}\setminus \cyl_s) + Z_l + R_l$. By construction, it holds
$\partial W_l = \partial T_l$.
The Taylor expansion of the area functional and the various
estimates achieved give:
\begin{align*}
\limsup_l \frac{\|W_l\| - \|T_l\|}{E_l}
& \leq \limsup_l \frac{\|W_l\| - \|\gr(f_l)\|+C\,E_l^{1+\eta}}{E_l}\n\\ 
&\leq
\limsup_l E_l^{-1} \left\{\|R_l\| + \int_{B_{s}}\frac{|Dz_l|^2}{2} - \int_{B_{r}}\frac{|Df_l|^2}{2}\right\}\n\\
&\leq \limsup_l \int_{B_s} \frac{|D\xi_l|^2}{2} - D_s \;\leq\; \int_{B_s} |Dv|^2 + \frac{\gamma_s}{2} - D_s
\;\leq\; - \frac{\gamma_s}{2} < 0\, .
\end{align*}
For $l$ large enough this last inequality contradicts the minimality of the current
$T_l$.
\end{proof}

\begin{remark}\label{r:energy}
Note the following easy corollary of Proposition~\ref{p:approx}:
\[
\int_{B_s}|Df|^2=2\,E\,\omega_m\,r^m+C\,E^{1+\eta}\,r^m.
\]
Hence, in particular, the harmonic function $u$ in Proposition~\ref{p:harm} satisfies
\[
\int_{B_1}|Du|^2\leq 2\,\omega_m.
\]
\end{remark}

\subsection{Decay estimate}
We start with the following technical lemma.

\begin{lemma}\label{l:taylor}
Let $f:B_s\to\R{n}$ be the Lipschitz approximation in
Proposition~\ref{p:approx}
and $S$ the integer current associated to its graph.
If $\tau$ is the unitary $m$-vector given by
\begin{equation}\label{e:tau}
\tau
=\frac{(e_1+A\,e_1)\wedge\cdots\wedge(e_m+A\,e_m)}{\|(e_1+A\,
e_1)\wedge\cdots\wedge(e_m+A\,e_m)\|},\quad\text{with }\; A=\fint_{B_s}Df,
\end{equation}
then, for every $t\leq s$,
\[
e(S,\cyl_t,\tau)=\int_{B_t}\frac{|Df-A|^2}{2}+CE^{1+\eta}.
\]
\end{lemma}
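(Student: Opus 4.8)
The plan is to read off the conclusion from a Taylor expansion of the (rotated) area integrand. Since $\vec S$ and $\tau$ are unit simple $m$-vectors, $|\vec S-\tau|^{2}=2(1-\langle\vec S,\tau\rangle)$, so by the definition of the excess measure
\[
e(S,\cyl_t,\tau)=\int_{\cyl_t}\big(1-\langle\vec S,\tau\rangle\big)\,d\|S\|.
\]
The first step is to pass to graph coordinates. For $P\in\R{m\times n}$ set $v(P):=(e_1+Pe_1)\wedge\cdots\wedge(e_m+Pe_m)$; since each $e_i$ is orthogonal to every vector of $\pi_0^{\perp}$, the Gram determinant formula gives $\langle v(P),v(Q)\rangle=\det(\Id+P^{T}Q)$, and in particular $|v(P)|^{2}=\det(\Id+P^{T}P)=1+|P|^{2}+\sum_{|\alpha|\ge2}M_\alpha(P)^{2}$. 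Because $S=\gr(f)$ one has $d\|S\|=|v(Df)|\,dx$ and $\vec S\,d\|S\|=v(Df)\,dx$, while $\tau=v(A)/|v(A)|$ directly from its definition. Hence
\[
e(S,\cyl_t,\tau)=\int_{B_t}\Big(|v(Df)|-\frac{\langle v(Df),v(A)\rangle}{|v(A)|}\Big)\,dx=\int_{B_t}G\big(Df(x)\big)\,dx,
\]
where $G(P):=|v(P)|-\det(\Id+P^{T}A)/|v(A)|$ and, more generally, $(P,Q)\mapsto|v(P)|-\det(\Id+P^{T}Q)/|v(Q)|$ is smooth near the origin.

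The heart of the matter is to expand this last function to second order. One uses $\det(\Id+P^{T}Q)=1+\trac(P^{T}Q)+O(|P|^{2}|Q|^{2})$, the expansion $|v(P)|=\big(1+|P|^{2}+O(|P|^{4})\big)^{1/2}=1+\tfrac12|P|^{2}+O(|P|^{4})$ — the crucial point being that each minor $M_\alpha$ with $|\alpha|\ge2$ is homogeneous of degree $\ge2$, so its square contributes only at order $\ge4$ — together with the analogous $1/|v(Q)|=1-\tfrac12|Q|^{2}+O(|Q|^{4})$. Multiplying out and recalling that $\trac(P^{T}Q)=\langle P,Q\rangle$ for the Frobenius product, one obtains
\[
|v(P)|-\frac{\det(\Id+P^{T}Q)}{|v(Q)|}=\frac{|P|^{2}}{2}-\langle P,Q\rangle+\frac{|Q|^{2}}{2}+h(P,Q)=\frac{|P-Q|^{2}}{2}+h(P,Q),
\]
with $h$ smooth and vanishing to fourth order at the origin, so that $|h(P,Q)|\le C(|P|^{2}+|Q|^{2})^{2}$ for $|P|,|Q|$ small. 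This bookkeeping — in particular checking that the quadratic part is exactly $\tfrac12|P-Q|^{2}$ and that the remainder is genuinely of order $\ge4$ jointly in $(P,Q)$ — is the only real obstacle; the remaining steps are routine.

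Taking $Q=A$ gives $G(P)=\tfrac12|P-A|^{2}+h(P,A)$, so it remains only to integrate over $B_t$ and control the error term. The main term contributes exactly $\int_{B_t}\tfrac12|Df-A|^{2}$. For the remainder, the Lipschitz bound \eqref{e:approx1} gives $|Df|\le CE^{\eta}$ a.e., Remark~\ref{r:energy} gives $\int_{B_s}|Df|^{2}\le CE\,r^{m}$, and hence (using $|B_s|\ge c\,r^{m}$) $|A|^{2}=\big|\fint_{B_s}Df\big|^{2}\le\fint_{B_s}|Df|^{2}\le CE$. Since $B_t\subset B_s$,
\[
\Big|\int_{B_t}h(Df,A)\Big|\le C\int_{B_t}\big(|Df|^{4}+|A|^{4}\big)\le C\,\|Df\|_{\infty}^{2}\int_{B_s}|Df|^{2}+C\,|A|^{4}r^{m}\le CE^{1+2\eta}r^{m}+CE^{2}r^{m}\le CE^{1+\eta}r^{m},
\]
where the last inequality uses $E<1$ and $0<\eta<1$. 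Combined with the previous identity this yields $e(S,\cyl_t,\tau)=\int_{B_t}\tfrac12|Df-A|^{2}+O(E^{1+\eta}r^{m})$, which is the assertion of the lemma.
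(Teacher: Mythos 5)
Your proof is correct and follows essentially the same route as the paper's: both reduce the excess to the integral over $B_t$ of the graph area integrand minus $\langle \vec S,\tau\rangle$ in graph coordinates, Taylor-expand using $\langle (e_1+Pe_1)\wedge\cdots\wedge(e_m+Pe_m),(e_1+Qe_1)\wedge\cdots\wedge(e_m+Qe_m)\rangle=\det(\Id+P^TQ)$, and control the remainder via $|Df|\leq CE^{\eta}$, $\int|Df|^2\leq CE\,r^m$ and $|A|^2\leq CE$. The only difference is organizational (you expand a single function $G(P)$ with a joint fourth-order remainder, while the paper expands $|\bar\tau|$ and the scalar product separately), which does not change the substance.
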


\begin{proof}
From the definition of excess and \eqref{e:approx2}, it follows that
\begin{equation}\label{e:ex1}
e(S,\cyl_t,\tau)=\|S\|(\cyl_t)-\int_{\cyl_t}\langle \vec S,\tau\rangle\,d\|S\|
=\int_{B_t}\frac{|Df|^2}{2}+|B_t|+C\,E^{1+\eta}-\int_{\cyl_t}\langle \vec
S,\tau\rangle\,d\|S\|.
\end{equation}
Notice that $|A|^2=|\fint Df|^2\leq \fint |Df|^2\leq CE$, thus implying
\[
|\bar\tau|=\sqrt{\la \bar \tau,\bar \tau\ra}=\sqrt{\det(\delta_{ij}+Ae_i\cdot
Ae_j)}=
\sqrt{1+|A|^2+O(|A|^4)}
=1+\frac{|A|^2}{2}+O(E^2)
\]
(where $\bar\tau = (e_1+A\, e_1)\wedge\ldots \wedge (e_m+A\, e_m)$)
and
\begin{multline*}
\langle (e_1+Df\,e_1)\wedge\cdots\wedge(e_m+Df\,e_m), \bar{\tau} \rangle=\\
=\det(\delta_{ij}+Df e_i\cdot Ae_j)=
1+Df\cdot A+O(|Df|^2|A|^2)
=1+Df\cdot A+O(E^{1+2\eta}).
\end{multline*}
Hence, from
\[
\vec S=
\frac{(e_1+Df\,e_1)\wedge\cdots\wedge(e_m+Df\,e_m)}{\|(e_1+Df\,e_1)\wedge\cdots\wedge(e_m+Df\,e_m)\|},
\]
we have that
\begin{align}\label{e:ex2}
\int_{\cyl_t}\langle \vec S,\tau\rangle\,d\|S\|&=
\int_{B_t}(1+Df\cdot A+O(E^{1+2\,\eta}))\,\left(1+\frac{|A|^2}{2}+O(E^2)\right)^{-1}\,dx\notag\\
&=\int_{B_t}\left(1+Df\cdot A-\frac{|A|^2}{2}\right)\,dx+O(E^{1+2\,\eta}).
\end{align}
Putting together \eqref{e:ex1} and \eqref{e:ex2}, we obtain the desired conclusion,
\[
e(S,\cyl_t,\tau)=\int_{B_t}\frac{|Df|^2+|A|^2-2\,Df\cdot
A}{2}+O(E^{1+\eta})=
\int_{B_t}\frac{|Df-A|^2}{2}+O(E^{1+\eta}).
\]
\end{proof}

The basic step in De Giorgi's decay estimate is the following.

\begin{propos}\label{p:basic iter}
For every $\theta>0$, there exists $\eps>0$ such that, if $T$ is an area-minimizing $m$-dimensional integer rectifiable current in $B_1$ such that
\[
\partial T = 0,
\quad \theta (T, 0)=1
\quad\text{and}\quad \|T\| (B_1)\leq \omega_m +\eps,
\]
then
\begin{equation}\label{e:basic iter}
e(T,B_{\frac{1}{2}})\leq \left(\frac{1}{2^{m+2}}+\theta\right)\,e(T,B_1).
\end{equation}
\end{propos}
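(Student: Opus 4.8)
The plan is to run a blow-up (compactness) argument, reducing the excess decay to the elementary decay of harmonic functions. Assume the statement fails for some $\theta>0$: then there are area-minimizing currents $(T_l)$ satisfying the hypotheses with $\eps=\eps_l\downarrow 0$ but with
\[
e(T_l,B_{1/2}) > \Big(\tfrac{1}{2^{m+2}}+\theta\Big)\,e(T_l,B_1)\,.
\]
Since hypotheses and conclusion are rotation invariant, I would first rotate each $T_l$ so that $e(T_l,B_1)=e(T_l,B_1,\vec e_m)$, i.e.\ $\pi_0$ realizes the spherical excess at scale $1$. Because $\|T_l\|(B_1)\to\omega_m$ forces $e(T_l,B_1)\to 0$, the currents fall into the setting of Proposition~\ref{p:harm}: writing $E_l:=e(T_l,\cyl_{r_l},\vec e_m)\to 0$ and letting $f_l$ be the Lipschitz approximations of Proposition~\ref{p:approx}, the rescalings $u_l:=(f_l-\bar f_l)/\sqrt{E_l}$ converge in $W^{1,2}_{\mathrm{loc}}$ to a harmonic function $u$, and $\int_{B_1}|Du|^2$ equals a positive dimensional constant by Remark~\ref{r:energy}, so $u$ is not constant.

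The crucial point is that having chosen $\pi_0$ optimal at scale $1$ forces the limiting average gradient to vanish: $\nabla u(0)=\fint_{B_1}Du=0$ (note $A_l:=\fint_{B_{s_l}}Df_l$ satisfies $A_l/\sqrt{E_l}\to\nabla u(0)$). Indeed, if $\nabla u(0)\neq 0$, comparing $e(T_l,B_1,\vec e_m)$ with $e(T_l,B_1,\tau_l)$ — where $\tau_l$ is the plane tilted by $A_l$ — via Lemma~\ref{l:taylor} and the smallness $\|T_l-\gr(f_l)\|(\cyl_{s_l})\lesssim E_l^{1+\eta}$ from Proposition~\ref{p:approx}, one would produce, for $l$ large, a plane strictly better than $\vec e_m$ at scale $1$, contradicting the choice of $\pi_0$. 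Since $u$ is not constant, $Du$ is then a nontrivial vector-valued harmonic function with $Du(0)=0$; expanding its components in homogeneous harmonic polynomials (which now start at degree $\ge1$) gives
\[
\int_{B_{1/2}}|Du|^2 \;\le\; \frac{1}{2^{m+2}}\int_{B_1}|Du|^2\,.
\]

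It then remains to transfer these Dirichlet energies back to the excess of $T_l$. Using Lemma~\ref{l:taylor} for $S_l:=\gr(f_l)$ and the plane $\tau_l$, the estimate $\|T_l-S_l\|(\cyl_{s_l})\lesssim E_l^{1+\eta}$, the strong convergence $u_l\to u$ in $W^{1,2}(B_{1/2})$ (as $B_{1/2}\Subset B_1$), and $A_l/\sqrt{E_l}\to 0$, one obtains
\[
\limsup_{l\to\infty}\frac{e(T_l,B_{1/2})}{E_l}\;\le\;\frac12\int_{B_{1/2}}|Du|^2\,;
\]
bounding $e(T_l,B_1)$ from below by restricting the excess integral to the part of $T_l$ that coincides with $S_l$ and minimizing over competing planes gives
\[
\liminf_{l\to\infty}\frac{e(T_l,B_1)}{E_l}\;\ge\;\frac12\int_{B_1}|Du-\nabla u(0)|^2=\frac12\int_{B_1}|Du|^2\,.
\]
Combining the last two displays with the harmonic decay and the standing hypothesis $e(T_l,B_{1/2})>(2^{-(m+2)}+\theta)\,e(T_l,B_1)$ forces, in the limit, $0\ge\theta\int_{B_1}|Du|^2$, hence $Du\equiv 0$ and $u$ constant, contradicting $\int_{B_1}|Du|^2>0$.

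The step I expect to be the main obstacle is making this reduction rigorous \emph{uniformly in $l$}: keeping track of the many error terms of lower order in $E_l$, handling the regions near $\partial B_1$ where the graphical approximation degrades and the cylindrical and spherical geometries do not match, and — above all — proving cleanly that the optimality of $\pi_0$ at scale $1$ annihilates the average gradient of the limit. These are precisely the points that the estimates in Proposition~\ref{p:approx}, Lemma~\ref{l:taylor}, and the monotonicity formula are designed to control.
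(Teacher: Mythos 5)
Your outline follows the paper's broad strategy (contradiction, blow-up to the harmonic limit $u$ of Proposition~\ref{p:harm}, decay of harmonic functions), but the step you single out as ``crucial'' --- that optimality of $\pi_0$ at scale $1$ forces $\fint_{B_1}Du=Du(0)=0$ --- is precisely what you do not prove, and it is a genuine gap, not a routine verification. The tilt comparison you sketch is controlled by Lemma~\ref{l:taylor} only inside the cylinder where the Lipschitz approximation lives; on $B_1\setminus \cyl_{s_l}$ there is no graphical structure, and there the difference of the excesses with respect to $\vec e_m$ and the tilted plane is of size $|A_l|\bigl(e(T_l,B_1\setminus\cyl_{s_l},\vec e_m)\,\|T_l\|(B_1\setminus\cyl_{s_l})\bigr)^{1/2}+|A_l|^2\,\|T_l\|(B_1\setminus\cyl_{s_l})$, while the gain produced by tilting is only of order $|A_l|^2\sim E_l|\nabla u(0)|^2$. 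Since nothing a priori bounds the annulus excess $e(T_l,B_1\setminus\cyl_{s_l},\vec e_m)$ in terms of $E_l$, the error can swamp the gain. The argument can be rescued, but only after observing that the contradiction hypothesis itself yields $e(T_l,B_1)\leq C\,E_l$ (because $e(T_l,B_{1/2})\leq e(T_l,\cyl_{1/2},\vec e_m)\leq E_l$) and then choosing the cut radius close to $1$; neither observation appears in your proposal. A second, smaller gap: the nontriviality of $u$ does not follow from Remark~\ref{r:energy}, which only gives an \emph{upper} bound on $\int_{B_1}|Du|^2$, and the convergence $u_l\to u$ is only $W^{1,2}_{loc}$, so energy may be lost at $\partial B_1$; positivity must be extracted from the contradiction inequality at scale $1/2$, where the convergence is strong --- this is how the paper gets it.

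The paper's proof shows that your central claim is in fact unnecessary. It tilts only at scale $1/2$: applying Lemma~\ref{l:taylor} with $A_l=\fint_{B_{1/2}}Df_l$, the scale-$1/2$ excess is bounded above in the limit by $\frac12\int_{B_{1/2}}|Du-(Du)_{1/2}|^2$, while at scale $1$ it uses only the trivial bounds $e(T_l,B_1)\geq e(T_l,\cyl_{r_l},\vec e_m)=E_l$ and $\int_{B_1}|Du|^2\geq\int_{B_1}|Du-(Du)_1|^2$. The standard decay $\int_{B_{1/2}}|Du-(Du)_{1/2}|^2\leq 2^{-(m+2)}\int_{B_1}|Du-(Du)_1|^2$ (your spherical-harmonics computation applied to $Du-Du(0)$, noting that averages of a harmonic function over concentric balls all equal $Du(0)$) then forces $\int_{B_1}|Du|^2=0$, contradicting the nontriviality obtained at scale $1/2$. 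So either supply the boundary-annulus estimate and the bound $e(T_l,B_1)\leq CE_l$ to justify $Du(0)=0$, or, more economically, subtract the mean only at the smaller scale as the paper does, and the problematic step disappears.
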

\begin{proof}
The proof is by contradiction. Assume that there exists $\theta>0$ and a sequence of
area-minimizing currents $(T_l)_{l\in\N}$ in $B_1$ satisfying \eqref{e:H-contro} such that
\begin{equation}\label{e:contra}
e(T_l,B_{\frac{1}{2}})> \left(\frac{1}{2^{m+2}}+\theta\right)\,e(T_l,B_1).
\end{equation}
Let $r_l$ be the sequence in Section~\ref{s:harm}. Clearly, it holds that
\begin{equation*}
e(T_l,\cyl_{r_l},\vec e_m)= e(T_l,\cyl_{r_l}\cap B_1,\vec e_m)\leq e(T_l,B_1,\vec
e_m)=e(T_l,B_1).
\end{equation*}
Hence, from \eqref{e:contra},
\begin{align}\label{e:contra2}
e(T_l,\cyl_{\frac{1}{2}},\vec e_m)&\geq e(T_l,\cyl_{\frac{1}{2}})\geq
e(T_l,B_{\frac{1}{2}})>\left(\frac{1}{2^{m+2}}+\theta\right)\,e(T_l,B_1)\notag\\
&\geq \left(\frac{1}{2^{m+2}}+\theta\right)\,e(T_l,\cyl_{r_l},\vec e_m).
\end{align}
Let $E_l,f_l, u_l$ be as in Proposition~\ref{p:harm} and $u:B_1\to\R{n}$ the harmonic
function such that $u_l$ converges to $u$ in $W^{1,2}_{loc}(B_1)$.
Note that \eqref{e:contra2} and Remark~\ref{r:energy} imply that
\[
\int_{B_{\frac{1}{2}}}|Du|^2=\lim_{l\to+\infty}\int_{B_{\frac{1}{2}}}|Du_l|^2
=\lim_{l\to+\infty}\frac{2\,e(T_l,\cyl_{\frac{1}{2}})}{E_l}>
2\,\left(\frac{1}{2^{m+2}}+\theta\right)>0.
\]
In particular, $Du$ is not identically $0$.
Since from Remark~\ref{r:energy} $\int_{B_1}|Du|^2\leq 2\omega_m$, from \eqref{e:contra2} and Lemma~\ref{l:taylor}, we get
\begin{align*}
\int_{B_{\frac{1}{2}}}\frac{|Df_l-A_l|^2}{2}+C\,E_l^{1+\eta}\geq
e(T_l,\cyl_{\frac{1}{2}},\tau_l)\geq e(T_l,\cyl_{\frac{1}{2}})
\geq \left(\frac{1}{2^{m+2}}+\theta\right)\,E_l\,\int_{B_1}\frac{|Du|^2}{2}.
\end{align*}
where $A_l=\fint_{B_{1/2}}Df_l$ and $\tau_l$ is as in \eqref{e:tau}.

Rescaling by $E_l$ and passing to the limit in $l$, for $(Du)_s=\fint_{B_s}Du$,
we get
\begin{equation*}
\int_{B_{\frac{1}{2}}}|Du-(Du)_\frac{1}{2}|^2\geq
\left(\frac{1}{2^{m+2}}+\theta\right)\,\int_{B_1}|Du|^2\geq
\left(\frac{1}{2^{m+2}}+\theta\right)\,\int_{B_1}|Du-(Du)_1|^2,
\end{equation*}
against the decay property of harmonic functions. This gives the contradiction and concludes the proof.
\end{proof}

We conclude with the proofs of Proposition~\ref{p:degiorgi_improved}
and Corollary~\ref{c:decay_everywhere}.

\begin{proof}[Proof of Proposition~\ref{p:degiorgi_improved}]
The proof is now an easy consequence of Proposition~\ref{p:basic iter}.
For every $\delta>0$ choose $\theta>0$ such that
\[
\left(\frac{1}{2}\right)^{2-2\,\delta}=\frac{1}{4}+2^m\,\theta.
\]
Fix $\bar{\eps}$ sufficiently small so that
Proposition \ref{p:basic iter} applies.
Then, if we chose $\eps$ small enough in the hypothesis (H) of Proposition~\ref{p:degiorgi_improved}, recalling
the beginning of Section \ref{s:harm}, we have that
\[
\|T\| (B_{1/2} (p))\leq (\omega_m +\bar{\eps})\,2^{-m}\quad
\text{for every }\,p\in \supp (T)\cap B_{1/2}.
\]
It follows from the monotonicity formula that
$\|T\| (B_r (p))\leq (\omega_m +\bar{\eps})r^m$ for
every $r$.
Therefore, we can apply iteratively (the appropriate
rescaled version of) Proposition~\ref{p:basic iter}, and for $2^{-k-1}<r\leq 2^{-k}$ one obtains
\[
\E(T,B_r (p))\leq C \E(T,B_{\frac{1}{2^k}} (p))\leq C\left(\frac{1}{2^k}\right)^{2-2\,\delta}\E(T,B_{1/2} (p))
\leq C\, \E (T, B_1) \,r^{2-2\,\delta}\, .
\]
\end{proof}

\begin{proof}[Proof of Corollary~\ref{c:decay_everywhere}]
Consider two admissible pairs $(p,\rho,\pi)$ and $(p,2\rho,\pi')$.
Using the monotonicity $\|T\|(B_s(p))\geq \omega_m s^m$ for ever $s>0$,
it follows that
\[
|\vec{\pi} - \vec{\pi}'|^2\leq C\,\rho^{-m}\int_{B_\rho}|\vec{\pi}-\vec T|^2\,d\|T\|
+C\,(2\rho)^{-m}\int_{B_{2\rho}}|\vec{\pi}'-\vec T|^2\,d\|T\|\leq C\,\eps^2_0\,\rho^{2-2\,\delta}.
\]
Hence, for admissible pairs $(p,\rho,\pi)$ and $(p,\rho',\pi')$ with
$\frac{\rho}{2^{k+1}}<\rho'\leq \frac{\rho}{2^k}$, we have
\begin{equation}\label{e:exist tang2}
|\vec{\pi} - \vec{\pi}'|^2\leq C\,\eps^2_0\,\sum_{i=0}^k\left(\frac{\rho}{2^k}\right)^{2-2\,\delta}
\leq C\,\eps^2_0\,\rho^{2-2\,\delta}\sum_{i=0}^k 2^{-(2-2\,\delta)k}\leq C\,\eps^2_0\,\rho^{2-2\,\delta}.
\end{equation}
As already noticed, Proposition~\ref{p:degiorgi_improved} implies that there exists an admissible $\vec \pi_{\rho,p}$ for every $p\in B_{\frac{1}{2}}\cap \supp(T)$ and $\rho\leq\frac{1}{8}$.
Therefore, from \eqref{e:exist tang2} one deduces the existence of
the limit plane $\vec\pi_p=\lim_{\rho\to0}\vec\pi_{\rho,p}$.
Moreover, the same computations imply that, if
$\pi$ is admissible, then $|\vec\pi-\vec\pi_p|\leq C'\,\eps_0\,\rho^{1-\delta}$.
Vice versa, if $|\vec\pi-\vec\pi_p|\leq C''\,\eps_0\,\rho^{1-\delta}$,
then $\pi$ is admissible (the constant $C'$ is possibly larger than $C''$).
Finally, from $\E(T,B_\rho,\vec\pi_p)\leq
C\,\eps_0^2\,\rho^{2-2\,\delta}$,
it follows straightforwardly that $\pi_p$ is the tangent plane to $T$ at $p$,
thus proving (b).

Reasoning as above, for $p,p'\in\supp(T)\cap B_{\frac{1}{2}}$, setting $r=|p-p'|$, we have that
\begin{align*}
|\vec{\pi}_p - \vec{\pi}_{p'}|^2&\leq
\frac{C}{r^m}\int_{B_r(p)\cap B_r(p')}\big(|\vec\pi_p-\vec T|^2+|\vec\pi_{p'}-\vec T|^2\big)\notag\\
&\leq
\frac{C}{r^m}\int_{B_r(p)}|\vec\pi_p-\vec T|^2+\frac{C}{r^m}\int_{B_r(p')}|\vec\pi_{p'}-\vec T|^2\leq C\,\eps^2_0\,r^{2-2\,\delta}.
\end{align*}
It follows that,
if $(p,\rho,\pi)$ and $(p,\rho',\pi')$ are admissible and, to fix
the ideas $\rho'\leq\rho$, then
\begin{equation}\label{e:quasi holder}
|\vec{\pi} - \vec{\pi}'|^2\leq C\,\rho^{2-2\,\delta}+|\vec{\pi}_p - \vec{\pi}'_{p'}|^2
\leq C\,\eps^2_0\,\max\{\rho,\rho',|p-p'|\}^{2-2\,\delta}.
\end{equation}
This is all we need to conclude. Indeed, by the fact
that $\proj_\#(T\res B_{1/2}\cap \cyl_{1/4})=\a{B_{1/4}}$,
it follows that $\supp(T)$ is a graph of a function on $B_{1/4}$, thus giving
(c).
Moreover, from $|\vec\pi_0-\vec\pi_p|^2\leq C\,\eps^2_0\,|p|^{2-2\delta}\leq
C\,\eps^2_0$,
it follows that the Lipschitz constant of this function is bounded by $C\,\eps_0$.
Hence, $|p-p'|\leq C|q-q'|$ for $p,p'\in\supp(T)\cap B_{1/2}\cap\cyl_{1/4}$, where $q=\proj(p)$, $q'=\proj(p')$,
and estimate (a) follows from \eqref{e:quasi holder}.
\end{proof}


\end{document}